\newtheorem{theorem}{Theorem}[section]
\newtheorem{lemma}[theorem]{Lemma}
\newtheorem{proposition}{Proposition}[section]
\theoremstyle{definition}
\let\hide\iffalse
\let\unhide\fi
\DeclareMathOperator*{\esssup}{ess\,sup}
\let\p=\partial
\newcommand{\avv}{\langle v\rangle}
\newcommand{\R}{\mathbb{R}}
\newcommand{\be}{\begin{equation}}
\newcommand{\bm}{\begin{multline}}
\newcommand{\ee}{\end{equation}}
\newcommand{\overbar}[1]{\mkern 1.3mu\overline{\mkern-1.3mu#1\mkern-1.3mu}\mkern 1.3mu}
\newcommand{\Bes}{\begin{eqnarray*}}
	\newcommand{\Ees}{\end{eqnarray*}}
\newcommand{\Be}{\begin{equation}}
\newcommand{\Ee}{\end{equation}}
\def\MR#1{}
\def\p{\partial}
\def\R{\mathbb{R}}
\def\B{\begin{equation}}
\def\E{\end{equation}}
\def\BN{\begin{eqnarray*}}
\def\EN{\end{eqnarray*}}
\theoremstyle{remark}
\numberwithin{equation}{section}
\begin{document}
\title[Large amplitude solution of BGK model: Relaxation to quadratic nonlinearity]{Large amplitude problem of BGK model: Relaxation to quadratic nonlinearity}

\author{Gi-Chan Bae}
\address{Research institute of Mathematics, Seoul National University, Seoul 08826, Republic of Korea }
\email{gcbae02@snu.ac.kr}

\author{Gyounghun Ko}
\address{Department of Mathematics, Pohang University of Science and Technology, South Korea}
\email{gyounghun347@postech.ac.kr}

\author{Donghyun Lee}
\address{Department of Mathematics, Pohang University of Science and Technology, South Korea}
\email{donglee@postech.ac.kr}

\author{Seok-Bae Yun}
\address{Department of mathematics, Sungkyunkwan University, Suwon 16419, Republic of Korea }
\email{sbyun01@skku.edu}



\keywords{}

\begin{abstract}
Bhatnagar–Gross–Krook (BGK) equation is a relaxation model of the Boltzmann equation which is widely used in place of the Boltzmann equation for the simulation of various kinetic flow problems. In this work, we study the asymptotic stability of the BGK model when the initial data is not necessarily close to the global equilibrium pointwisely.
Due to the highly nonlinear structure of the relaxation operator, the argument developed to derive
the bootstrap estimate for the Boltzmann equation leads to a weaker estimate in the case of the BGK model, which does not exclude the possible blow-up of the perturbation. To overcome this issue, we carry out a refined analysis of the macroscopic fields to guarantee that the system transits from a highly nonlinear regime into a quadratic nonlinear regime after a long but finite time, in which the highly nonlinear perturbative term relaxes to essentially quadratic nonlinearity.
\end{abstract}

\maketitle
\tableofcontents
\section{Introduction}

\subsection{BGK model}
The Boltzmann equation is the fundamental equation bridging the particle description and the fluid description of gases \cite{C,CIP,Sone,Sone2}. However, the high dimensionality of the equation and the complicated structure of the collision operator have been major obstacles in applying the Boltzmann equation to various flow problems in kinetic theory. In this regard, a relaxational model equation, which now goes by the name BGK model, was introduced in pursuit of a numerically amenable model of the Boltzmann equation \cite{BGK,Wal}:
\begin{align} \label{BGK}
\begin{split}
	\partial_{t}F + v\cdot\nabla_x F &= \nu(\mathcal{M}(F) - F),\quad (t,x,v) \in \mathbb{R}^+ \times \mathbb{T}^{3}\times \mathbb{R}^3, \cr
F(0,x,v)&=F_0(x,v).
\end{split}
\end{align}
Instead of tracking the complicated collision process using the collision operator of the Boltzmann equation, the BGK model captures the relaxation process by measuring the distance between the velocity distribution to its local equilibrium state:
\begin{align*}
	\mathcal{M}(F)(t,x,v) := \frac{\rho(t,x)}{\sqrt{(2\pi T(t,x))^3}}e^{-\frac{\vert v -U(t,x)\vert^2}{2T(t,x)}}.
\end{align*}
which is called the local Maxwellian. The macroscopic density $\rho$, bulk velocity $U$, and temperature $T$ are defined by:
\begin{align} \label{macro quan}
	\begin{split}
		&\rho(t,x):= \int_{\mathbb{R}^3} F(t,x,v) dv, \\
		&\rho(t,x)U(t,x):= \int_{\mathbb{R}^3} F(t,x,v) v dv, \\
		&3\rho(t,x)T(t,x) := \int_{\mathbb{R}^3} F(t,x,v) \vert v - U(t,x) \vert^2 dv.
	\end{split}
\end{align}
Various forms are available for the collision frequency $\nu$. In this work, we consider the collision frequency of the following form:
\begin{align} \label{collision freq}
	\nu(t,x) = (\rho^aT^b)(t,x),  \quad a\geq b\geq0,
\end{align}
which covers most of the relevant models in the literature.
The relaxation operator satisfies the following cancellation property because $\mathcal{M}$ shares the first three moments with the distribution function:
\begin{align}\label{cancellation}
	\int_{\mathbb{T}^3\times\mathbb{R}^3}\big\{\mathcal{M}(F)- F \big\}\begin{pmatrix} 1\\ v\\ |v|^2 \end{pmatrix} dvdx = 0.
\end{align}
This leads to the conservation laws of mass, momentum, and energy:
\begin{align}\label{conservf}
\frac{d}{dt}\int_{\mathbb{T}^3\times\mathbb{R}^3}F(t,x,v)\begin{pmatrix} 1\\ v\\ |v|^2 \end{pmatrix} dvdx = 0,
\end{align}
and the celebrated H-theorem:
\begin{align*}
\frac{d}{dt}\int_{\mathbb{T}^3\times\mathbb{R}^3}F(t)\ln F(t) dvdx =
\int_{\mathbb{T}^3\times\mathbb{R}^3}\{\mathcal{M}(F) - F\}\{\ln\mathcal{M}(F) - \ln F \}dvdx\leq 0.
\end{align*}
The first mathematical result was obtained in \cite{Perthame}, in which Perthame obtained the global existence of weak solutions when the mass, energy, and entropy of the initial data are bounded. Perthame and Pulvirenti then found in \cite{P-P} that the existence and uniqueness are guaranteed in a class of weighted $L^{\infty}$ norm. This result was relaxed to $L^p$ setting \cite{Z-H}, extended to BGK models in the external field or mean-field \cite{Zhang} and ellipsoidal BGK \cite{Yun2}. The existence and asymptotic behavior of solutions to the BGK model near equilibrium was considered in \cite{Bello,Yun1,Yun22,Yun3}. A stationary solution for the BGK model was found using the Schauder fixed point theorem in \cite{Ukai,Nouri}. The existence and uniqueness of stationary solutions to the BGK model in a slab were investigated in \cite{Bang-Y,Brull-Y}. The argument was extended to a relativistic BGK model \cite{HY} and the quantum BGK model \cite{BGCY}. Various macroscopic limits such as the hydrodynamic limit problem and diffusion limit can be found in \cite{DMOS,M-M-M,Mellet,SR1,SR2}. For numerical studies on the BGK models, see \cite{F-J,F-R,Issau,M,M-S,RSY,RY} and references therein.\\

In this paper, we consider $L^{\infty}_{x,v}$ solution of the BGK equation. Low regularity $L^{\infty}$ solution via $L^{2}$-$L^{\infty}$ bootstrap argument was developed by Guo \cite{Guo10} to solve the Boltzmann equation with several boundary conditions. The approach was widely used and extended to solve various problems in more general boundaries and to get regularity results. We refer to \cite{KimLee, cylinder, CKLVPB, GKTT2017, GKTT2, Kim2011, CK1,KLKRM} and references therein. In these works, however, sufficiently small initial (weighted) $L^{\infty}_{x,v}$ data had to be imposed to obtain global well-posedness and convergence to equilibrium.  \\
\indent Restriction to small $L^{\infty}_{x,v}$ initial data for the Boltzmann equation was removed by Duan et al \cite{DHWY2017} by imposing small relative entropy and $L^p$ type smallness for initial data in \cite{DHWY2017, DW2019}. This type of problem is usually called large amplitude problem, because it allows initial data to be far from the global equilibrium pointwisely. This argument has been further developed into several boundary condition problems and polynomial tail in large velocity. See \cite{DW2019, KLP2022, DKL2020, CaoJFA}. \\
\indent Meanwhile, to the best of the author's knowledge, there has been only one result regarding the large amplitude problem of the BGK model \cite{DWY2017}. Due to the strong nonlinearity of the relaxation operator of the BGK model, the authors introduced an additional condition which means that the initial data remains close to global Maxwellian $\mu$ in a weighted $L^{1}$ norm along the characteristic, as well as small relative entropy. Moreover, the asymptotic behavior was not obtained in \cite{DWY2017}.  \\
\indent In this paper, we remove the additional initial condition imposed in \cite{DWY2017} by performing some refined controls of macroscopic fields which guarantee that the system transits into the quadratic nonlinear regime where we can use bootstrap argument to prove the global existence and convergence to the equilibrium. \\

\subsection{Main theorem and scheme of proof}
Let us write $F = \mu + f$ where $\mu=\mu(v)$ is the global equilibrium:
\begin{align*}
\mu(v)=\frac{1}{\sqrt{(2\pi)^3}}e^{-\frac{|v|^2}{2}},
\end{align*}
and $f$ denotes the perturbation around the equilibrium. In terms of $f$, BGK equation \eqref{BGK} can be rewritten as
\Be \label{expan}
\begin{split}
	\p_{t}f + v\cdot\nabla_x f  &= Lf + \Gamma(f),
\end{split}
\Ee
where $L$ denotes the linearized relaxation operator and $\Gamma$ is nonlinear perturbation.
For the derivation \eqref{expan} and explicit form of $L$ and $\Gamma$, see Lemma \ref{linearize}.
\hide
where characteristic becomes
\[
X(s;t,x,v) = x - v(t-x), \quad V(s;t,x,v) = v. \\
\]
\unhide

To state our main theorem, we need to define relative entropy:
\begin{align}\label{relaentp}
\mathcal{E}(F)(t)=\int_{\mathbb{T}^3\times\mathbb{R}^3} (F\ln F -\mu \ln \mu) dvdx= \int_{\mathbb{T}^3\times\mathbb{R}^3} F\ln \frac{F}{\mu} dvdx,
\end{align}
where the last equality comes from \eqref{conservf}.
We also introduce some necessary notations:
\begin{itemize}
\item We define $q$-th order velocity weight as $\langle v \rangle^q:=1+|v|^q$.
\item We define the standard $L^\infty$ norm
\begin{align*}
\|F(t)\|_{L^{\infty}_{x,v}} := \esssup_{(x,v)\in\mathbb{T}^3\times\mathbb{R}^3} |F(t,x,v)|,
\end{align*}
and weighted $L^{\infty}$ norm as
\begin{align*}
\|F(t)\|_{L^{\infty,q}_{x,v}} := \esssup_{(x,v)\in\mathbb{T}^3\times\mathbb{R}^3} \langle v \rangle^q|F(t,x,v)| , \quad \|F(t)\|_{L^{\infty}_{x,v}(m)} := \esssup_{(x,v)\in\mathbb{T}^3\times\mathbb{R}^3} m(v)|F(t,x,v)| .
\end{align*}
\item We denote standard $L^2$ norm
\begin{align*}
\|F\|_{L^{2}_v} := \left(\int_{\mathbb{R}^3} |F(v)|^2 dv \right)^{\frac{1}{2}}, \quad \|F\|_{L^{2}_{x,v}} := \left(\int_{\mathbb{T}^3\times \mathbb{R}^3} |F(x,v)|^2 dvdx \right)^{\frac{1}{2}},
\end{align*}
and weighted $L^2$ norm as
\begin{align*}
\|F\|_{L^{2}_v(m)} := \left(\int_{\mathbb{R}^3} |m(v)F(v)|^2 dv \right)^{\frac{1}{2}}, \quad \|F\|_{L^{2}_{x,v}(m)} := \left(\int_{\mathbb{T}^3\times \mathbb{R}^3} |m(v)F(x,v)|^2 dvdx \right)^{\frac{1}{2}}.
\end{align*}
\item We define the pairing $\langle \cdot, \cdot \rangle_v$ as
\begin{align}
\langle g, h \rangle_v:= \int_{\mathbb{R}^3} g(v)h(v) dv, \quad \text{ if }~ gh \in L^1(\R^3_{v}). \label{bracket}
\end{align}
\end{itemize}

We are now ready to state our main theorem.
\begin{theorem}\label{maintheorem} Let $F_0$ is non-negative: $F_0(x,v)=\mu(v)+f_0(x,v)\geq 0$, and satisfies
\begin{align} \label{initial LB}
\inf_{(t,x) \in [0,\infty) \times \mathbb{T}^3}\int_{\mathbb{R}^3} F_0(x-vt,v)dv \geq C_0,
\end{align}
for some positive constant $C_0>0$. We also assume $F_0$ shares the same mass, momentum, and energy with $\mu$:
\begin{align}	\label{normal}
\int_{\mathbb{T}^{3}\times \R^{3}} f_0
(1 ,  v , |v|^{2})dvdx = 0.
\end{align}
Then, for any $M_0>0$, there exists $\varepsilon = \varepsilon(M_0, C_0)$ such that if initial data $f_0$ satisfies $(q > 10)$
\begin{align*}
\Vert f_0 \Vert_{L^{\infty,q}_{x,v}} \leq M_0, \quad \mathcal{E}(F_0) \leq \varepsilon,
\end{align*}
then there exists a unique global-in-time solution $F(t,x,v)=\mu(v) + f(t,x,v)$ to the BGK model \eqref{expan} with collision frequency \eqref{collision freq}. Moreover $f$ satisfies
\begin{align*}
\Vert  f(t) \Vert_{L^{\infty,q}_{x,v}} \leq C_{M_0} e^{-kt},
\end{align*}
where $C_{M_0}$ depending on $M_0$, and $k$ are positive constants.
\end{theorem}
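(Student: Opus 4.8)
The plan is to set up the standard $L^2$–$L^\infty$ bootstrap, but to interleave it with a careful bookkeeping of the macroscopic fields $\rho, U, T$ so that the highly nonlinear term $\Gamma(f)$ can be controlled. First I would use the dissipation of relative entropy: since $\mathcal E(F_0)\le\varepsilon$ and $\mathcal E(F)(t)$ is nonincreasing, the entropy stays small for all time, and by the Csiszár–Kullback–Pinsker inequality this pins down $\rho-1$, $U$, $T-1$ in a weak (say $L^1_x$ or $L^2_x$ with a defect) sense by $\sqrt{\varepsilon}$, at least on the set where $\rho$ is not degenerate. The lower bound \eqref{initial LB} propagates along characteristics to give a \emph{pointwise lower bound} $\rho(t,x)\ge c_0>0$ and an upper bound on $T$, so the local Maxwellian $\mathcal M(F)$ and the collision frequency $\nu=\rho^aT^b$ never degenerate; this is what makes the nonlinearity $\Gamma(f)$ amenable. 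The short-time / continuation theory in weighted $L^\infty_{x,v}$ (à la Perthame–Pulvirenti) gives a local solution with $\|\langle v\rangle^q f(t)\|_{L^\infty_{x,v}}$ finite, so the whole game is an \emph{a priori} bound on this quantity that is uniform in time and eventually decaying.

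The core difficulty, flagged in the abstract, is that the naive Duhamel estimate for $\|\langle v\rangle^q f\|_\infty$ closes only to a \emph{weak} bound (the relaxation operator's nonlinearity produces a term like $\|f\|_\infty \cdot(\text{macroscopic quantities})$ with a constant that is not small, only the macroscopic part is small), so one cannot immediately rule out growth. The remedy I would implement is a two-timescale argument: \emph{(i)} on $[0,T_*]$ for a large but finite $T_*=T_*(M_0,C_0)$, prove a crude but finite bound $\|\langle v\rangle^q f(t)\|_\infty \le \Lambda(M_0)$ using Gronwall on the Duhamel representation along characteristics — here the loss of smallness is acceptable because $T_*$ is finite; \emph{(ii)} show that by time $T_*$ the solution has entered a "quadratic regime," meaning the macroscopic fields have relaxed enough — via the entropy dissipation integrated over $[0,T_*]$ together with the finite $L^\infty$ control converting weak smallness into genuine smallness of $\rho-1,U,T-1$ in $L^\infty_x$ — that $\|f(T_*)\|$ in the relevant norm, and more importantly the effective Lipschitz constant of $\Gamma$ near the current state, is small. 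Concretely I would expand $\mathcal M(F)-\mu$ around $F=\mu$ and show the remainder past the linear term is genuinely quadratic in $f$ once $\rho-1,U,T-1$ are small in $L^\infty_x$, i.e. $\|\Gamma(f)\|_{L^{\infty,q}_{x,v}} \lesssim \big(\|f\|_{L^{\infty,q-\text{something}}_{x,v}} + \text{macro}\big)\|f\|_{L^{\infty,q}_{x,v}}$ with a small prefactor.

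After time $T_*$ I would run the genuine bootstrap: the linearized operator $L$ has a spectral gap on $(\mathbf I-\mathbf P)f$ and, combined with the conservation laws \eqref{normal} killing the hydrodynamic part's zero modes, one gets exponential decay $\|f(t)\|_{L^2_{x,v}}\lesssim e^{-\lambda(t-T_*)}\|f(T_*)\|_{L^2_{x,v}}$ for the $L^2$ energy, provided the quadratic term is controlled by $\sup_t\|\langle v\rangle^q f(t)\|_\infty$ times something small. Then the $L^\infty$ estimate closes by Duhamel along characteristics: $\langle v\rangle^q f(t)$ is bounded by the (decaying) $L^2$ norm via the smoothing in the $\mathbf P f$ part plus a contraction from the small-prefactor nonlinearity, yielding $\|\langle v\rangle^q f(t)\|_\infty \le C_{M_0} e^{-k(t-T_*)}$, which absorbed into the constant gives the stated $C_{M_0}e^{-kt}$. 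Uniqueness follows from the same Lipschitz/contraction estimate for $\Gamma$ applied to the difference of two solutions. I expect step \emph{(ii)} — rigorously quantifying "the system has transited to the quadratic regime," i.e. turning the time-integrated entropy dissipation plus the finite-time $L^\infty$ bound into pointwise-in-$x$ smallness of the macroscopic fields at $t=T_*$ (and its persistence) — to be the main obstacle, since the entropy only controls the fields weakly and one must interpolate against the $L^\infty$ bound without losing the exponential decay.
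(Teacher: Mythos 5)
Your overall architecture (entropy smallness $+$ lower bound on $\rho$ $+$ a finite ``highly nonlinear'' phase followed by a quadratic/small-amplitude phase with an $L^2$--$L^\infty$ closing argument) matches the paper's three-regime strategy, but there is a genuine gap at the step that the paper identifies as the core difficulty: your step \emph{(i)}, the ``crude but finite bound $\Vert \langle v\rangle^q f(t)\Vert_\infty\le\Lambda(M_0)$ on $[0,T_*]$ by Gronwall,'' does not go through. Since $\nu=\rho^aT^b\lesssim \sup_s\Vert F(s)\Vert_{L^{\infty,q}_{x,v}}^a$ and $\langle v\rangle^q\mathcal M(F)\lesssim \Vert F\Vert_{L^{\infty,q}_{x,v}}$, the naive Duhamel/Gronwall inequality is of the form $y(t)\le M_0+C\int_0^t y(s)^{a+1}ds$ (and the rational coefficients $\mathcal{A}_{ij}$, with negative powers of $\rho_\theta,T_\theta$, only make this worse). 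For $a>0$ this is superlinear, so Gronwall alone guarantees control only up to a time of order $M_0^{-a}$, whereas the transition time you need is large (of order $\ln M_0$ and, for the closing of the argument, $t_*\sim\ln\overbar M$ with $\overbar M$ exponentially large in $M_0$). So ``$T_*$ is finite'' is not enough; the possible blow-up of the bound is exactly what must be excluded, and it cannot be excluded without already injecting the entropy smallness into the $L^\infty$ estimate during the highly nonlinear phase.

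The paper resolves this by a bootstrap whose logical order differs from yours: one \emph{assumes} $\sup_{0\le s\le t_*}\Vert f(s)\Vert_{L^{\infty,q}_{x,v}}\le\overbar M$, fixes $\overbar M$ and $t_*$ in terms of $M_0,\delta$ only, and then chooses the entropy threshold $\varepsilon=\varepsilon(\overbar M,t_*,\delta)$ \emph{last}. Under the a priori assumption, the Duhamel formula for $\int\langle v\rangle^2 f\,dv$ is split in velocity and in time, and the crucial trick is the change of variables $y=x-v(t-s)$ turning velocity integrals into spatial integrals so that the pointwise-in-time entropy bound (Lemma \ref{L2 control}, from the H-theorem) supplies the smallness; this yields Proposition \ref{macrodecay}, hence uniform closeness of $(\rho,U,T)$ to $(1,0,1)$ after $t_{eq}(M_0,\delta)$, and bounds on the macroscopic fields before $t_{eq}$ that depend only on $M_0$. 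Feeding this back through a second Duhamel iteration produces a bootstrap inequality that is \emph{linear} in $\int_0^t\Vert f(s)\Vert_{L^{\infty,q}_{x,v}}ds$ plus a small remainder $D$, which Gronwall then closes with the announced choices of $\overbar M$ and $t_*$. Your proposal uses the entropy only to certify smallness of the macroscopic fields at time $T_*$ (via CKP and integrated dissipation), which leaves the pre-$T_*$ $L^\infty$ bound unsupported; also note that the pointwise lower bound on $\rho$ obtained from \eqref{initial LB} is not uniform in time in the large-amplitude phase (it degrades like $e^{-\nu^{max}_{f_0}t}$), so it cannot be invoked as a time-independent nondegeneracy constant there. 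The post-$t_*$ part of your plan (spectral gap / semigroup decay plus a Lipschitz estimate for $\Gamma$ in the small-amplitude regime) is consistent with the paper's Section 6.
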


In previous results on the BGK model near the global equilibrium \cite{Bello,Yun1,Yun22,Yun3}, the fact that the macroscopic fields $(\rho, U, T)$ remains close to those of global equilibrium $(1,0,1)$ was crucially used   to close energy estimates and derive asymptotic stability. Even for the large amplitude setting, developed in \cite{DWY2017}, conditions that corresponds more or less to  the statement that the initial macroscopic fields lie close to the global equilibrium has to be imposed to control nonlinear terms and derive bootstrap estimates. \\
\begin{figure}
\includegraphics[width=15cm]{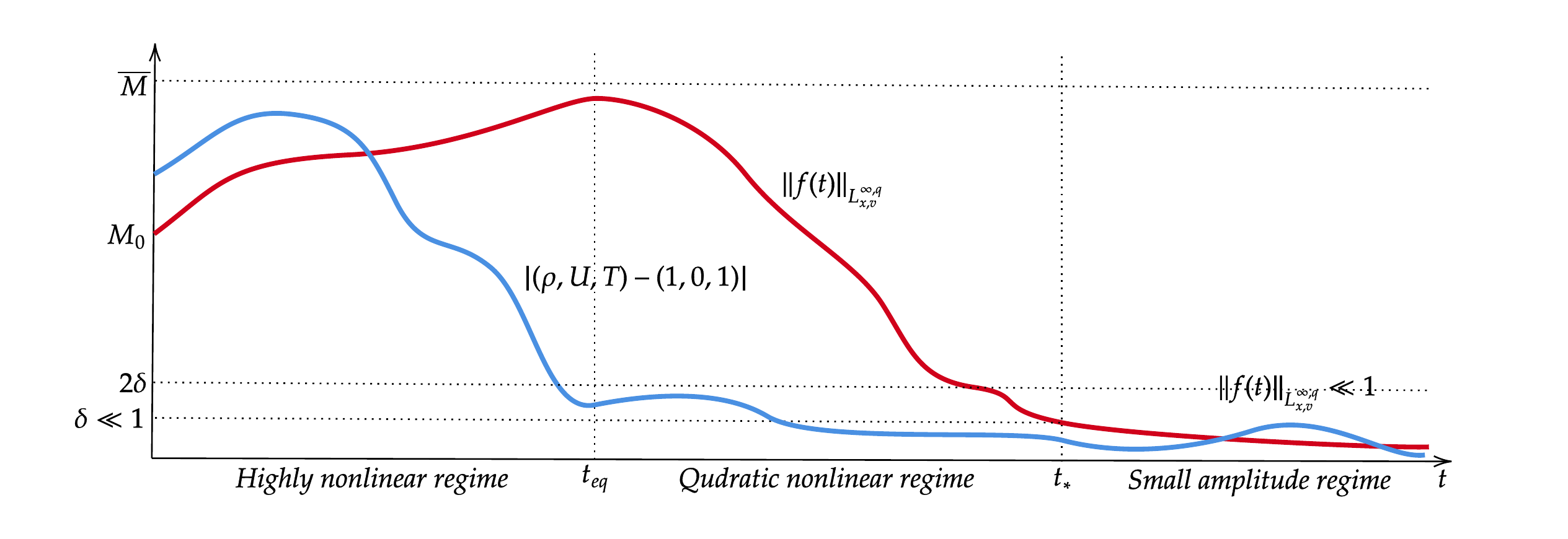}
\caption{Three different regimes}
\label{fig}
\end{figure}
\indent To overcome this restriction, the relaxation of macroscopic fields into the equilibrium macroscopic fields $(1,0,1)$ has to be carefully investigated. In this regard, we divide the evolution of the solution into three different phases, namely, highly nonlinear regime, quadratic nonlinear regime, and small amplitude regime (See Figure \ref{fig}). In the highly nonlinear regime, the amplitude of the perturbation can be arbitrarily large and its macroscopic fields are not necessarily close to $(1,0,1)$. We show that macroscopic fields relax to $(1,0,1)$ uniformly after a time $t_{eq}$ which is the onset of the quadratic nonlinear regime. This enables us to carry out the crucial bootstrap argument. After a sufficiently large time $t_{*}$, the solution enters the near-equilibrium regime (small amplitude regime) for which various existence theories are available. \\
\indent The major difficulties arise in the first two regimes in Figure \ref{fig} in which the amplitude of solutions is not necessarily small. In the study of the large-amplitude solution to the Boltzmann equation \cite{DHWY2017,DW2019,DKL2020,KLP2022}, the fact that the perturbation term is only quadratically nonlinear was crucially used in the derivation of the following key bootstrap estimate:
 \begin{align*}
 	\Vert  f(t) \Vert_{L^{\infty,q}_{x,v}} \lesssim C(\Vert  f_0 \Vert_{L^{\infty,q}_{x,v}})\left(1+\int_0^t \Vert f(s)\Vert_{L^{\infty,q}_{x,v}}ds \right)e^{-\lambda t} +  D, \quad D \ll 1.
 \end{align*}
In the case of the BGK model, the typical term in the nonlinear perturbation $\Gamma(f)$ looks like  
\begin{align} \label{NL terms}
	\sum_{1\leq i,j\leq 5} \mathcal{A}_{ij}(\rho,(v-U),U,T)\mathcal{M}(F)  \int_{\mathbb{R}^3}f e_i dv \int_{\mathbb{R}^3}f e_jdv,
\end{align}
where $\mathcal{A}_{ij}$ ($1\leq i,j \leq 5$) denote generic rational functions and $e_i ~(i=1,\dots,5)$ are the orthogonal basis of the null space of $L$, and suffers from much stronger nonlinearity than the quadratic nonlinearity of the Boltzmann equation.
In the presence of such strong nonlinearity, a naive computation would lead to the following estimate:
\begin{align*}
 	\Vert  f(t) \Vert_{L^{\infty,q}_{x,v}} \lesssim C(\Vert  f_0 \Vert_{L^{\infty,q}_{x,v}})\left(1+\int_0^t \left[ \Vert f(s)\Vert_{L^{\infty,q}_{x,v}}\right]^nds \right)e^{-\lambda t} +  D,
 \end{align*}
where exponent $n$ is determined by the order of nonlinearity of $\Gamma$. Unfortunately, this does not exclude the possibility of a blow-up of $\Vert  f(t)\Vert_{L^{\infty,q}_{x,v}}$ and, therefore, cannot be applied to bootstrap arguments.
To overcome this difficulty, we first note that the strong nonlinearity of \eqref{NL terms} comes from the nonlinearity of $\mathcal{A}_{ij}$. We will perform careful asymptotic analysis of $\rho,U,T$ to show that after some finite time $t_{eq}$ the nonlinearity of $\mathcal{A}_{ij}$ essentially vanishes so that \eqref{NL terms} become essential quadratically nonlinear.
This enables one to derive the desired bootstrap inequality with $n=1$ in the quadratic nonlinear regime.

This paper is organized as follows. In Section 2, we consider the linearization of the BGK model and derive basic estimates for the macroscopic fields. In Section 3, under a priori assumption, we prove the key estimate to control the macroscopic fields. Especially, we obtain the transition time $t_{eq}$ after which the solution enters the quadratic nonlinear regime. In Section 4, we prove the local-in-time existence and uniqueness of the BGK solutions. In Section 5, we show that the solution satisfies the desired bootstrap inequality in the quadratic nonlinear regime. In Section 6, we prove the well-posedness and exponential decay of the solution to the BGK equation in the small amplitude regime. In Appendix \ref{Nonlincomp}, we present the explicit form of the nonlinear perturbation $\Gamma$.\\

\section{Linearization and basic estimates}
In this section, we recall the linearization of the BGK model \eqref{BGK}, basic estimates for macroscopic fields. 

\begin{lemma}\emph{\cite{BY2023,Yun1}} \label{linearize} Let $F=\mu+f$. Then the BGK model \eqref{BGK} can be rewritten in terms of $f$ as follows:
\begin{align}\label{reform BGK}
\p_{t}f + v\cdot\nabla_x f  + f &= \mathbf{P}f + \Gamma(f).
\end{align}
The linear term $\mathbf{P}f$ is defined as
\begin{align}\label{pf}
\mathbf{P}f&=\int_{\mathbb{R}^3}fdv \mu+\int_{\mathbb{R}^3}fvdv\cdot (v\mu) +\int_{\mathbb{R}^3}f\frac{|v|^2-3}{\sqrt{6}}dv\left(\frac{|v|^2-3}{\sqrt{6}}\mu\right)=\sum_{i=1}^5 \langle f,e_i\rangle_v (e_i \mu),
\end{align}
where we used the pairing notation $\langle \cdot , \cdot \rangle_v$ in \eqref{bracket} and $(e_1,\cdots,e_5)=(1,v,(\vert v \vert^2-3)/\sqrt{6})$. The nonlinear term $\Gamma(f)$ is written as
\begin{align} \label{def gamma}
\Gamma(f) &= \Gamma_1(f) + \Gamma_2(f).
\end{align}
Here, 
\begin{align*}
\Gamma_1(f) &= (\mathbf{P}f-f) \sum_{1\leq i \leq 5}\int_0^1 A_i(\theta) d\theta \langle f,e_i \rangle_{v},
\end{align*}
(precise definition of $A_{i}(\theta)$ is given in \eqref{A12}) and
\begin{align}\label{polyform}
\begin{split}
\Gamma_2(f) &= \rho^a T^b\sum_{1\leq i,j\leq 5}\int_0^1 \left[\nabla_{(\rho_{\theta},\rho_{\theta} U_{\theta}, G_{\theta})}^2 \mathcal{M}(\theta)\right]_{ij}  (1-\theta) d\theta \langle f,e_i \rangle_{v} \langle f,e_j \rangle_{v} \cr
&= \rho^a T^b\sum_{1\leq i,j\leq 5}\int_0^1 \frac{\mathcal{P}_{ij}((v-U_{\theta}),U_{\theta},T_{\theta})}{\rho_{\theta}^{\alpha_{ij}}T_{\theta}^{\beta_{ij}}}\mathcal{M}(\theta)  (1-\theta) d\theta \int_{\mathbb{R}^3}f e_i dv \int_{\mathbb{R}^3}f e_jdv, 
\end{split}
\end{align}
where the transition of the macroscopic fields are defined as
\begin{align}\label{transition}
\rho_{\theta} = \theta \rho + (1-\theta), \qquad
\rho_{\theta} U_{\theta} = \theta \rho U, \qquad
\rho_{\theta}|U_{\theta}|^2+3\rho_{\theta} T_{\theta} - 3\rho_{\theta} = \theta (\rho|U|^2+3\rho T - 3\rho).
\end{align}
Here, $\mathcal{P}_{ij}$ denotes a generic polynomial such that $\mathcal{P}_{ij}(x_1,\cdots,x_n)=\sum_m a^{ij}_m x_1^{m_1}\cdots x_n^{m_n}$ and $\alpha_{ij},\beta_{ij} \geq 0$.
Precise definitions of $\alpha_{ij},\beta_{ij}$, and $\mathcal{P}_{ij}$ are given in Appendix \ref{Nonlincomp}.
\end{lemma}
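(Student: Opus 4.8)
The plan is to substitute $F = \mu + f$ into \eqref{BGK} and Taylor-expand the local Maxwellian $\mathcal{M}(F)$ in its macroscopic parameters around the global equilibrium values $(1,0,1)$, keeping track of the first-order (linear) and second-order (remainder) terms. First I would record that the macroscopic fields split as $\rho = 1 + \langle f, 1\rangle_v$, $\rho U = \langle f, v\rangle_v$, and, using the energy relation in \eqref{macro quan}, $\rho|U|^2 + 3\rho T - 3\rho = \langle f, |v|^2 - 3\rangle_v$; more conveniently in the orthonormalized basis, $\langle f, e_i\rangle_v$ for $i = 1,\dots,5$ are the natural coordinates. Parametrize the line segment from the global to the local state by $\theta \in [0,1]$ via \eqref{transition}, so that $\mathcal{M}(\theta)$ interpolates between $\mu$ (at $\theta = 0$) and $\mathcal{M}(F)$ (at $\theta = 1$). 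Then the exact Taylor formula with integral remainder gives
\begin{align*}
\mathcal{M}(F) = \mu + \sum_{i=1}^5 \langle f, e_i\rangle_v \,(e_i \mu) + \sum_{1\le i,j\le 5} \langle f, e_i\rangle_v \langle f, e_j\rangle_v \int_0^1 \big[\nabla^2 \mathcal{M}(\theta)\big]_{ij}(1-\theta)\,d\theta,
\end{align*}
where the first-order term is identified with $\mathbf{P}f$ after a direct check that $\partial_{(\rho_\theta, \rho_\theta U_\theta, G_\theta)}\mathcal{M}\big|_{\theta=0}$ produces exactly the projection onto $\mathrm{span}\{e_i\mu\}$; this is the standard computation that the linearization of the Maxwellian about equilibrium is the $L^2(\mu^{-1})$-orthogonal projection onto the hydrodynamic modes.

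Next I would handle the collision-frequency factor. Writing $\nu(t,x) = \rho^a T^b$ and the relaxation term as $\nu(\mathcal{M}(F) - F)$, I expand $\nu(\mathcal{M}(F) - F) = (\mathcal{M}(F) - F) + (\rho^a T^b - 1)(\mathcal{M}(F) - F)$. Since $\mathcal{M}(F) - F = \mathbf{P}f - f + (\text{quadratic})$ from the step above, the leading piece $(\mathcal{M}(F) - F)$ contributes $\mathbf{P}f - f$ plus quadratic terms to be absorbed into $\Gamma_2$, and moving $-f$ to the left-hand side yields the stated form $\partial_t f + v\cdot\nabla_x f + f = \mathbf{P}f + \Gamma(f)$. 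The factor $\rho^a T^b - 1$ is itself $O(\langle f, e_i\rangle_v)$, and I would express it via $\rho^a T^b - 1 = \sum_i \int_0^1 A_i(\theta)\,d\theta\,\langle f, e_i\rangle_v$ by the fundamental theorem of calculus along the same $\theta$-path (this defines $A_i(\theta)$ as referenced in \eqref{A12}); multiplying by $(\mathbf{P}f - f)$ gives precisely $\Gamma_1(f)$ up to the cubic cross-terms, which one again folds into the rational-function coefficients. Finally, the second expression for $\Gamma_2$ in \eqref{polyform} follows by differentiating the explicit Gaussian $\mathcal{M}(\theta) = \rho_\theta (2\pi T_\theta)^{-3/2} e^{-|v-U_\theta|^2/(2T_\theta)}$ twice: each $\rho_\theta$- or $T_\theta$-derivative lowers a power of $\rho_\theta$ or $T_\theta$ and brings down a polynomial in $(v - U_\theta, U_\theta, T_\theta)$ times $\mathcal{M}(\theta)$, which is exactly the structure $\mathcal{P}_{ij}/(\rho_\theta^{\alpha_{ij}} T_\theta^{\beta_{ij}})\,\mathcal{M}(\theta)$; bookkeeping of the exponents $\alpha_{ij}, \beta_{ij}$ and the polynomials $\mathcal{P}_{ij}$ is deferred to Appendix \ref{Nonlincomp}.

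The main obstacle is not any deep estimate but the careful chain-rule bookkeeping: one must change variables correctly between the coordinates $(\rho_\theta, \rho_\theta U_\theta, G_\theta)$ in which the affine path \eqref{transition} is linear and the coordinates $(\rho_\theta, U_\theta, T_\theta)$ in which $\mathcal{M}(\theta)$ has its explicit Gaussian form, since the Hessian $\nabla^2_{(\rho_\theta, \rho_\theta U_\theta, G_\theta)}\mathcal{M}(\theta)$ involves the Jacobian of this nonlinear change of variables. Getting the powers of $\rho_\theta$ in the denominators right — and verifying that all denominators are genuine powers of $\rho_\theta$ and $T_\theta$ with no other singularities — is the delicate point, and is precisely why the precise definitions are relegated to the appendix. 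I would also double-check that no quadratic contribution is lost when splitting $\Gamma = \Gamma_1 + \Gamma_2$: the cleanest way is to define $\Gamma(f) := \nu(\mathcal{M}(F) - F) - (\mathbf{P}f - f)$ as a whole and then verify that it decomposes as claimed, rather than building it piece by piece.
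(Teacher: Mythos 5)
Your proposal is correct and follows essentially the same route as the paper: Taylor expansion of $\mathcal{M}(\theta)$ with integral remainder along the affine path \eqref{transition} in the conserved coordinates $(\rho_{\theta},\rho_{\theta}U_{\theta},G_{\theta})$, chain rule via the Jacobian to identify the first-order term with $\mathbf{P}f$, and the fundamental theorem of calculus for $\nu(\theta)=\rho_{\theta}^{a}T_{\theta}^{b}$ producing the $A_i(\theta)$, with the explicit Hessian computation deferred to the appendix. The only bookkeeping point to tighten is that the cubic piece $(\nu-1)\times(\text{Taylor remainder})$ is not ``folded into the rational-function coefficients'' but simply sits inside $\Gamma_2$, which carries the full prefactor $\rho^{a}T^{b}$ rather than $1$, exactly as your suggested clean definition $\Gamma(f):=\nu(\mathcal{M}(F)-F)-(\mathbf{P}f-f)$ makes automatic.
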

\begin{proof}
The linearization of the local Maxwellian $\mathcal{M}(F)$ and $\nu(t,x)$ can be found in \cite{Yun1} and \cite{BY2023}, respectively. But for the reader's convenience, we briefly sketch the proof here. The main idea of linearization is constructing a convex combination of the following macroscopic fields:
\begin{align}\label{rhoUG}
\rho = \int_{\mathbb{R}^3} \mu+f dv, \quad
\rho U = \int_{\mathbb{R}^3} vf dv, \quad
G = \frac{\rho|U|^2+3\rho T - 3\rho}{\sqrt{6}}=\int_{\mathbb{R}^3} \frac{|v|^2-3}{\sqrt{6}}f dv.
\end{align}
We note that the mapping of the macroscopic fields $(\rho,U,T)\leftrightarrow (\rho,\rho U,G)$ is one to one if $\rho>0$ because of the following reverse relation:
\begin{align*}
U = \frac{\rho U}{\rho},\qquad T 
= \sqrt{\frac{2}{3}}\frac{G}{\rho} - \frac{|\rho U|^2}{3\rho^2}+1.
\end{align*}
Using the transition of the macroscopic fields \eqref{transition}, we write the local Maxwellian depending on $(\rho_{\theta},U_{\theta},T_{\theta})$ as $\mathcal{M}(\theta)$ and we apply Taylor's theorem at $\theta=0$.
\begin{align*}
\mathcal{M}(1)= \mathcal{M}(0) + \frac{d \mathcal{M}(\theta)}{d\theta}\bigg|_{\theta=0} + \int_0^1 \frac{d^2 \mathcal{M}(\theta)}{d\theta^2}(1-\theta)d\theta .
\end{align*}
Since $(\rho_{\theta},U_{\theta},T_{\theta})|_{\theta=1}=(\rho,U,T)$ and $(\rho_{\theta},U_{\theta},T_{\theta})|_{\theta=0}=(1,0,1)$, we have $\mathcal{M}(1)=\mathcal{M}(F)$ and $\mathcal{M}(0)=\mu$, respectively. Then we consider the first derivative of $\mathcal{M}(\theta)$:
\begin{align*}
\frac{d \mathcal{M}(\theta)}{d\theta}\bigg|_{\theta=0} &= \frac{d \rho_{\theta}}{d \theta}\frac{\partial  \mathcal{M}(\theta)}{\partial \rho_{\theta}}+\frac{d(\rho_{\theta}U_{\theta})}{d\theta}\frac{\partial \mathcal{M}(\theta)}{\partial (\rho_{\theta}U_{\theta})}+\frac{dG_{\theta}}{d\theta}\frac{\partial \mathcal{M}(\theta)}{\partial G_{\theta}} \cr
&= \left(\frac{d (\rho_{\theta}, \rho_{\theta} U_{\theta}, G_{\theta})}{d \theta}\right)^{T} \left( \frac{\partial(\rho_{\theta},\rho_{\theta} U_{\theta},G_{\theta})} {\partial(\rho_{\theta},U_{\theta},T_{\theta})} \right)^{-1}\nabla_{(\rho_{\theta},U_{\theta},T_{\theta})}\mathcal{M}(\theta)\bigg|_{\theta=0},
\end{align*}
where we used that the last definition of \eqref{transition} is equivalent to $G_{\theta}=\theta G$. Then substituting the computation of the Jacobian and $\nabla_{(\rho_{\theta},U_{\theta},T_{\theta})}\mathcal{M}(\theta)$ in Lemma \ref{Jaco} at $\theta=0$ with
\begin{align*}
\left(\frac{d (\rho_{\theta}, \rho_{\theta} U_{\theta}, G_{\theta})}{d \theta}\right) = \left(\int_{\mathbb{R}^3} f dv, \int_{\mathbb{R}^3} vf dv, \int_{\mathbb{R}^3} \frac{|v|^2-3}{\sqrt{6}}f dv\right),
\end{align*}
we obtain that
\begin{align*}
\frac{d \mathcal{M}(\theta)}{d\theta}\bigg|_{\theta=0}= \mathbf{P}f.
\end{align*}
For the nonlinear term, applying the chain rule twice yields
\begin{align*}
\frac{d^2\mathcal{M}(\theta)}{d\theta^2}
&=(\rho-1, \rho U, G)^T\left\{D^2_{(\rho_{\theta},\rho_{\theta} U_{\theta},G_{\theta})}\mathcal{M}(\theta)\right\}(\rho-1, \rho U, G) \cr
&= \left[\nabla_{(\rho_{\theta},\rho_{\theta} U_{\theta}, G_{\theta})}^2 \mathcal{M}(\theta)\right]_{ij} \langle f,e_i \rangle_{v} \langle f,e_j \rangle_{v}.
\end{align*}
The explicit form of $\nabla_{(\rho_{\theta},\rho_{\theta} U_{\theta}, G_{\theta})}^2 \mathcal{M}(\theta)$ will be given in Appendix \ref{Nonlincomp}. Now we consider the collision frequency $\nu = \rho^aT^b$. We define $\nu(\theta)= \rho_{\theta}^a T_{\theta}^b$, and by Taylor's theorem, we have
\begin{align*}
\nu(1) = \nu(0) + \int_0^1 \frac{d}{d\theta}\nu(\theta) d\theta.
\end{align*}
By an explicit computation, we have
\begin{align*}
\nu(t,x) 
&= 1+ \sum_{1\leq i \leq 5}\int_0^1 A_i(\theta) d\theta \langle f,e_i \rangle_{v},
\end{align*}
where
\begin{align} \label{A12}
\begin{split}
A_1(\theta)&=a\rho^{a-1}_{\theta} T^b_{\theta}, \qquad A_{i+1}(\theta)= - a\rho^{a-2}_{\theta} U_{\theta i} T^b_{\theta} , \quad i=1,2,3,  \cr
A_5(\theta)&= \frac{|U_{\theta}|^2-3T_{\theta}+3}{3\rho_{\theta}}a\rho^{a-1}_{\theta} T^b_{\theta} + \sqrt{\frac{2}{3}} b\rho^{a-1}_{\theta} T^{b-1}_{\theta}.
\end{split}
\end{align}
Therefore, we obtain
\begin{align*}
\nu(\mathcal{M}(F)-F)= (\mathbf{P}f - f) + \Gamma_1(f) + \Gamma_2(f).
\end{align*}
\end{proof}


\begin{lemma}\label{rho,T esti} Recall the macroscopic fields defined in \eqref{macro quan}. Let $(t,x)\in \R^+ \times \mathbb{T}^3$ and $q>5$. Then, we have the upper bounds for macroscopic fields:
\begin{align*}
\left(\begin{array}{ccc} \rho(t,x) \\ \rho(t,x) U(t,x) \\3\rho(t,x) T(t,x) + \rho(t,x)|U(t,x)|^2  \end{array}\right) = \int_{\mathbb{R}^3} F(t,x,v) \left(\begin{array}{ccc} 1 \\ v \\ |v|^2 \end{array}\right) dv \leq C_{q}  \sup_{0\leq s \leq t}\| F(s)\|_{L^{\infty,q}_{x,v}},  
\end{align*}
where $C_q = \frac{1}{5}+\frac{1}{q-5}$.
\end{lemma}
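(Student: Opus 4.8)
The plan is to split $\R^3_v$ into the region $|v|\le R$ and $|v|>R$ for a parameter $R>1$ to be optimized, and bound the contribution of each piece using the weighted $L^\infty$ norm. On the inner region, $\langle v\rangle^q|F|\le \|F(s)\|_{L^{\infty,q}_{x,v}}$ gives $|F(t,x,v)|\le \sup_{0\le s\le t}\|F(s)\|_{L^{\infty,q}_{x,v}}$, and integrating the weight $1$, $|v|$, or $|v|^2$ against the bound on $\{|v|\le R\}$ produces a factor that grows like $R^3$, $R^4$, $R^5$ respectively. On the outer region, I keep the weight: $|F(t,x,v)|\le \langle v\rangle^{-q}\sup_{0\le s\le t}\|F(s)\|_{L^{\infty,q}_{x,v}}$, and $\int_{|v|>R}|v|^k\langle v\rangle^{-q}\,dv$ converges for $q>k+3$ (so in particular for $q>5$ we handle $k=0,1,2$) and decays like a negative power of $R$.

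Concretely, for the top component $|v|^2$, the inner part is $\lesssim R^5 \sup_s\|F(s)\|_{L^{\infty,q}_{x,v}}$ and the outer part is $\lesssim R^{5-q}\sup_s\|F(s)\|_{L^{\infty,q}_{x,v}}$ (using $\langle v\rangle^{-q}\le |v|^{-q}$ on $|v|>R$ and computing $\int_R^\infty r^{2+2}r^{-q}\,dr = R^{5-q}/(q-5)$ after the angular integration, tracking constants). Adding them and choosing $R$ to balance — or more simply taking the concrete split that makes the $r$-integrals split exactly at $r=1$ versus using the natural threshold from $1+|v|^q$ — yields the stated constant $C_q=\frac15+\frac1{q-5}$: the $\frac15$ comes from $\int_0^1 r^4\,dr$ (the $|v|\le 1$ contribution with weight $|v|^2$, angular factor absorbed by normalization of $\mu$ or by the specific bookkeeping in the paper) and the $\frac1{q-5}$ from $\int_1^\infty r^{4-q}\,dr$. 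The other two components are strictly easier since their weights $1$ and $|v|$ grow slower, so the same split gives smaller constants, all dominated by $C_q$; one then reads off the vector inequality componentwise, noting nonnegativity of $F$ lets us drop absolute values inside the moment integrals.

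The only mild subtlety — and the step I would be most careful with — is getting the \emph{sharp} constant $C_q=\frac15+\frac1{q-5}$ rather than just some $q$-dependent constant: this forces a particular choice of the splitting radius (namely $R=1$, exploiting that $\langle v\rangle^q = 1+|v|^q$ so that $\langle v\rangle^{-q}\le 1$ on $|v|\le1$ and $\langle v\rangle^{-q}\le |v|^{-q}$ on $|v|\ge1$) and a clean handling of the angular integration, presumably using $\int_{\mathbb{R}^3}g(|v|)\,dv = 4\pi\int_0^\infty g(r)r^2\,dr$ and absorbing $4\pi$ appropriately, or rather the bound is meant relative to the surface-measure normalization implicit in the paper. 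Everything else is an elementary one-variable integral, so there is no real obstacle; it is just a matter of organizing the two-region estimate and not losing the constant. I would write it as: fix $(t,x)$, drop the absolute value by $F\ge0$, split at $|v|=1$, bound the inner integral by $\sup_s\|F(s)\|_{L^{\infty,q}_{x,v}}\int_{|v|\le1}|v|^k\,dv$ and the outer by $\sup_s\|F(s)\|_{L^{\infty,q}_{x,v}}\int_{|v|\ge1}|v|^{k-q}\,dv$ for $k=0,1,2$, compute the four radial integrals, and collect.
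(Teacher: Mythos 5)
Your proposal is correct and follows essentially the same route as the paper: multiply and divide by $\langle v\rangle^q$, then bound $\int_{\mathbb{R}^3}|v|^2\langle v\rangle^{-q}\,dv$ by splitting the radial integral at $r=1$, giving $\int_0^1 r^4\,dr=\tfrac15$ and $\int_1^\infty r^{4-q}\,dr=\tfrac1{q-5}$, with the lower-order moments handled identically. Your worry about the angular factor is also exactly what happens in the paper's own computation, where the $4\pi$ from passing to radial coordinates is simply absorbed into the bookkeeping for $C_q$.
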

\begin{proof} We only consider the last inequality since the other inequalities are similar. Note that
\begin{align*}
\int_{\mathbb{R}^3}|v|^2F dv = \int_{\mathbb{R}^3}\frac{\langle v \rangle^q}{\langle v \rangle^q}|v|^2F dv\leq \int_{\mathbb{R}^3}\frac{1}{\langle v \rangle^q}|v|^2 dv \| F(s)\|_{L^{\infty,q}_v}.
\end{align*}
Then by the following explicit computation
\begin{align*}
\int_{\mathbb{R}^3}|v|^2\langle v \rangle^{-q} dv = 4\pi \int_0^{\infty} \frac{|v|^4}{1+|v|^q} d|v|
&\leq 4\pi\left(\int_0^1 |v|^4 d|v|+\int_1^{\infty} |v|^{4-q} d|v|\right) \leq \frac{1}{5}+\frac{1}{q-5},
\end{align*}
we obtain desired result.
\end{proof}

We present some $L^\infty$ estimates for the macroscopic fields:
\begin{lemma}\label{Pesti}\emph{\cite{P-P}} Consider a non-negative function $F \in L^{\infty}_x(\mathbb{T}^3)L^1_v(\mathbb{R}^3)$ and recall corresponding macroscopic fields defined in \eqref{macro quan}. The macroscopic fields enjoy the following estimates:
\begin{align*}
(1)~&\frac{\rho}{T^{\frac{3}{2}}} \leq C\|F\|_{L^{\infty}_{x,v}}, \cr
(2)~&|v|^q \mathcal{M}(F) \leq C_q\| F \|_{L^{\infty,q}_{x,v}}, \qquad  \textit{for}\qquad   q=0, \quad 5<q.
\end{align*}
\end{lemma}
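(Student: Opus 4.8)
Both estimates are classical (they go back to \cite{P-P}); here is the route I would take, adapted to the polynomial weight $\langle v \rangle^q$.

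For (1) the plan is a Chebyshev truncation in velocity. Fixing $(t,x)$ and writing $M:=\|F(t,x,\cdot)\|_{L^\infty_v}\le\|F\|_{L^\infty_{x,v}}$, I would split, for an arbitrary $R>0$,
\[
\rho=\int_{|v-U|\le R}F\,dv+\int_{|v-U|>R}F\,dv\le \tfrac{4\pi}{3}MR^3+\frac1{R^2}\int_{\mathbb{R}^3}|v-U|^2F\,dv=\tfrac{4\pi}{3}MR^3+\frac{3\rho T}{R^2},
\]
using the definition of $T$ in \eqref{macro quan}. Choosing $R=\sqrt{6T}$ turns the last term into $\rho/2$, which I absorb on the left, so that $\rho/T^{3/2}\le CM$ with an explicit constant. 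Statement (2) for $q=0$ then follows at once, since $\mathcal{M}(F)(v)\le\rho\,(2\pi T)^{-3/2}$ pointwise and the right-hand side is $\le C\|F\|_{L^\infty_{x,v}}$ by (1).

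The real content is (2) for $q>5$. I would write $\langle v \rangle^q=1+|v|^q$, dispose of the ``$1$'' exactly as in the $q=0$ case, and for the $|v|^q$ part use $|v|^q\le 2^{q-1}(|v-U|^q+|U|^q)$. For the $|v-U|^q$ term, pulling out the elementary identity $\sup_{r\ge0}r^q e^{-r^2/2T}=(qT)^{q/2}e^{-q/2}$ reduces matters to proving $\rho\,T^{(q-3)/2}\le C_q\|F\|_{L^{\infty,q}_{x,v}}$; since $3\rho T=\int|v-U|^2F\,dv\le\int|v|^2F\,dv$, this in turn follows from the interpolation inequality
\[
\int_{\mathbb{R}^3}|v|^2F\,dv\le C_q\Big(\int_{\mathbb{R}^3}F\,dv\Big)^{\frac{q-5}{q-3}}\|F\|_{L^{\infty,q}_{x,v}}^{\frac2{q-3}},
\]
which I would get by splitting $\int|v|^2F=\int_{|v|<R}+\int_{|v|\ge R}$, bounding $|v|^2F\le R^2F$ on the first piece and using $F\le\|F\|_{L^{\infty,q}_{x,v}}|v|^{-q}$ on the second (so its contribution is $\le C_q\|F\|_{L^{\infty,q}_{x,v}}R^{5-q}$, finite because $q>5$), and optimizing in $R$, up to an elementary case distinction when the optimal radius is $O(1)$. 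For the $|U|^q$ term, which is bounded by $C_q\rho|U|^q/T^{3/2}$, I would separate the regimes $|U|\le 2\sqrt{6T}$ and $|U|>2\sqrt{6T}$: in the former, $|U|^q\le C_qT^{q/2}$ and we are back to the previous estimate; in the latter, the same Chebyshev truncation as in (1) localizes a mass $\rho/2$ of $F$ in the ball $\{|v-U|\le\sqrt{6T}\}$, which sits inside $\{|v|\ge|U|/2\}$, so that $\rho/2\le\|F\|_{L^{\infty,q}_{x,v}}(1+|U|/2)^{-q}|B_{\sqrt{6T}}|$, i.e. $\rho|U|^q/T^{3/2}\le C_q\|F\|_{L^{\infty,q}_{x,v}}$. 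Assembling these pieces gives $\langle v \rangle^q\mathcal{M}(F)\le C_q\|F\|_{L^{\infty,q}_{x,v}}$.

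The main obstacle is precisely the estimate $\rho\,T^{(q-3)/2}\le C_q\|F\|_{L^{\infty,q}_{x,v}}$: the naive moment bounds $\rho\lesssim\|F\|_{L^{\infty,q}_{x,v}}$ and $\rho T\lesssim\|F\|_{L^{\infty,q}_{x,v}}$ together with (1) do \emph{not} suffice when $q>5$ (they would need a lower bound on $\rho$), so one must genuinely exploit the full pointwise tail $F\le\|F\|_{L^{\infty,q}_{x,v}}\langle v \rangle^{-q}$ through the interpolation step, and the possible largeness of the drift $U$ has to be absorbed by the ``far from the origin'' localization. Everything else is bookkeeping, and tracking the constants shows they are finite exactly under the stated restrictions on $q$.
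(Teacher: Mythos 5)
Your proposal is correct: part (1) is the standard Chebyshev truncation with $R=\sqrt{6T}$, and your treatment of (2) — splitting $|v|^q\lesssim |v-U|^q+|U|^q$, reducing the first piece to $\rho T^{(q-3)/2}\leq C_q\|F\|_{L^{\infty,q}_{x,v}}$ via the interpolation $\int|v|^2F\,dv\leq C_q\rho^{\frac{q-5}{q-3}}\|F\|_{L^{\infty,q}_{x,v}}^{\frac{2}{q-3}}$, and absorbing large $|U|$ by localizing half the mass in $\{|v-U|\leq\sqrt{6T}\}\subset\{|v|\geq|U|/2\}$ — has consistent exponents and is exactly the classical Perthame--Pulvirenti argument. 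The paper gives no proof of its own (it simply cites \cite{P-P}), so your write-up is a faithful reconstruction of the cited argument; the only cosmetic remark is that the ``case distinction'' in the $R$-optimization is unnecessary, since the split inequality holds for every $R>0$.
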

\begin{proof}
We refer to \cite{P-P}.
\end{proof}

\section{Transition to quadratic nonlinear regime}
As mentioned before, the highly nonlinear behavior of the BGK operator combined with large amplitude $\|f(s)\|_{L_{x,v}^{\infty, q}}$ is one of the main obstacles in proving the asymptotic behavior of the solution. In this section, we prove that the macroscopic fields $(\rho,U,T)$ are uniformly close to $(1,0,1)$ for $t \geq t_{eq}$ so that $\Gamma$ of \eqref{def gamma} becomes quadratic nonlinear in terms of $f$. We also note that $t_{eq}$ should be chosen so that it depends only on initial data and other generic constants.  \\
Throughout this section, we impose a priori assumption
\begin{align} \label{Assumption}
\sup_{0\leq s \leq t_*} \| f(s)\|_{L^{\infty,q}_{x,v}} \leq \overbar{M}, \quad \mbox{for} \quad q>10,
\end{align}
where $t_{*}$ is arbitrary large as much as needed. Both $\overbar{M}$ and $t_{*}$ will be chosen depending on initial data in the proof of Theorem \ref{maintheorem} in Section \ref{sec 5}.


\begin{proposition}\label{macrodecay} Let us assume \eqref{initial LB} and the a priori assumption \eqref{Assumption}.
For any given positive constant $\delta\in(0,1)$, there exists sufficiently small $\mathcal{E}(\overbar{M},t_*,\delta)$ such that if $\mathcal{E}(F_0) \leq \mathcal{E}(\overbar{M},t_*,\delta)$, then the following estimate holds on $t\in[0,t_*]$:
\Be \label{v2f}
\left \vert \int_{\mathbb{R}^3} \langle v\rangle^2f(t,x,v) dv  \right \vert \leq C_qM_0e^{-t}  + \frac{3}{4}\delta.
\Ee
\end{proposition}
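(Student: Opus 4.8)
The plan is to estimate $\int_{\mathbb{R}^3}\langle v\rangle^2 f(t,x,v)\,dv$ through the mild formulation of \eqref{reform BGK} along the free characteristics $s\mapsto x-v(t-s)$, and to extract the required smallness from the decay of the relative entropy. Writing the Duhamel formula for \eqref{reform BGK} and integrating $\langle v\rangle^2$ against $dv$,
\begin{align*}
\int_{\mathbb{R}^3}\langle v\rangle^2 f(t,x,v)\,dv
&= e^{-t}\int_{\mathbb{R}^3}\langle v\rangle^2 f_0(x-vt,v)\,dv \\
&\quad +\int_0^t e^{-(t-s)}\int_{\mathbb{R}^3}\langle v\rangle^2\big[\mathbf{P}f+\Gamma(f)\big](s,x-v(t-s),v)\,dv\,ds.
\end{align*}
The first term is bounded by $e^{-t}C_qM_0$ using $|f_0(x-vt,v)|\le M_0\langle v\rangle^{-q}$ and $\int_{\mathbb{R}^3}\langle v\rangle^2\langle v\rangle^{-q}\,dv\le C_q$ (finite since $q>10>5$), giving exactly the first term of \eqref{v2f}. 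It remains to bound the Duhamel integral by $\tfrac34\delta$ on $[0,t_*]$, and the smallness there must come from $\mathcal{E}(F)(s)\le\mathcal{E}(F_0)$ (H-theorem).

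Before that I would record a uniform two-sided control of the macroscopic fields. Under \eqref{initial LB} and \eqref{Assumption}, on $[0,t_*]$ one has $\rho,\rho_\theta,T,T_\theta\in[c(\overbar{M},t_*,C_0),C(\overbar{M},t_*,C_0)]$ and $|U|,|U_\theta|\le C(\overbar{M},t_*,C_0)$: the upper bounds are Lemma \ref{rho,T esti}; hence $\nu=\rho^aT^b$ is bounded, so dropping the gain term in the mild form of \eqref{BGK} gives $\rho(t,x)\ge e^{-\nu_{\max}(\overbar{M})t}\int F_0(x-vt,v)\,dv\ge C_0e^{-\nu_{\max}(\overbar{M})t_*}$; Lemma \ref{Pesti}(1) turns this into a positive lower bound for $T$; and the rearrangement $3\rho_\theta T_\theta=3(1-\theta)+3\theta\rho T+\theta(1-\theta)\rho|U|^2/\rho_\theta$ of \eqref{transition} transfers the lower bounds to $\rho_\theta,T_\theta$. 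Consequently the rational prefactors $A_i(\theta)$ of \eqref{A12} and $\mathcal{P}_{ij}((v-U_\theta),U_\theta,T_\theta)\mathcal{M}(\theta)/(\rho_\theta^{\alpha_{ij}}T_\theta^{\beta_{ij}})$ of \eqref{polyform}, together with $\mathcal{M}(\theta)$ itself, are all dominated by $C(\overbar{M},t_*)$ times a fixed velocity profile that decays faster than $\langle v\rangle^{-3}$. Neutralizing the highly nonlinear character of $\Gamma$ over the finite window $[0,t_*]$ in this way — which rests entirely on the positivity of $\rho$ and $T$ supplied by \eqref{initial LB} — is the step I expect to be the most delicate.

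For the Duhamel integral I would use $\mathbf{P}f+\Gamma(f)=\nu\mathbf{P}f-(\nu-1)f+\Gamma_2(f)$ (since $\Gamma_1=(\nu-1)(\mathbf{P}f-f)$), so that $\int\langle v\rangle^2[\cdots]\,dv$ splits into three pieces, each of the form $\int\phi(v)\,h(s,x-v(t-s))\,dv$ where $\phi$ is a bounded velocity profile decaying faster than $\langle v\rangle^{-3}$ (for the middle piece one first writes $|f(s,\cdot,v)|\le\langle v\rangle^{-q}\overbar{M}$, so that $h=\nu-1$) and $h(s,\cdot)$ is a macroscopic moment of $f$, $\rho^aT^b$ times a moment, or a product of two moments; in all three cases the uniform macroscopic bounds give $\|h(s,\cdot)\|_{L^1(\mathbb{T}^3)}\le C(\overbar{M},t_*)\|F(s)-\mu\|_{L^1_{x,v}(\langle v\rangle^2)}$ (pulling out one moment in $L^\infty_x$ at cost $C\overbar{M}$ in the quadratic piece). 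Now split $\int_0^t$ at $s=t-\eta$ for a small $\eta=\eta(\overbar{M},t_*,\delta)$. On the short interval the previous step gives $|\int\langle v\rangle^2[\mathbf{P}f+\Gamma(f)]|\le C(\overbar{M},t_*)$, hence a contribution $\le C(\overbar{M},t_*)\eta\le\tfrac14\delta$. On $[0,(t-\eta)^+]$ I would change variables $v\mapsto y=x-v(t-s)$ on $\mathbb{T}^3$ (Jacobian $(t-s)^{-3}$; the lattice sum over periodic images of $\phi$ converges because $\phi$ decays faster than $\langle v\rangle^{-3}$), producing the dispersive bound $|\int\phi(v)h(s,x-v(t-s))\,dv|\le C\max\{1,(t-s)^{-3}\}\|h(s,\cdot)\|_{L^1(\mathbb{T}^3)}\le C(\overbar{M},t_*)\eta^{-3}\|F(s)-\mu\|_{L^1_{x,v}(\langle v\rangle^2)}$; integrating $e^{-(t-s)}$ over this interval gives a contribution $\le C(\overbar{M},t_*)\eta^{-3}\sup_{0\le s\le t_*}\|F(s)-\mu\|_{L^1_{x,v}(\langle v\rangle^2)}$. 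Finally, by the H-theorem and the Csisz\'ar--Kullback--Pinsker inequality (valid since $F\ge0$ shares its mass with $\mu$ by \eqref{normal}), $\|F(s)-\mu\|_{L^1_{x,v}}\le C\sqrt{\mathcal{E}(F_0)}$, and splitting off a large-velocity tail (absorbed by a cutoff $R=R(\overbar{M},\delta)$, using $q>5$) gives $\|F(s)-\mu\|_{L^1_{x,v}(\langle v\rangle^2)}\le C(\overbar{M})R^2\sqrt{\mathcal{E}(F_0)}+C\overbar{M}R^{5-q}$; choosing $\mathcal{E}(F_0)\le\mathcal{E}(\overbar{M},t_*,\delta)$ small enough makes this contribution $\le\tfrac12\delta$. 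Adding up yields $|\int\langle v\rangle^2 f(t,x,v)\,dv|\le C_qM_0e^{-t}+\tfrac14\delta+\tfrac12\delta$, as claimed. The $(t-s)^{-3}$ singularity forcing the $\eta$-cutoff, and the $t_*$-dependent lower bound $\rho\ge C_0e^{-\nu_{\max}t_*}$, are the structural reasons the thresholds in the statement may depend on $t_*$.
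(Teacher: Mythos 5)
Your argument is correct and follows the same skeleton as the paper's proof: Duhamel along free characteristics, uniform two\mbox{-}sided bounds on $(\rho,U,T)$ and $(\rho_\theta,U_\theta,T_\theta)$ on $[0,t_*]$ deduced from \eqref{initial LB}, Lemma \ref{rho,T esti} and Lemma \ref{Pesti} to reduce $\Gamma$ to ``constant times a moment of $|f|$'', a time cut near $s=t$ to handle the Jacobian singularity of $v\mapsto y=x-v(t-s)$, the lattice/counting bound on the torus, and finally entropy smallness via the H-theorem. The differences are in the implementation and are worth noting. First, where the paper extracts smallness through Lemma \ref{L2 control} (the pointwise splitting into $\{|f|\le\mu\}$, treated by Cauchy--Schwarz over $\{|u|\le 2N\}$, and $\{|f|>\mu\}$, treated linearly, yielding the $\mathcal{E}(F_0)+N^{3/2}\sqrt{\mathcal{E}(F_0)}$ structure), you invoke the Csisz\'ar--Kullback--Pinsker inequality to get $\|F(s)-\mu\|_{L^1_{x,v}}\le C\sqrt{\mathcal{E}(F_0)}$ directly and then buy the $\langle v\rangle^2$ weight with a velocity cutoff $R$ plus the a priori $L^{\infty,q}$ bound; this is a legitimate shortcut (it needs $F\ge0$, equal mass, and monotonicity of the relative entropy, exactly the ingredients the paper also uses), and it is arguably cleaner, at the price of being less local in velocity. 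Second, you avoid the paper's outer truncation $|v|\le N$ by using that all velocity profiles ($\langle v\rangle^{2-q}$, $\langle v\rangle^{4}\mu$, and the uniformly dominated $\mathcal{P}_{ij}\mathcal{M}(\theta)/(\rho_\theta^{\alpha_{ij}}T_\theta^{\beta_{ij}})$) are integrable, and you repackage $\mathbf{P}f+\Gamma_1(f)=\nu\mathbf{P}f-(\nu-1)f$, using Lipschitz dependence of $\nu=\rho^aT^b$ on the moments in the region where $\rho,T$ are bounded above and below; the paper instead keeps $\Gamma_1$ as is and bounds $A_i(\theta)$, which amounts to the same estimate. Both routes give thresholds depending only on $\overbar{M}$, $t_*$, $\delta$ (and the fixed data constants $C_0,M_0,q$), as required by the statement.
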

The above proposition implies the following two important properties. First, the macroscopic fields $(\rho,U,T)$ become uniformly close to $(1,0,1)$ after time $t \geq t_{eq}(M_0, \delta)$, i.e., 
\begin{align*}
|\rho-1|,~|\rho U|,~|3\rho T+\rho|U|^2-3| \leq \delta, \quad \mbox{for} \quad t\geq t_{eq}.
\end{align*}
This will be proved in Lemma \ref{macrodelta}. And after $t_{eq}$, the high order nonlinearity of $\Gamma$ on the BGK model is transformed into quadratic nonlinear form for which we are able to prove the asymptotic behavior of the solution.
To prove Proposition \ref{macrodecay}, we first decompose the L.H.S of \eqref{v2f} into several pieces.

\begin{lemma}\label{termdivide} Let us assume the a priori assumption \eqref{Assumption}. For an arbitrary real number $N >1$, we have
\begin{align*}
\left \vert \int_{\mathbb{R}^3} \langle v\rangle^2f(t,x,v) dv  \right \vert &\leq C_{q}M_0e^{-t} + \frac{C_q}{N^{q-5}} \overbar{M}  \cr
&\quad + \int_{0}^{t} e^{-(t-s)}\int_{\vert v \vert \leq N} \langle v\rangle^2\vert \mathbf{P}f (s, x - v(t-s),v)\vert dvds   \cr
&\quad + \int_{0}^{t} e^{-(t-s)} \int_{\vert v \vert \leq N} \langle v\rangle^2\vert \Gamma(f) (s, x - v(t-s),v)\vert dvds.
\end{align*}
\end{lemma}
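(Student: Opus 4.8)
The plan is to integrate the reformulated BGK equation \eqref{reform BGK} along its backward characteristics and then estimate the weighted velocity moment term by term, splitting the velocity integral at $|v|=N$. Since the characteristics are the straight lines $s\mapsto x-v(t-s)$, one has $\frac{d}{ds}\big[f(s,x-v(t-s),v)\big]=(-f+\mathbf{P}f+\Gamma(f))(s,x-v(t-s),v)$, so multiplying by the integrating factor $e^{s}$ and integrating over $s\in[0,t]$ yields the mild formulation
\[
f(t,x,v)=e^{-t}f_0(x-vt,v)+\int_0^t e^{-(t-s)}\big[\mathbf{P}f+\Gamma(f)\big](s,x-v(t-s),v)\,ds.
\]

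Next I would split the outermost integral $\int_{\mathbb{R}^3}\langle v\rangle^2 f(t,x,v)\,dv$ into the regions $\{|v|\le N\}$ and $\{|v|>N\}$ \emph{before} inserting the Duhamel formula. On $\{|v|>N\}$, the a priori assumption \eqref{Assumption} gives the pointwise bound $|f(t,x,v)|\le\overbar{M}\langle v\rangle^{-q}$ (valid for $t\le t_*$), hence
\[
\Big|\int_{|v|>N}\langle v\rangle^2 f\,dv\Big|\le\overbar{M}\int_{|v|>N}\langle v\rangle^{2-q}\,dv\le\frac{C_q}{N^{q-5}}\overbar{M},
\]
where the tail integral is finite and decays in $N$ because $q>10>5$, exactly as in the computation of Lemma \ref{rho,T esti}. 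On $\{|v|\le N\}$ I then use the mild formulation; the free-transport term is handled by enlarging the domain of integration back to $\mathbb{R}^3$ and using $\|f_0\|_{L^{\infty,q}_{x,v}}\le M_0$:
\[
e^{-t}\Big|\int_{|v|\le N}\langle v\rangle^2 f_0(x-vt,v)\,dv\Big|\le e^{-t}M_0\int_{\mathbb{R}^3}\langle v\rangle^{2-q}\,dv\le C_q M_0 e^{-t}.
\]

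Finally, the remaining contribution from $\{|v|\le N\}$ is
\[
\int_{|v|\le N}\langle v\rangle^2\int_0^t e^{-(t-s)}\big|\big[\mathbf{P}f+\Gamma(f)\big](s,x-v(t-s),v)\big|\,ds\,dv,
\]
and the elementary inequality $|\mathbf{P}f+\Gamma(f)|\le|\mathbf{P}f|+|\Gamma(f)|$ together with Tonelli's theorem to exchange the order of the $s$- and $v$-integration produces the last two terms in the claimed estimate. This is essentially a bookkeeping argument rather than one with a genuine obstacle; the only points that require attention are that the constant $C_q$ is understood as the finite value of $\int_{\mathbb{R}^3}\langle v\rangle^{2-q}\,dv$ together with the corresponding tail bound (this is where $q>5$ enters), and that the pointwise $L^{\infty,q}$ control of $f$ through \eqref{Assumption} is only available for $t\le t_*$, which is precisely the time interval on which Proposition \ref{macrodecay} will be applied.
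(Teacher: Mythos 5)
Your argument is correct and coincides with the paper's proof: the same splitting of the velocity integral at $|v|=N$, the a priori bound \eqref{Assumption} on the tail region, and the mild (Duhamel) formulation of \eqref{reform BGK} on $\{|v|\le N\}$ with the initial-data term absorbed into $C_qM_0e^{-t}$. No further comment is needed.
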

\begin{proof}
We split the velocity integration into $\{\vert v \vert \leq N\}$ and $\{\vert v \vert \geq N\}$ for arbitrary real number $N >1 $:
\begin{align*}
\int_{\mathbb{R}^3} \langle v\rangle^2f(t,x,v) dv = \int_{\vert v \vert \geq N} \langle v\rangle^2f(t,x,v)dv + \int_{\vert v \vert \leq N} \langle v\rangle^2f(t,x,v) dv.
\end{align*}
For the large velocity region $ \{\vert v \vert \geq N\}$, applying the a priori bound $\sup_{0\leq s \leq t_*}\| f(s)\|_{L^{\infty,q}_{x,v}} \leq \overbar{M}$, we have
\begin{align*}
\left \vert \int_{\vert v \vert \geq N} \langle v\rangle^2f(t,x,v)dv\right \vert &\leq \int_{\vert v \vert \geq N} \langle v \rangle^{-q+2}\langle v \rangle^{q} \vert f(t,x,v) \vert dv 
\leq \frac{C_q}{N^{q-5}} \overbar{M},
\end{align*}
for $t\in[0,t_*]$. 
For the bounded velocity region $ \{\vert v \vert \leq N\}$, we use the mild formulation of the reformulated BGK equation \eqref{reform BGK} to get
\begin{align}\label{Rf est 2}
\begin{split}
\left \vert  \int_{\vert v \vert \leq N} \langle v\rangle^2f(t,x,v) dv \right \vert&\leq e^{-t} \int_{\vert v\vert \leq N} \langle v\rangle^2 \vert f_0 (x-vt, v)\vert dv \cr
&\quad + \int_{0}^{t} e^{-(t-s)}\int_{\vert v \vert \leq N} \langle v\rangle^2\vert \mathbf{P}f (s, x - v(t-s),v)\vert dvds   \\
&\quad + \int_{0}^{t} e^{-(t-s)} \int_{\vert v \vert \leq N} \langle v\rangle^2\vert \Gamma(f) (s, x - v(t-s),v)\vert dvds.
\end{split}
\end{align}
The first term on the R.H.S of \eqref{Rf est 2} is bounded as follows:
\begin{align*}
e^{-t}\int_{\vert v\vert \leq N} \langle v\rangle^2 \vert f_0 (x-vt, v)\vert dv = e^{-t} \int_{\vert v\vert \leq N}  \langle v \rangle^{-q+2}\langle v \rangle^{q}\vert f_0 (x-vt, v)\vert dv\leq C_{q}e^{-t} M_0,
\end{align*}
which gives the desired result.
\hide where
\begin{align*}
C_q= \int_{\vert v\vert \leq N}\langle v \rangle^{-q+2} dv = 4\pi \int_0^N \frac{|v|^2}{1+|v|^{q-2}} d|v|
&\leq 4\pi\left(\int_0^1 |v|^2 d|v|+\int_1^N |v|^{4-q} d|v|\right) \cr
&\leq \frac{1}{3}+\frac{1}{q-4}\left(1-\frac{1}{N^{q-5}}\right)
\end{align*} \unhide
\end{proof}
The estimates for the second and third terms on the R.H.S of \eqref{Rf est 2} will be given in Lemma \ref{Pfesti} and Lemma \ref{nonlinMbar}, respectively.

Before we present the estimate of the second term of \eqref{Rf est 2}, we note the following important property about the relative entropy. Since H-theorem holds for the BGK equation, the proof of the following Lemma is very similar to that of \cite{GuoQAM} or \cite{KLP2022}. For the convenience of readers, we provide detailed proof.
\begin{lemma} \emph{\cite{GuoQAM, DWY2017,KLP2022}}\label{L2 control}
Assume that $F(t,x,v)=\mu(v)+f(t,x,v)$ is a solution of the BGK model \eqref{BGK}. For any $t\geq 0$, we have
\begin{align*}
\int_{\mathbb{T}^3 \times \R^3} \frac{1}{4\mu(v)}\vert f(t,x,v) \vert ^2 \mathbf{1}_{\vert f(t,x,v) \vert \leq \mu(v)} dvdx + \int_{\mathbb{T}^3 \times \R^3} \frac{1}{4} \vert f(t,x,v) \vert \mathbf{1}_{\vert f(t,x,v) \vert > \mu(v)} dvdx \leq \mathcal{E}(F_0),
\end{align*}
where initial relative entropy $\mathcal{E}(F_0)$ was defined in \eqref{relaentp}.
\end{lemma}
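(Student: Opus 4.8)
The plan is to exploit the H-theorem for the BGK equation, which (unlike the Boltzmann case) is already stated in the introduction: the entropy production
\[
\frac{d}{dt}\mathcal{E}(F)(t) = \int_{\mathbb{T}^3\times\mathbb{R}^3}\{\mathcal{M}(F)-F\}\{\ln\mathcal{M}(F)-\ln F\}\,dvdx \leq 0,
\]
combined with the conservation laws \eqref{conservf}, which guarantee $\mathcal{E}(F)(t)=\int F\ln(F/\mu)\,dvdx$ is the right quantity and that $\mathcal{E}(F)(t)\leq\mathcal{E}(F_0)$ for all $t\geq 0$. The point of the lemma is purely a pointwise-in-$(x,v)$ convexity estimate turning $F\ln(F/\mu)$ into a quadratic-or-linear lower bound in the perturbation $f=F-\mu$.

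First I would recall the elementary inequality for the relative entropy integrand. Writing $F=\mu+f$ with $F\geq 0$, one has the standard pointwise bound
\[
F\ln\frac{F}{\mu} - F + \mu \;=\; \mu\,h\!\left(\frac{f}{\mu}\right), \qquad h(z) := (1+z)\ln(1+z) - z \geq 0,
\]
and the key two-regime estimate $h(z) \geq \frac{1}{4}\min\{z^2,\,|z|\}$ valid for all $z\geq -1$ (this is a one-variable calculus fact: on $|z|\leq 1$ one checks $h(z)\geq z^2/4$ by Taylor/monotonicity, and for $z>1$ one checks $h(z)\geq |z|/4$, e.g. since $h$ is convex, $h(1)=2\ln2-1>1/4$, and $h'(z)=\ln(1+z)\geq 1/4$ there). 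Applying this with $z=f/\mu$ and multiplying by $\mu$ gives
\[
\mu\,h\!\left(\frac{f}{\mu}\right) \;\geq\; \frac{1}{4\mu}|f|^2\,\mathbf{1}_{|f|\leq\mu} \;+\; \frac{1}{4}|f|\,\mathbf{1}_{|f|>\mu}
\]
pointwise in $(t,x,v)$.

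Then I would integrate this inequality over $\mathbb{T}^3\times\mathbb{R}^3$. The left side integrates to $\mathcal{E}(F)(t)$, because $\int (F-\mu)\,dvdx = \int f\,dvdx = 0$ by the normalization \eqref{normal} propagated by mass conservation \eqref{conservf} — so the linear terms $-F+\mu$ drop out. By the H-theorem $\mathcal{E}(F)(t)\leq\mathcal{E}(F_0)$. Chaining the inequalities yields exactly
\[
\int_{\mathbb{T}^3\times\mathbb{R}^3}\frac{1}{4\mu}|f(t)|^2\,\mathbf{1}_{|f(t)|\leq\mu}\,dvdx + \int_{\mathbb{T}^3\times\mathbb{R}^3}\frac{1}{4}|f(t)|\,\mathbf{1}_{|f(t)|>\mu}\,dvdx \;\leq\; \mathcal{E}(F_0),
\]
which is the claim.

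The only genuinely non-routine point is the scalar inequality $h(z)\geq\frac14\min\{z^2,|z|\}$ on $z\geq-1$ — and even that is a short elementary verification, so the "main obstacle" here is merely bookkeeping: being careful that $F\geq 0$ (hence $z\geq -1$) so $h$ is defined, and that the linear term in the Taylor expansion of the relative entropy vanishes after integration thanks to conservation of mass together with \eqref{normal}. One subtlety worth stating explicitly is that to apply the H-theorem one needs $\mathcal{E}(F)(t)$ to be finite and the entropy production to be integrable; this is part of the well-posedness framework (the solution has bounded entropy, inherited from $\mathcal{E}(F_0)<\infty$), and it is why the lemma is attributed to the standard references \cite{GuoQAM,DWY2017,KLP2022} rather than proved from scratch — the argument above is the "detailed proof" the authors promise to include for the reader's convenience.
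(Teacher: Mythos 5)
Your proposal is correct and follows essentially the same route as the paper: a pointwise convexity lower bound for the relative entropy density $\mu\,\psi(F/\mu)$ (with the same split $|f|\leq\mu$ versus $|f|>\mu$ and the same constant $1/4$), combined with monotonicity of the relative entropy coming from the BGK entropy production and the conservation laws. The only differences are cosmetic: you verify the scalar inequality $h(z)\geq\tfrac14\min\{z^2,|z|\}$ directly, whereas the paper uses the second-order mean value expansion and bounds the intermediate value $\tilde F$ on the two regions, and you invoke the H-theorem for $\mathcal{E}(F)(t)$ as stated in the introduction while the paper re-derives it through the transport equation for $\mu\,\psi(F/\mu)$.
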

\begin{proof} Notice that the mean value theorem gives
\begin{align*}
	F\ln F -\mu \ln \mu = (1+\ln \mu) (F-\mu) +\frac{1}{2\tilde{F}}\vert F-\mu \vert^2,
\end{align*}
where $\tilde{F}$ is between $F$ and $\mu$. If we define the function $\psi(x):= x\ln x - x+1$, we have
\begin{align*}
	\frac{1}{2\tilde{F}}\vert F-\mu \vert^2 &= F\ln F - \mu \ln \mu -(1+\ln \mu) (F-\mu)
	=\psi\left( \frac{F}{\mu}\right)\mu.
\end{align*}
Hence, one obtains that
\begin{align} \label{3.2 proof 1}
	\int_{\mathbb{T}^3 \times \R^3} \frac{1}{2\tilde{F}}\vert F-\mu \vert^2 dvdx = \int_{\mathbb{T}^3 \times \R^3} \psi\left( \frac{F}{\mu}\right)\mu dvdx.
\end{align}
For the L.H.S in \eqref{3.2 proof 1}, we divide
\begin{align*}
	1=\textbf{1}_{\vert F-\mu \vert \leq \mu} + \textbf{1}_{\vert F-\mu \vert > \mu}.
\end{align*}
On $\{ \vert F-\mu \vert > \mu\}$, we have
\begin{align*}
	\frac{ \vert F-\mu \vert}{\tilde{F}}= \frac{F-\mu}{\tilde{F}} > \frac{F-\frac{1}{2}F}{F}=\frac{1}{2},
\end{align*}
where we used the fact $F>2\mu$. On the other hand, over $\{\vert F-\mu \vert \leq \mu \}$, we obtain $0\leq F \leq 2\mu$. This implies that
\begin{align*}
	\frac{1}{\tilde{F}} \geq \frac{1}{2\mu}.
\end{align*}
Thus, it follows from \eqref{3.2 proof 1} that
\begin{align} \label{3.2 proof 2}
	\int_{\mathbb{T}^3 \times \R^3} \frac{1}{4\mu} \vert F-\mu \vert^2 \textbf{1}_{\vert F-\mu \vert \leq \mu}dvdx + \int_{\mathbb{T}^3 \times \R^3} \frac{1}{4}\vert F-\mu \vert \textbf{1}_{\vert F-\mu \vert > \mu} dvdx \leq \int_{\mathbb{T}^3 \times \R^3} \psi\left( \frac{F}{\mu}\right) \mu dvdx.
\end{align}
By $\psi'(x)=\ln x$, we can deduce from \eqref{BGK} that
\begin{align*}
	\partial_t \left[ \mu \psi\left(\frac{F}{\mu} \right) \right]+\nabla_x \cdot \left[ \mu \psi \left( \frac{F}{\mu}\right)v\right]= \nu (\mathcal{M}(F)-F)\ln \frac{F}{\mu}.
\end{align*}
By taking integration over $(x,v) \in \mathbb{T}^3 \times \R^3$, we obtain
\begin{align*}
	\frac{d}{dt} \int_{\mathbb{T}^3 \times \R^3} \psi\left( \frac{F}{\mu}\right)\mu dv dx = \int_{\mathbb{T}^3 \times \R^3} \nu(\mathcal{M}(F)-F)\ln F dvdx.
\end{align*}
Because of the following inequality
\begin{align*}
	\int_{\mathbb{T}^3 \times \R^3} \nu(\mathcal{M}(F)-F)\ln F dvdx\leq 0,
\end{align*}
we get
\begin{align} \label{3.2 proof 3}
	\int_{\mathbb{T}^3 \times \R^3}  \psi\left( \frac{F}{\mu}\right)\mu dv dx  \leq \int_{\mathbb{T}^3 \times \R^3}  \psi\left( \frac{F_0}{\mu}\right)\mu dvdx =\mathcal{E}(F_0).
\end{align}
Combining \eqref{3.2 proof 2} and \eqref{3.2 proof 3} yields that
\begin{align*}
	\int_{\mathbb{T}^3 \times \R^3} \frac{1}{4\mu} \vert F-\mu \vert^2 \textbf{1}_{\vert F-\mu \vert \leq \mu} dvdx + \int_{\mathbb{T}^3 \times \R^3} \frac{1}{4}\vert F-\mu \vert \textbf{1}_{\vert F-\mu \vert > \mu} dvdx \leq \mathcal{E}(F_0).
\end{align*}
We complete the proof of Lemma \ref{L2 control}.
\end{proof}

Now, we estimate the linear $\mathbf{P}f$ part on the R.H.S of \eqref{Rf est 2}.
\begin{lemma}\label{Pfesti} For $q>5$, if a priori assumption \eqref{Assumption} holds, then we have the following estimate
\begin{align*}
\int_{0}^{t}\int_{\vert v \vert \leq N } e^{-(t-s)} \langle v\rangle^2 \vert \mathbf{P}f (s, x - v(t-s),v)\vert dvds &\leq C_qN^5(1-e^{-\lambda}) \overbar{M}+\frac{C_q}{N^{q-10}}\overbar{M} \cr
&\quad + CN^6 (\lambda^{-2}+N^3) \left(\mathcal{E}(F_0)+N^{\frac{3}{2}}\sqrt{\mathcal{E}(F_0)} \right),
\end{align*}
for some arbitrary constants $N>1$ and $\lambda \in (0,t)$.
\end{lemma}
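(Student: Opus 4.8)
The plan is to reduce $\mathbf{P}f$ to its hydrodynamic moments, split the time integral at a cutoff $\lambda$ into a near-diagonal part on $[t-\lambda,t]$, where the backward characteristic $v\mapsto x-v(t-s)$ is almost degenerate, and a good part on $[0,t-\lambda]$; the former is handled by the crude a priori bound \eqref{Assumption}, and the latter by changing variables $y=x-v(t-s)$ and exploiting the $L^2$/$L^1$ smallness of $f$ near/far from $\mu$ supplied by Lemma \ref{L2 control}. Concretely, by \eqref{pf} we write $\mathbf{P}f(s,y,v)=\sum_{i=1}^{5}\rho_i(s,y)\,e_i(v)\mu(v)$ with $\rho_i(s,y):=\langle f(s,y,\cdot),e_i\rangle_v$; since each $e_i$ has polynomial growth of degree $\le 2$ we have $|e_i(v)\mu(v)|\le C$, and $\langle v\rangle^2\le CN^2$ on $\{|v|\le N\}$, so it suffices to bound $\int_0^t e^{-(t-s)}\int_{|v|\le N}|\rho_i(s,x-v(t-s))|\,dv\,ds$ for each $i$ and multiply by $CN^2$.

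\textbf{Moment bound.} Split $f=f\mathbf{1}_{|f|\le\mu}+f\mathbf{1}_{|f|>\mu}$. For the near part, Cauchy--Schwarz in $v$ together with $\int_{\mathbb{R}^3}|e_i|^2\mu\,dv<\infty$ gives $|\langle f\mathbf{1}_{|f|\le\mu},e_i\rangle_v|\le C\,g(s,y)$ with $g(s,y):=\big(\int_{\mathbb{R}^3}\mu(w)^{-1}|f(s,y,w)|^2\mathbf{1}_{|f|\le\mu}\,dw\big)^{1/2}$, and Lemma \ref{L2 control} yields $\int_{\mathbb{T}^3}g(s,y)^2\,dy\le 4\mathcal{E}(F_0)$ for all $s$. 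For the far part, split the $w$-integral at $|w|=N$: on $\{|w|\le N\}$ use $|e_i(w)|\le CN^2$ together with $h(s,y):=\int_{\mathbb{R}^3}|f(s,y,w)|\mathbf{1}_{|f|>\mu}\,dw$, which satisfies $\int_{\mathbb{T}^3}h(s,y)\,dy\le4\mathcal{E}(F_0)$ again by Lemma \ref{L2 control}; on $\{|w|>N\}$ use \eqref{Assumption} in the form $|f|\le\overbar{M}\langle w\rangle^{-q}$, which produces a contribution $\le C\overbar{M}N^{-(q-5)}$. Altogether
\[
|\rho_i(s,y)|\ \le\ C\,g(s,y)+CN^2\,h(s,y)+C\overbar{M}N^{-(q-5)},\qquad\text{and also}\qquad |\rho_i(s,y)|\le C_q\overbar{M}\ \ (q>5).
\]

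\textbf{The two time regions.} On $[t-\lambda,t]$ use only the crude bound $|\rho_i|\le C_q\overbar{M}$:
\[
CN^2\int_{t-\lambda}^t e^{-(t-s)}\!\!\int_{|v|\le N}\!|\rho_i(s,x-v(t-s))|\,dv\,ds\ \le\ C_q\overbar{M}N^{5}\int_{t-\lambda}^{t}\!e^{-(t-s)}\,ds\ =\ C_q\overbar{M}N^{5}(1-e^{-\lambda}),
\]
which is the first claimed term. On $[0,t-\lambda]$, fix $s$ and substitute $y=x-v(t-s)$: then $dv=(t-s)^{-3}\,dy$ and the set $\{|v|\le N\}$ covers $\mathbb{T}^3$ with pointwise multiplicity $\le C(1+N(t-s))^3$, so for any nonnegative $\phi$ on $\mathbb{T}^3$,
\[
\int_{|v|\le N}\phi(x-v(t-s))\,dv\ \le\ \frac{C(1+N(t-s))^3}{(t-s)^3}\,\|\phi\|_{L^1(\mathbb{T}^3)}.
\]
Applying this with $\phi=g(s,\cdot)$, using $\|g(s,\cdot)\|_{L^1(\mathbb{T}^3)}\le\|g(s,\cdot)\|_{L^2(\mathbb{T}^3)}\le2\sqrt{\mathcal{E}(F_0)}$, and with $\phi=h(s,\cdot)$, using $\|h(s,\cdot)\|_{L^1(\mathbb{T}^3)}\le4\mathcal{E}(F_0)$, then integrating in $s$ via $\int_\lambda^\infty u^{-3}\,du=\tfrac12\lambda^{-2}$ and $\int_\lambda^\infty e^{-u}\,du\le1$ after expanding $(1+N(t-s))^3\le C\big(1+N^3(t-s)^3\big)$, gives a bound dominated by $CN^6(\lambda^{-2}+N^3)\big(\mathcal{E}(F_0)+N^{3/2}\sqrt{\mathcal{E}(F_0)}\big)$; the remaining $\overbar{M}N^{-(q-5)}$ piece of $\rho_i$ contributes $\le CN^2\cdot N^3\cdot N^{-(q-5)}\overbar{M}=C_qN^{-(q-10)}\overbar{M}$. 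Summing the three contributions yields the stated inequality.

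\textbf{Main obstacle.} The delicate point is the change of variables on $[0,t-\lambda]$: the Jacobian $(t-s)^{-3}$ is singular as $s\to t$ and the characteristic flow wraps around $\mathbb{T}^3$, so one must keep track of the covering multiplicity $(1+N(t-s))^3$ and use that Lemma \ref{L2 control} controls $f$ only in $L^2_{x,v}$ near $\mu$ and in $L^1_{x,v}$ far from $\mu$. This is exactly what forces the $\lambda$-splitting, the $\lambda^{-2}$ loss, and the high powers of $N$ (and why the large-velocity tail requires $q$ large); the resulting bound only becomes effective once $\mathcal{E}(F_0)$ is taken small relative to $N$ and $\lambda$ in the later bootstrap argument.
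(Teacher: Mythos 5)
Your proposal is correct and follows essentially the same route as the paper: the time split at $t-\lambda$ with the crude bound $\|f\|_{L^{\infty,q}_{x,v}}\leq\overbar{M}$ near the diagonal, the change of variables $y=x-v(t-s)$ with covering multiplicity $C(1+N(t-s))^3/(t-s)^3$ on $[0,t-\lambda]$ and the resulting $C(\lambda^{-2}+N^3)$ time factor, Lemma \ref{L2 control} for the entropy smallness, and the a priori tail bound for large velocities producing the $\overbar{M}N^{-(q-10)}$ term. The only (harmless) deviation is organizational: you split $f$ by $|f|\lessgtr\mu$ at the level of the moments and apply Cauchy--Schwarz in $v$ against $\mu|e_i|^2$ pointwise in $x$, rather than truncating $|u|\leq 2N$ first and applying H\"older in $(y,u)$ after the change of variables as the paper does; this in fact avoids the $N^{3/2}\sqrt{\mathcal{E}(F_0)}$ loss, and your resulting bound is sharper than and hence dominated by the stated one.
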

\begin{proof}
We split the integration region into $I$, $II_{1}$, and $II_{2}$ as follows:
\begin{align*}
\int_{0}^{t}e^{-(t-s)} \int_{\vert v \vert \leq N }  \langle v\rangle^2 \vert \mathbf{P}f (s, x - v(t-s),v)\vert dvds \leq I+II_1+II_2,
\end{align*}
where
\begin{align*}
I&= \int_{t-\lambda}^{t}e^{-(t-s)} \int_{\vert v \vert \leq N} \langle v\rangle^2 \int_{\mathbb{R}^3} \vert f(s,x-v(t-s),u)\vert(1+\vert u \vert + \vert u \vert ^2) dudvds, \cr
II_1&= \int_0^{t-\lambda}e^{-(t-s)} \int_{\vert v \vert \leq N} \langle v\rangle^2 \int_{\vert u \vert \geq 2N} \vert f(s,x-v(t-s),u)\vert(1+\vert u \vert + \vert u \vert ^2) dudvds, \cr
II_2&= \int_0^{t-\lambda}e^{-(t-s)} \int_{\vert v \vert \leq N} \langle v\rangle^2 \int_{\vert u \vert \leq 2N} \vert f(s,x-v(t-s),u)\vert(1+\vert u \vert + \vert u \vert ^2) dudvds,
\end{align*}
for a positive constant $\lambda \in (0,t) $. \newline
{ (Estimate of $I$)} Multiplying and dividing $\langle u \rangle^q$, we have
\begin{align}\label{pf esti 1}
\begin{split}
I
&\leq  \int_{t-\lambda}^t e^{-(t-s)} \int_{\vert v \vert \leq N}\langle v\rangle^2\int_{\mathbb{R}^3}  \langle u \rangle^{-q+2}\langle u \rangle^q \vert f(s,x-v(t-s),u) \vert dudvds \\
&\leq C_qN^5(1-e^{-\lambda})\sup_{0\leq s \leq t_*}\Vert f(s) \Vert_{L^{\infty,q}_{x,v}} \cr
&\leq C_qN^5(1-e^{-\lambda}) \overbar{M},
\end{split}
\end{align}
where we used $\int_{\vert v \vert \leq N}\langle v\rangle^2dv \leq CN^5$ and $\int_{\mathbb{R}^3}  \langle u \rangle^{-q+2}du \leq C_q $ for $q>5$.
\newline
{ (Estimate of $II_1$)} 
Similarly, we multiply and divide $\langle u \rangle^q$ on $II_1$:
\begin{align} \label{pf esti 2}
\begin{split}
II_1&=\int_0^{t-\lambda}e^{-(t-s)} \int_{\vert v \vert \leq N} \int_{\vert u \vert \geq 2N}  \langle v\rangle^2 \vert f(s,x-v(t-s),u) \vert (1+\vert u \vert+ \vert u \vert^2) dudvds \\
&\leq \int_0^{t-\lambda}e^{-(t-s)} ds \int_{\vert v \vert \leq N}\langle v\rangle^2 dv \int_{\vert u \vert \geq 2N} \langle u \rangle^{-q+2} du \sup_{0\leq s \leq t_*}\Vert f(s) \Vert_{L^{\infty,q}_{x,v}} \\
&\leq \frac{C_q}{N^{q-10}}\overbar{M},
\end{split}
\end{align}
where we used $\int_{\vert v \vert \leq N}\langle v\rangle^2dv \leq CN^5$ and $\int_{\vert u \vert \geq 2N} \langle u \rangle^{-q+2} du \leq CN^{-q+5} $ for $q>5$.
\newline
{ (Estimate of $II_2$)} 
Using the upper bound $\langle v\rangle^2 \leq N^2$ and $(1+|u|+|u|^2)\leq 4N^2$, we have
\begin{align*}
II_2&\leq 4N^6 \int_0^{t-\lambda}e^{-(t-s)} \int_{\vert v \vert \leq N}  \int_{\vert u \vert \leq 2N}  \vert f(s,x-v(t-s),u)\vert dudvds.
\end{align*}
Then we apply a change of variable $y=x-v(t-s)$ with $dy=-(t-s)^3 dv$ to make a change $dv$ integral to space integral $dy$. Such a change of variable transforms the integral region $\{|v|\leq N\}$ to a sphere with a center $x$ and radius $v(t-s)$. We note that the maximum radius is $N(t-s)$. Since the space variable is in the torus, if $N(t-s) \geq 1$, then the maximum number of cubic reached by $y$ is $(N(t-s))^3$.
Conversely, if $N(t-s) \leq 1$, then the minimum number of cubic reached by $y$ is $1$.
Thus we have
\begin{align*}
II_2&\leq 4N^6 \int_0^{t-\lambda}e^{-(t-s)} \frac{1+(N(t-s))^3}{(t-s)^3} \int_{\mathbb{T}^3}  \int_{\vert u \vert \leq 2N}  \vert f(s,y,u)\vert dudyds.
\end{align*}
In order to apply Lemma \ref{L2 control}, we split the integral region into $\{|f|> \mu\}$ and $\{|f|\leq \mu\}$, and we multiply  $1/\sqrt{\mu(u)} \geq 1$ on the region $\{|f|\leq\mu\}$:
\begin{align*}
II_2&\leq 4N^6 \int_0^{t-\lambda}e^{-(t-s)} \frac{1+(N(t-s))^3}{(t-s)^3} \left( \int_{\mathbb{T}^3}\int_{\vert u \vert \leq 2N} \vert f(s,y,u) \vert\mathbf{1}_{\vert f(s,y,u)\vert > \mu(u)} dudy \right. \cr
&\quad \left. +  \int_{\mathbb{T}^3}\int_{\vert u \vert \leq 2N} \frac{1}{\sqrt{\mu(u)}}\vert f(s,y,u) \vert\mathbf{1}_{\vert f(s,y,u)\vert \leq \mu(u)}  dudy \right)ds.
\end{align*}
By the H\"{o}lder inequality on the second term, and applying Lemma \ref{L2 control}, we have
\begin{align}\label{cov}
\begin{split}
II_2&\leq CN^6 \int_0^{t-\lambda}e^{-(t-s)} \frac{1+(N(t-s))^3}{(t-s)^3} \left[ \int_{\mathbb{T}^3}\int_{\vert u \vert \leq 2N} \vert f(s,y,u) \vert\mathbf{1}_{\vert f(s,y,u)\vert > \mu(u)} dudy \right. \cr
&\quad \left. +  \left(\int_{\mathbb{T}^3}\int_{\vert u \vert \leq 2N} \frac{1}{\mu(u)}\vert f(s,y,u) \vert^2\mathbf{1}_{\vert f(s,y,u)\vert \leq \mu(u)}dudy\right)^{\frac{1}{2}}\left(\int_{\mathbb{T}^3}\int_{\vert u \vert \leq 2N}1dudy \right)^{\frac{1}{2}} \right]ds \cr
&\leq CN^6 (\lambda^{-2}+N^3) \left(\mathcal{E}(F_0)+N^{\frac{3}{2}}\sqrt{\mathcal{E}(F_0)} \right),
\end{split}
\end{align}
where we used
\begin{align*}
\int_0^{t-\lambda}e^{-(t-s)} \frac{1+(N(t-s))^3}{(t-s)^3} ds = \int_{\lambda}^{t}e^{-\tau} \frac{1+(N\tau)^3}{\tau^3} d\tau \leq C(\lambda^{-2}+N^3).
\end{align*}
Combining \eqref{pf esti 1}, \eqref{pf esti 2} and \eqref{cov}, we finish the proof.
\end{proof}

The third term of the R.H.S of \eqref{Rf est 2} has the nonlinear term $\Gamma(f)$. To control the nonlinear term, we should control the macroscopic fields under a priori assumption \eqref{Assumption}.

\begin{lemma}\label{macrobarM} Assume \eqref{initial LB} and \eqref{Assumption}. Then the macroscopic fields $(\rho,U,T)$ are bounded as follows:
\hide \begin{align*}
&(1)~  \rho(t,x) \leq C_q \overbar{M}, \quad \mbox{and} \quad  \rho(t,x) \geq C_0e^{-C_q\overbar{M}^at}, \cr
&(2)~ |U(t,x)| \leq C_q\overbar{M}e^{C_q\overbar{M}^at}  \cr
&(3)~  T(t,x) \leq  C_q\overbar{M}e^{C_q\overbar{M}^at}, \quad \mbox{and} \quad   T(t,x) \geq C \overbar{M}^{-\frac{2}{3}}e^{-\frac{2}{3}C_q\overbar{M}^at}.
\end{align*}\unhide
\begin{align}\label{macrobarM123}
\begin{split}
&(1)~  C_0e^{-C_q\overbar{M}^at}\leq \rho(t,x) \leq C_q \overbar{M},  \\
&(2)~ |U(t,x)| \leq C_q\overbar{M}e^{C_q\overbar{M}^at},   \\
&(3)~  C \overbar{M}^{-\frac{2}{3}}e^{-\frac{2}{3}C_q\overbar{M}^at} \leq T(t,x) \leq  C_q\overbar{M}e^{C_q\overbar{M}^at},
\end{split}
\end{align}
for a generic constant $C_q$.
\end{lemma}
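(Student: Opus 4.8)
The plan is to extract the upper bounds on $(\rho,U,T)$ directly from the second-moment estimate of Lemma~\ref{rho,T esti}, then obtain the lower bound on $\rho$ from the nonnegativity of $F$ via a Duhamel (characteristic) argument, and finally divide to recover the bounds on $U$ and the upper bound on $T$, using Lemma~\ref{Pesti}(1) for the lower bound on $T$.

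First, since $\|F(s)\|_{L^{\infty,q}_{x,v}}\le\|\mu\|_{L^{\infty,q}_{x,v}}+\|f(s)\|_{L^{\infty,q}_{x,v}}\le C_q+\overbar{M}$ for $s\in[0,t_*]$, Lemma~\ref{rho,T esti} gives at once $\rho(t,x)\le C_q\overbar{M}$, $\rho(t,x)|U(t,x)|\le C_q\overbar{M}$, and, since $3\rho T=\int_{\mathbb{R}^3}F|v|^2\,dv-\rho|U|^2\le\int_{\mathbb{R}^3}F|v|^2\,dv$, also $\rho(t,x)T(t,x)\le C_q\overbar{M}$. The crucial consequence is a uniform bound on the collision frequency: writing $\nu=\rho^aT^b=\rho^{a-b}(\rho T)^b$ and using $a\ge b\ge0$, we obtain $\nu(t,x)\le(C_q\overbar{M})^{a-b}(C_q\overbar{M})^b=C_q\overbar{M}^a$ on $[0,t_*]\times\mathbb{T}^3$, with \emph{no} lower bound on $\rho$ or $T$ needed.

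Next, rewrite \eqref{BGK} as $\partial_tF+v\cdot\nabla_xF+\nu F=\nu\mathcal{M}(F)$ and integrate along the backward characteristic $X(s)=x-v(t-s)$; since $\nu\ge0$ and $\mathcal{M}(F)\ge0$, the Duhamel term is nonnegative, so
\[
F(t,x,v)\ge F_0(x-vt,v)\exp\!\left(-\int_0^t\nu(\tau,x-v(t-\tau))\,d\tau\right)\ge F_0(x-vt,v)\,e^{-C_q\overbar{M}^at}
\]
by the previous step. Integrating in $v$ and invoking hypothesis \eqref{initial LB} yields $\rho(t,x)\ge C_0e^{-C_q\overbar{M}^at}$. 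Combining this with the upper bounds then gives $|U(t,x)|=|\rho U|/\rho\le C_q\overbar{M}e^{C_q\overbar{M}^at}$ and $T(t,x)=(\int F|v|^2\,dv-\rho|U|^2)/(3\rho)\le C_q\overbar{M}e^{C_q\overbar{M}^at}$ (the generic constant now allowed to depend on $C_0$), while Lemma~\ref{Pesti}(1), $\rho/T^{3/2}\le C\|F\|_{L^\infty_{x,v}}\le C\overbar{M}$, gives $T^{3/2}\ge\rho/(C\overbar{M})\ge C\overbar{M}^{-1}e^{-C_q\overbar{M}^at}$, i.e. $T(t,x)\ge C\overbar{M}^{-2/3}e^{-\frac{2}{3}C_q\overbar{M}^at}$.

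The main obstacle is the lower bound on $\rho$: it is the only estimate that uses the structural hypothesis \eqref{initial LB}, and the argument closes precisely because $\nu$ admits an a priori upper bound without any control of $\rho$ or $T$ from below — a consequence of the fact that $\rho T$ is directly controlled by the second moment of $F$, which is in turn controlled by $\overbar{M}$.
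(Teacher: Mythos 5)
Your proof is correct and follows essentially the same route as the paper: bound $\nu=\rho^{a-b}(\rho T)^b\le C_q\overbar{M}^a$ via Lemma \ref{rho,T esti}, get the lower bound on $\rho$ from the mild (Duhamel) formulation together with \eqref{initial LB}, divide to obtain the bound on $|U|$ and the upper bound on $T$, and use Lemma \ref{Pesti}(1) for the lower bound on $T$. No gaps; the argument matches the paper's proof step for step.
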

\begin{proof}
Note that the collision frequency $\nu(t,x)=\rho^a T^b$ for $a\geq b$ is bounded as
\begin{align}\label{nuesti}
\nu(t,x)=\rho^a T^b=(\rho)^{a-b}(\rho T)^b \leq C_{q} \sup_{0\leq s \leq t_*}\|F(s)\|_{L^{\infty,q}_{x,v}}^a,
\end{align}
by Lemma \ref{rho,T esti}. The  estimates for the macroscopic fields in Lemma \ref{rho,T esti} and the estimate for collision frequency in \eqref{nuesti} yields
\begin{align}\label{barMbdd}
(\rho, \rho U, 3\rho T+\rho|U|^2) \leq C_q \overbar{M} , \qquad \nu \leq C_q \overbar{M}^a.
\end{align}
(1) The lower bound of $\rho$ comes from the mild formulation of the BGK model,
\begin{align*}
F(t,x,v) &= e^{-\int_0^t \nu(\tau, x-v(t-\tau)) d\tau}F_0(x-vt,v)\cr
&\quad + \int_0^t e^{-\int_s^t \nu(\tau, x-v(t-\tau)) d\tau} \mathcal{M}(F)(s,x-v(t-s),v) ds.
\end{align*}
Combining with the upper bound of $\nu$ in \eqref{barMbdd} and using \eqref{initial LB}, we get
\begin{align}\label{rholower}
\begin{split}
\rho(t,x)=\int_{\mathbb{R}^3}F(t,x,v)dv &\geq  \int_{\mathbb{R}^3}e^{-\int_0^t \nu(\tau,x-v\tau) d\tau} F_0(x-vt,v) dv \geq  C_0 e^{-C_q\overbar{M}^at}.
\end{split}
\end{align}
(2) Applying \eqref{rholower}, we have
\begin{align*}
|U| = \frac{|\rho U|}{\rho} \leq \frac{C_q\overbar{M}}{C_0e^{-C_q\overbar{M}^at} } 
\leq C_q\overbar{M}e^{C_q\overbar{M}^at}.
\end{align*}
(3) Similar to (2), we have the following upper bound of the temperature.
\begin{align*}
|T| = \frac{3\rho T +\rho |U|^2}{3\rho} -\frac{1}{3}|U|^2  \leq \frac{C_q\overbar{M}}{3C_0e^{-C_q\overbar{M}^at} } 
\leq C_q\overbar{M}e^{C_q\overbar{M}^at} .
\end{align*}
For the lower bound of the temperature, we use (1) in Lemma \ref{Pesti}
and \eqref{rholower} to obtain
\begin{align*}
T^{\frac{3}{2}} \geq \frac{\rho}{C\|F\|_{L^{\infty}_{x,v}}} \geq \frac{C_0e^{-C_q\overbar{M}^at} }{C\overbar{M}} 
\geq \frac{C}{\overbar{M}}e^{-C_q\overbar{M}^at}.
\end{align*}
\end{proof}



\begin{lemma}\label{thetaesti} Let \eqref{initial LB} and the a priori assumption \eqref{Assumption} hold. Then the transitions of the macroscopic fields $(\rho_{\theta}$, $U_{\theta}$, $T_{\theta})$ enjoy the following estimates
\begin{align}\label{thetaesti123}
\begin{split}
&(1)~  C_0e^{-C_q\overbar{M}^at} \leq \rho_{\theta} \leq C_q \overbar{M}, \\
&(2)~  |U_{\theta}| \leq C_q \overbar{M}e^{C_q\overbar{M}^at},  \\
&(3)~ C_q\overbar{M}^{-\frac{5}{3}}e^{-\frac{5}{3}C_q\overbar{M}^at} \leq T_{\theta}  \leq C_q \overbar{M}e^{C_q\overbar{M}^at},
\end{split}
\end{align}
for $0\leq \theta \leq 1$ and some generic constant $C_q$.
\end{lemma}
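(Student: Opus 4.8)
The plan is to deduce all three bounds directly from the defining relations \eqref{transition}, the bounds on $(\rho,U,T)$ already established in Lemma \ref{macrobarM}, and the second–moment bound of Lemma \ref{rho,T esti}; no new estimate on the solution itself is required. I would first dispose of $\rho_\theta$ and $U_\theta$, which are immediate, and then concentrate on $T_\theta$.

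For (1), since $\rho_\theta=\theta\rho+(1-\theta)$ is a convex combination of $\rho$ and $1$, we have $\min\{\rho,1\}\le\rho_\theta\le\max\{\rho,1\}$; inserting $C_0e^{-C_q\overbar{M}^at}\le\rho\le C_q\overbar{M}$ from Lemma \ref{macrobarM}(1) (and, if necessary, shrinking the generic constant so that $C_0e^{-C_q\overbar{M}^at}\le 1$) gives (1). For (2), from $\rho_\theta U_\theta=\theta\rho U$ together with $\rho_\theta=\theta\rho+(1-\theta)\ge\theta\rho$ we get $|U_\theta|=\theta\rho|U|/\rho_\theta\le|U|$, so (2) follows from Lemma \ref{macrobarM}(2).

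The temperature bound is the one needing care. Using $\rho_\theta|U_\theta|^2=\theta^2\rho^2|U|^2/\rho_\theta$ and $\rho_\theta-\theta\rho=1-\theta$, the third relation in \eqref{transition} rearranges to the exact identity
\[
3\rho_\theta T_\theta=\theta\rho|U|^2\cdot\frac{1-\theta}{\rho_\theta}+3\theta\rho T+3(1-\theta),
\]
in which all three terms on the right are nonnegative. For the lower bound I would discard the first term, obtaining $\rho_\theta T_\theta\ge\theta(\rho T)+(1-\theta)\ge\min\{\rho T,1\}$; then Lemma \ref{macrobarM}(1),(3) give $\rho T\ge C_q\overbar{M}^{-2/3}e^{-\frac53 C_q\overbar{M}^at}$, and dividing by the upper bound $\rho_\theta\le C_q\overbar{M}$ yields $T_\theta\ge C_q\overbar{M}^{-5/3}e^{-\frac53 C_q\overbar{M}^at}$. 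For the upper bound I would use $\frac{1-\theta}{\rho_\theta}\le 1$ (since $\rho_\theta\ge 1-\theta$) to bound the first term by $\rho|U|^2$, and then crucially combine $\rho|U|^2+3\rho T=\int_{\R^3}|v|^2F\,dv\le C_q\overbar{M}$ from Lemma \ref{rho,T esti}; this gives $3\rho_\theta T_\theta\le C_q\overbar{M}$, and dividing by $\rho_\theta\ge C_0e^{-C_q\overbar{M}^at}$ gives the upper bound in (3).

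The only genuine obstacle is this last point: one must not estimate $\rho$, $|U|^2$ and $T$ separately inside the identity, because $|U|^2$ alone is only controlled by $\overbar{M}^2e^{C_q\overbar{M}^at}$, which would break the claimed power of $\overbar{M}$. The key observation is that the $|U|^2$ contribution is either multiplied by the bounded factor $(1-\theta)/\rho_\theta$ or absorbed into the combination $\rho|U|^2+3\rho T$, which is precisely the second moment of $F$ and hence enjoys the clean bound of Lemma \ref{rho,T esti}. Once this structure is exploited, the remainder is a routine convexity and monotonicity computation.
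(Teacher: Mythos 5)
Your proof is correct and follows essentially the same route as the paper: the same rearranged identity $3\rho_\theta T_\theta=3\theta\rho T+3(1-\theta)+\theta(1-\theta)\rho|U|^2/\rho_\theta$, the same inputs from Lemma \ref{macrobarM}, and the same key use of the combined moment bound $3\rho T+\rho|U|^2\leq C_q\overbar{M}$ rather than separate bounds on $|U|^2$ and $T$. The only (harmless) cosmetic difference is in (2), where you observe $\rho_\theta\geq\theta\rho$ to get $|U_\theta|\leq|U|$ directly, instead of the paper's division of $\theta|\rho U|\leq C_q\overbar{M}$ by the lower bound of $\rho_\theta$; both yield the stated estimate.
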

\begin{proof}
Recall the definition of the transition of the macroscopic fields in \eqref{transition}:
\begin{align*}
\rho_{\theta} &= \theta \rho +(1-\theta), \quad \rho_{\theta}U_{\theta} = \theta \rho U, \quad 3\rho_{\theta} T_{\theta}+\rho_{\theta} |U_{\theta}|^2 - 3\rho_{\theta} = \theta (3\rho T + \rho |U|^2 - 3\rho).
\end{align*}
(1) Applying the upper and lower bound of $\rho$ from $\eqref{macrobarM123}_1$ in Lemma \ref{macrobarM}, we have
\begin{align*}
\rho_{\theta} &= \theta \rho +(1-\theta) \leq  \theta C_q \overbar{M}+ (1-\theta) \leq C_q \overbar{M}, \cr
\rho_{\theta} &= \theta \rho +(1-\theta) \geq \theta C_0e^{-C_q\overbar{M}^at}+ (1-\theta) \geq C_0e^{-C_q\overbar{M}^at}.
\end{align*}
(2) Upper bound of $\rho U$ in \eqref{barMbdd} and the lower bound of $\rho_{\theta}$ $\eqref{thetaesti123}_1$ yield
\begin{align*}
|U_{\theta}| = \bigg| \frac{\theta \rho U}{\rho_{\theta}} \bigg| \leq \frac{\theta C_q \overbar{M}}{C_0e^{-C_q\overbar{M}^at}} \leq C_q \overbar{M}e^{C_q\overbar{M}^at} .
\end{align*}
(3) By the definition of $T_{\theta}$, we have
\begin{align}\label{Ttheta}
\begin{split}
3\rho_{\theta} T_{\theta}  &= \theta (3\rho T + \rho |U|^2 - 3\rho)-\rho_{\theta} |U_{\theta}|^2 + 3\rho_{\theta} \cr
&= (3\rho T+\rho |U|^2)\theta +3(1-\theta) -\rho_{\theta} |U_{\theta}|^2,
\end{split}
\end{align}
where we used
\begin{align*}
3\rho_{\theta} - 3\theta\rho =  3(\theta\rho+(1-\theta)) - 3\theta\rho = 3(1-\theta).
\end{align*}
For an upper bound of $T_{\theta}$, we apply $\eqref{thetaesti123}_1$ and upper bound of $3\rho T+\rho|U|^2$ in \eqref{barMbdd} to obtain
\begin{align*}
T_{\theta}  &= \frac{(3\rho T+\rho |U|^2)\theta +3(1-\theta) -\rho_{\theta} |U_{\theta}|^2}{3\rho_{\theta}} \leq \frac{\theta C_q \overbar{M} +3(1-\theta)}{3C_0e^{-C_q\overbar{M}^at}} \leq C_q \overbar{M}e^{C_q\overbar{M}^at}.
\end{align*}
For a lower bound of $T_{\theta}$, we substitute the following computation
\begin{align*}
\theta \rho |U|^2 -\rho_{\theta} |U_{\theta}|^2 &= \frac{\theta \rho_{\theta}\rho |U|^2 -|\rho_{\theta} U_{\theta}|^2}{\rho_{\theta}} =\frac{\theta (\theta\rho+(1-\theta))\rho |U|^2 -\theta^2|\rho U|^2}{\rho_{\theta}} =\frac{\theta(1-\theta)\rho |U|^2}{\rho_{\theta}}
\end{align*}
into \eqref{Ttheta} to obtain
\begin{align*}
3\rho_{\theta} T_{\theta} &= (3\rho T)\theta +3(1-\theta) + \frac{\theta(1-\theta)\rho |U|^2}{\rho_{\theta}}.
\end{align*}
Then we use $\eqref{thetaesti123}_1$, $\eqref{macrobarM123}_1$, and $\eqref{macrobarM123}_3$ to get
\begin{align*}
T_{\theta}  &\geq \frac{\theta \rho T +(1-\theta)}{\rho_{\theta} } \geq \frac{(C_0e^{-C_q\overbar{M}^at})(C \overbar{M}^{-\frac{2}{3}}e^{-\frac{2}{3}C_q\overbar{M}^at})}{C_q \overbar{M}} \geq C_q\overbar{M}^{-\frac{5}{3}}e^{-\frac{5}{3}C_q\overbar{M}^at}.
\end{align*}
\end{proof}

Now we are ready to estimate the nonlinear term on the R.H.S of \eqref{Rf est 2}.
\begin{lemma}\label{nonlinMbar}
	Let \eqref{initial LB} and the a priori assumption \eqref{Assumption} hold. We have the following estimate for the third term of the right-hand side of \eqref{Rf est 2}
	\begin{align*}
	&\int_{0}^{t} e^{-(t-s)} \int_{\vert v \vert \leq N} \langle v\rangle^2\vert \Gamma(f) (s, x - v(t-s),v)\vert dvds \cr
	&\leq C_q \overbar{M}^n e^{C_q\overbar{M}^at}\bigg[ N^5(1-e^{-\lambda}) \overbar{M}+\frac{\overbar{M}}{N^{q-10}}
	+ N^6 (\lambda^{-2}+N^3) \left(\mathcal{E}(F_0)+N^{\frac{3}{2}}\sqrt{\mathcal{E}(F_0)} \right)\bigg],
	\end{align*}
	for some generic constants $n>1$ and $C_q>0$.
\end{lemma}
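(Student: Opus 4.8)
The plan is to bound the nonlinear contribution by combining the explicit structure of $\Gamma = \Gamma_1 + \Gamma_2$ from Lemma \ref{linearize} with the macroscopic field bounds of Lemma \ref{macrobarM} and Lemma \ref{thetaesti}, reducing the estimate to the linear $\mathbf{P}f$-type estimate already proved in Lemma \ref{Pfesti}. The key observation is that every summand in $\Gamma_1(f)$ and $\Gamma_2(f)$ factors as a \emph{coefficient} — a rational function $\mathcal{A}_{ij}$ of $\rho_\theta, U_\theta, T_\theta$ (and a factor $\rho^aT^b$ or $\mathcal{M}(\theta)$) — times a product of two velocity moments $\langle f, e_i\rangle_v \langle f, e_j \rangle_v$, together with the prefactor $(\mathbf{P}f - f)$ in the $\Gamma_1$ case.

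First I would estimate the coefficient. Using Lemma \ref{thetaesti} for the upper and lower bounds of $(\rho_\theta, U_\theta, T_\theta)$, together with Lemma \ref{macrobarM} and the collision-frequency bound $\nu \le C_q \overbar M^a$ from \eqref{barMbdd}, every rational factor $\mathcal{A}_{ij}$ and $\rho^aT^b$ is bounded by $C_q \overbar{M}^{n_0} e^{C_q \overbar{M}^a t}$ for a suitable power $n_0 > 1$ determined by $\alpha_{ij}, \beta_{ij}$, and the degrees of $\mathcal{P}_{ij}$; for the $\mathcal{M}(\theta)$ and $\mathcal{P}_{ij}((v-U_\theta), U_\theta, T_\theta)\mathcal{M}(\theta)$ factors I would use the Gaussian decay (Lemma \ref{Pesti}(2) applied to the transitional fields) to absorb the polynomial in $v$ and bound these by $C_q \overbar M^{n_0} e^{C_q \overbar M^a t} \langle v\rangle^{-q}$ on $\{|v|\le N\}$, which in particular gives a pointwise-in-$v$ constant bound after pulling out the weight. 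Next I would estimate the two moment factors: each $\langle f, e_i\rangle_v$ is controlled using Lemma \ref{rho,T esti} by $C_q \overbar M$, so the product of the two is $\le C_q \overbar M^2$; and the prefactor $(\mathbf{P}f - f)$ appearing in $\Gamma_1$ contributes, after integrating $\langle v\rangle^2 |\mathbf{P}f - f|$ over $\{|v|\le N\}$ along characteristics, \emph{exactly} the same structure as the left side of Lemma \ref{Pfesti} (the $f$-part is handled by the same $I, II_1, II_2$ splitting, the $\mathbf{P}f$-part is Lemma \ref{Pfesti} verbatim). Collecting: the $\Gamma_2$ term is bounded by $C_q \overbar M^{n_0 + 2} e^{C_q \overbar M^a t}$ times $\int_0^t e^{-(t-s)} \int_{|v|\le N} \langle v\rangle^{-q+2}\, dv\, ds \le C_q$, which already gives a bound of the stated form (indeed a stronger one, with no $\mathcal{E}(F_0)$ needed for $\Gamma_2$), while the $\Gamma_1$ term, carrying the extra $(\mathbf{P}f-f)$ factor, inherits the full right-hand side of Lemma \ref{Pfesti} multiplied by the coefficient bound $C_q \overbar M^{n_0} e^{C_q \overbar M^a t}$ and one more factor $C_q \overbar M$ from the single moment $\langle f, e_i\rangle_v$ in $\Gamma_1$. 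Setting $n = n_0 + 2$ (or the larger of the two exponents) yields the claimed inequality.

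The main obstacle is bookkeeping the negative powers of $\rho_\theta$ and $T_\theta$: since the lower bounds in Lemma \ref{thetaesti} degrade like $\overbar M^{-5/3} e^{-\frac53 C_q \overbar M^a t}$ in $T_\theta$ (and similarly in $\rho_\theta$), a coefficient $\rho_\theta^{-\alpha_{ij}} T_\theta^{-\beta_{ij}}$ produces growth $\overbar M^{C(\alpha_{ij}+\beta_{ij})} e^{C_q(\alpha_{ij}+\beta_{ij})\overbar M^a t}$, so I must verify that the exponents $\alpha_{ij}, \beta_{ij}$ from Appendix \ref{Nonlincomp} are finite and that the resulting power $n$ and exponential rate $C_q \overbar M^a$ can be absorbed into a \emph{single} generic constant $C_q$ and exponent $n$ independent of $i,j$; this is exactly why the crude bound has $\overbar M^n e^{C_q \overbar M^a t}$ rather than something $\overbar M$-linear, and it is the reason the proposition only controls $|\int \langle v\rangle^2 f\, dv|$ for $t \le t_*$ with $t_*$ prescribed. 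A secondary technical point is handling the $\mathcal{M}(\theta)$ factor when $T_\theta$ is small: the Gaussian $e^{-|v-U_\theta|^2/2T_\theta}$ could concentrate, but on $\{|v|\le N\}$ and with $T_\theta$ bounded below by Lemma \ref{thetaesti}(3) one still has $\mathcal{M}(\theta) \le \rho_\theta / (2\pi T_\theta)^{3/2} \le C_q \overbar M^{n_0} e^{C_q \overbar M^a t}$ pointwise, which suffices.
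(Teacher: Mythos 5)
There is a genuine gap in your treatment of $\Gamma_2$. You bound \emph{both} moment factors $\langle f,e_i\rangle_v\langle f,e_j\rangle_v$ by $C_q\overbar M^2$ and then integrate the coefficient in $v$ and $s$, arriving at a bound of the form $C_q\overbar M^{\,n_0+2}e^{C_q\overbar M^a t}$ with no dependence on $N,\lambda,\mathcal{E}(F_0)$, and you call this ``a stronger bound.'' It is not: the right-hand side of the lemma can be made arbitrarily small (first $N$ large, then $\lambda$ small, then $\mathcal{E}(F_0)$ small — this is exactly how it is consumed in the proof of Proposition \ref{macrodecay} to get the $3\delta/4$ smallness), whereas your constant $C_q\overbar M^{\,n_0+2}e^{C_q\overbar M^a t}$ is a fixed large quantity. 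Hence your $\Gamma_2$ estimate neither implies the stated inequality nor serves the purpose of the lemma; discarding both moments destroys the only smallness mechanism available, which is the entropy/velocity-averaging control of $\int |f|\,\langle u\rangle^2\,du$ along characteristics.

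The paper's proof keeps exactly one of the two moments unestimated: after bounding the coefficient $\big[\nabla^2\mathcal{M}(\theta)\big]_{ij}$, the factor $\rho^aT^b$, and \emph{one} moment $\langle f,e_i\rangle_v\le C_q\overbar M$ via Lemmas \ref{macrobarM}, \ref{thetaesti} (your coefficient estimates here agree with the paper), it retains the pointwise claim
\begin{align*}
\langle v\rangle^2|\Gamma(f)(t,x,v)|\le C_q\overbar M^{\,n}e^{C_q\overbar M^a t}\int_{\mathbb{R}^3}|f(t,x,u)|(1+|u|^2)\,du ,
\end{align*}
so that the time–velocity integral of $\Gamma(f)$ along characteristics reduces to the same object treated in Lemma \ref{Pfesti} (splitting $[t-\lambda,t]$ versus $[0,t-\lambda]$, $|u|\ge 2N$ versus $|u|\le 2N$, the change of variables $y=x-v(t-s)$, and the entropy inequality of Lemma \ref{L2 control}), which is precisely where the $N^5(1-e^{-\lambda})\overbar M$, $\overbar M N^{-(q-10)}$, and $\mathcal{E}(F_0)$ terms come from. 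Your handling of $\Gamma_1$ (retaining $(\mathbf{P}f-f)$ under the characteristic integral and reusing the Lemma \ref{Pfesti} machinery, while bounding $\langle f,e_i\rangle_v\le C_q\overbar M$) does preserve this mechanism and is a workable variant of what the paper does for $\Gamma_1$; to repair the proof you must do the analogous thing for $\Gamma_2$, i.e.\ crude-bound only one of the two moments and run the Lemma \ref{Pfesti} argument on the other.
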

\begin{proof}
In this proof, we claim the following estimate:
\begin{align} \label{claim}
\langle v \rangle^{2}\vert \Gamma(f)(t,x,v)\vert \leq  C_q \overbar{M}^ne^{C_q\overbar{M}^at}\int_{\mathbb{R}^3} \vert f(u) \vert (1+|u|^2) du,
\end{align}
for some generic constants $n>1$ and $C_q>0$. \\
Recall definition of the full nonlinear term in Lemma \ref{linearize}. We first consider the nonlinear term $\Gamma_2(f)$ which contains some second derivative terms of the local Maxwellian. Note that the second derivative of the local Maxwellian can be written by the following polynomial form \eqref{polyform}:
\begin{align*}
\left[\nabla_{(\rho_{\theta},\rho_{\theta} U_{\theta}, G_{\theta})}^2 \mathcal{M}(\theta)\right]_{ij} = \frac{\mathcal{P}_{ij}((v-U_{\theta}),U_{\theta},T_{\theta})}{\rho_{\theta}^{\alpha_{ij}}T_{\theta}^{\beta_{ij}}}\mathcal{M}(\theta),
\end{align*}
where $\mathcal{P}(x_1,\cdots,x_n)$ is generic polynomial for $x_1,\cdots ,x_n$. Applying $\langle v \rangle^{2} \leq 1+ |v-U|^2 + |U|^2 $, we get
\begin{align}\label{nonlinsubsti}
\begin{split}
\bigg|\langle v \rangle^{2}\left[\nabla_{(\rho_{\theta},\rho_{\theta} U_{\theta}, G_{\theta})}^2 \mathcal{M}(\theta)\right]_{ij}\bigg| &\leq C\bigg|(1+ |v-U_{\theta}|^2 + |U_{\theta}|^2)\frac{\mathcal{P}_{ij}((v-U_{\theta}),U_{\theta},T_{\theta})}{\rho_{\theta}^{\alpha_{ij}}T_{\theta}^{\beta_{ij}}}\mathcal{M}(\theta)\bigg| \cr
&\leq C \bigg|(1+ T_{\theta} + |U_{\theta}|^2) \frac{\mathcal{P}_{ij}(\sqrt{T_{\theta}},U_{\theta},T_{\theta})}{\rho_{\theta}^{\alpha_{ij}}T_{\theta}^{\beta_{ij}}}\bigg|,
\end{split}
\end{align}
where we used the following inequality
\begin{align*}
\bigg|\frac{(v-U)^n}{T^{\frac{n}{2}}}\exp\left(-\frac{|v-U|^2}{2T}\right)\bigg| \leq C,
\end{align*}
to control the $(v-U)$ part on the numerator part. Then, putting $(1+ T_{\theta} + |U_{\theta}|^2)$ term on the generic polynomial $\mathcal{P}(\sqrt{T_{\theta}},U_{\theta},T_{\theta})$, we apply Lemma \ref{thetaesti} to estimate the transition of the macroscopic fields ($\rho_{\theta},U_{\theta},T_{\theta}$):
\begin{align*}
\bigg|\langle v \rangle^{2}\left[\nabla_{(\rho_{\theta},\rho_{\theta} U_{\theta}, G_{\theta})}^2 \mathcal{M}(\theta)\right]_{ij}\bigg|&\leq C \bigg|\frac{\mathcal{P}_{ij}(\sqrt{C_q \overbar{M}e^{C_q\overbar{M}^at}},C_q \overbar{M}e^{C_q\overbar{M}^at},C_q \overbar{M}e^{C_q\overbar{M}^at})}{(C_0e^{-C_q\overbar{M}^at})^{\alpha_{ij}}(C_q\overbar{M}^{-\frac{5}{3}}e^{-\frac{5}{3}C_q\overbar{M}^at})^{\beta_{ij}}}\bigg|.
\end{align*}
Thus, using Appendix \ref{Nonlincomp}, there exists a positive constant $n$ such that
\begin{align}\label{nonlinupper}
\bigg|\left[\nabla_{(\rho_{\theta},\rho_{\theta} U_{\theta}, G_{\theta})}^2 \mathcal{M}(\theta)\right]_{ij}\bigg|
&\leq C_q \overbar{M}^ne^{C_q\overbar{M}^at}.
\end{align}
From now on, we use the positive number $n$ as a generic positive constant. By using the estimate of the collision frequency $\nu=\rho^aT^b\leq C_q\overbar{M}^a$ in \eqref{nuesti} and
\begin{align*}
\langle f,e_i \rangle_{v} \leq C_q \| f(t)\|_{L^{\infty,q}_{x,v}} \leq C_q\overbar{M},
\end{align*}
we can bound the nonlinear term $\Gamma_2$ as
\begin{align} \label{gamma2 est}
 \Gamma_2(f) 
\leq  C_q \overbar{M}^ne^{C_q\overbar{M}^at}\int_{\mathbb{R}^3} \vert f(u)\vert (1+|u|^2) du .
\end{align}
Similarly, applying the estimates for $(\rho_{\theta},U_{\theta},T_{\theta})$ in Lemma \ref{thetaesti}, the nonlinear part of the collision frequency $A_i(\theta)$ in Lemma \ref{linearize} can be bounded as
\begin{align*}
A_i(\theta)&\leq C_q \overbar{M}^ne^{C_q\overbar{M}^at}, \quad \mbox{for} \quad i=1,\cdots,5.
\end{align*}
Combining with the estimate of $\mathbf{P}f-f$,
\begin{align*}
\|(\mathbf{P}f-f)(t)\|_{L^{\infty,q}_{x,v}} \leq  \| f(t)\|_{L^{\infty,q}_{x,v}} \leq \overbar{M},
\end{align*}
we also have
\begin{align} \label{gamma1 est}
\Gamma_1(f)  
\leq  C_q \overbar{M}^ne^{C_q\overbar{M}^at}\int_{\mathbb{R}^3} \vert f(u) \vert (1+|u|^2) du .
\end{align}
From \eqref{gamma1 est} and \eqref{gamma2 est}, we obtain \eqref{claim}. Now, applying \eqref{claim} yields
\begin{align*}
&\int_{0}^{t} e^{-(t-s)} \int_{\vert v \vert \leq N} \langle v\rangle^2\vert \Gamma(f) (s, x - v(t-s),v)\vert dvds \cr
&\leq C_q \overbar{M}^n e^{C_q\overbar{M}^at} \int_{0}^{t} e^{-(t-s)}  \int_{\vert v \vert \leq N} \langle v\rangle^2 \int_{\mathbb{R}^3} \vert f(s,x-v(t-s),u)\vert (1+|u|^2) du  dvds.
\end{align*}
We note that the R.H.S is the same with the estimate of $\mathbf{P}f$ in Lemma \ref{Pfesti} except the term $C_q \overbar{M}^n e^{C_q\overbar{M}^at}$. Thus we finish the proof.
\end{proof}

\hide
\begin{align*}
3-\delta -\rho|U|^2  \leq 3\rho T\leq 3+\delta -\rho|U|^2
\end{align*}
\begin{align*}
\frac{3-\delta -\delta(1+\delta)}{3\rho}  \leq  T\leq \frac{3+\delta}{3\rho}
\end{align*}
\unhide

Now we go back to the proof of Proposition \ref{macrodecay}.
\begin{proof} [Proof of Proposition \ref{macrodecay}]
Combining Lemma \ref{termdivide}, Lemma \ref{Pfesti} and Lemma \ref{nonlinMbar}, we obtain
\begin{align*}
\int_{\mathbb{R}^3} \langle v\rangle^2\vert f(t,x,v)\vert dv &\leq C_{q}M_0e^{-t}+ \frac{C_q}{N^{q-5}} \overbar{M}  + C_q \overbar{M}^n e^{C_q\overbar{M}^at}\bigg[ N^5(1-e^{-\lambda}) \overbar{M}+\frac{\overbar{M}}{N^{q-10}} \cr
&\quad + N^6 (\lambda^{-2}+N^3) \left(\mathcal{E}(F_0)+N^{\frac{3}{2}}\sqrt{\mathcal{E}(F_0)} \right)\bigg].
\end{align*}
By using the generic constants $C_q$ and $n$, for the time $t_*$ which satisfies \eqref{Assumption}, we can write the above inequality as
\begin{align}\label{makesmall}
\begin{split}
\int_{\mathbb{R}^3} \langle v\rangle^2\vert f(t,x,v) \vert dv &\leq C_{q}M_0e^{-t}+ \frac{C_q}{N^{q-5}} \overbar{M} +C_q \overbar{M}^n e^{C_q\overbar{M}^at_*} \frac{1}{N^{q-10}} \cr
&\quad + C_q \overbar{M}^n e^{C_q\overbar{M}^at_*}N^5(1-e^{-\lambda}) \cr
&\quad + C_q \overbar{M}^n e^{C_q\overbar{M}^at_*}N^6 (\lambda^{-2}+N^3) \left(\mathcal{E}(F_0)+N^{\frac{3}{2}}\sqrt{\mathcal{E}(F_0)} \right).
\end{split}
\end{align}
Then we choose $N,\lambda,\mathcal{E}(F_0)$ to make the second to the fifth terms of the R.H.S in \eqref{makesmall} sufficiently small.
First, for a given $\delta\in(0,1)$, let us choose a constant $N$ sufficiently large as follows:
\begin{align} \label{choose N}
N:= \max\left\{ \left(\frac{8}{\delta}C_q\overbar{M}\right)^{\frac{1}{q-5}}, \left(\frac{8}{\delta}C_q \overbar{M}^n e^{C_q\overbar{M}^at_*}\right)^{\frac{1}{q-10}} \right\},\quad q > 10.
\end{align}
Then the second and third terms on the R.H.S of \eqref{makesmall} become smaller than $\delta/4$.
Now, for $N$ which was chosen in \eqref{choose N}, we choose sufficiently small $\lambda $ as
\begin{align*}
\lambda := -\ln \left(1-\frac{\delta}{4C_q \overbar{M}^n e^{C_q\overbar{M}^aT}N^5}\right),
\end{align*}
to make the second line of the R.H.S of \eqref{makesmall} small:
\begin{align*}
C_q \overbar{M}^n e^{C_q\overbar{M}^at_*}N^5(1-e^{-\lambda}) = \frac{\delta}{4}.
\end{align*}
Finally, choosing sufficiently small initial entropy satisfying
\begin{align*}
\mathcal{E}(F_0) \leq \min\left\{\frac{\delta}{8C_q \overbar{M}^n e^{C_q\overbar{M}^at_*}N^6 (\lambda^{-2}+N^3)}, \left(\frac{\delta}{8C_q \overbar{M}^n e^{C_q\overbar{M}^at_*}N^{\frac{15}{2}} (\lambda^{-2}+N^3)}\right)^2 \right\},
\end{align*}
the third line on the R.H.S of \eqref{makesmall} become smaller than $\delta/4$:
\begin{align*}
C_q \overbar{M}^n e^{C_q\overbar{M}^at_*}N^6 (\lambda^{-2}+N^3) \left(\mathcal{E}(F_0)+N^{\frac{3}{2}}\sqrt{\mathcal{E}(F_0)} \right) \leq \frac{\delta}{4},
\end{align*}
which implies
\Be \notag
\left \vert \int_{\mathbb{R}^3} \langle v\rangle^2f(t,x,v) dv  \right \vert \leq C_qM_0e^{-t}  + \frac{3}{4}\delta.
\Ee
This completes the proof of Proposition \ref{macrodecay}.
\end{proof}

\begin{lemma}\label{macrodelta} Under a priori assumption \eqref{Assumption} with sufficiently small $\mathcal{E}(F_0$) satisfying the assumption of Proposition \ref{macrodecay}, there exists $t_{eq}$ such that if $t\geq t_{eq}$ then the macroscopic fields are close to the global macroscopic fields for any $\delta\in(0,1/3)$:
\begin{align*}
|\rho-1|~, |U|,~ |T-1| \leq 2\delta.
\end{align*}
\end{lemma}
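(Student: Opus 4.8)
The plan is to deduce the claim directly from Proposition \ref{macrodecay} by reading the deviations of $(\rho,U,T)$ from $(1,0,1)$ as velocity moments of $f$. From the definitions in \eqref{rhoUG} one has the exact identities
\[
\rho - 1 = \int_{\mathbb{R}^3} f\,dv, \qquad \rho U = \int_{\mathbb{R}^3} v f\,dv, \qquad 3\rho T - 3\rho = \int_{\mathbb{R}^3} (|v|^2 - 3)f\,dv \;-\; \rho|U|^2,
\]
so each deviation is bounded by a moment of $|f|$ against $1$, $|v|$ or $|v|^2$, and all three of these are dominated by the single quantity $\int_{\mathbb{R}^3}\langle v\rangle^2|f(t,x,v)|\,dv$ (for the last one using $\big||v|^2-3\big|\le 3\langle v\rangle^2$). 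Hence it suffices to make this quantity uniformly small in $x$ for $t$ large.

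The observation that makes this work is that, although Proposition \ref{macrodecay} is stated only for the signed moment $\big|\int\langle v\rangle^2 f\,dv\big|$, its proof in fact delivers the absolute-value bound $\int_{\mathbb{R}^3}\langle v\rangle^2|f(t,x,v)|\,dv \le C_q M_0 e^{-t} + \tfrac34\delta$ on $[0,t_*]\times\mathbb{T}^3$ — this is precisely inequality \eqref{makesmall} before the absolute value is discarded. I would invoke this with $\delta$ replaced by $\delta/2$ (allowed since $\delta<1/3<1$, at the cost of shrinking the admissible $\mathcal{E}(F_0)$ to $\mathcal{E}(\overbar{M},t_*,\delta/2)$), and then set $t_{eq}=t_{eq}(M_0,\delta)$ large enough that $C_q M_0 e^{-t_{eq}}\le \delta/8$. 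This yields $\int_{\mathbb{R}^3}\langle v\rangle^2|f(t,x,v)|\,dv \le \delta/2$ for every $t\ge t_{eq}$ and every $x$.

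Feeding this into the three identities then finishes the argument: $|\rho-1|\le \delta/2\le 2\delta$, which in particular gives the density lower bound $\rho>1/2$; next $|U| = |\rho U|/\rho \le 2\int_{\mathbb{R}^3}|v||f|\,dv \le \delta$; and finally $|3\rho T - 3\rho| \le 3\int_{\mathbb{R}^3}\langle v\rangle^2|f|\,dv + \rho|U|^2 \le \tfrac32\delta + O(\delta^2)$, so dividing by $3\rho>3/2$ gives $|T-1|\le 2\delta$ for $\delta<1/3$. I do not anticipate a genuine obstacle here — the lemma is essentially a corollary — the only points needing care being: one must use the absolute-value estimate hidden in \eqref{makesmall} rather than the weaker signed statement of Proposition \ref{macrodecay}; the temperature bound should be asserted only after $|\rho-1|$ has been shown small enough to legitimize dividing by $3\rho$; and one should apply Proposition \ref{macrodecay} with a slightly reduced parameter and absorb the transient $C_q M_0 e^{-t}$ into the choice of $t_{eq}$, so as to reach the stated constant $2\delta$ with a margin.
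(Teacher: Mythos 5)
Your proposal is correct and follows essentially the same route as the paper: invoke Proposition \ref{macrodecay}, choose $t_{eq}$ so the transient $C_qM_0e^{-t}$ is absorbed, read off $|\rho-1|$, $|\rho U|$ and the second-moment deviation from the weighted moment bound, and then divide by $\rho$ to control $|U|$ and $|T-1|$, exactly as in the paper's proof (which uses $t_{eq}=\ln\frac{4C_qM_0}{\delta}$ and the identity $T=\frac{3\rho T+\rho|U|^2}{3\rho}-\frac{|\rho U|^2}{3\rho^2}$ in place of your $3\rho T-3\rho$ identity). Your remark that one must use the bound with $|f|$ inside the integral, as delivered by \eqref{makesmall}, rather than the signed statement of the proposition, is a valid and careful observation about a point the paper passes over implicitly.
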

\begin{proof}
From Proposition \ref{macrodecay}, let us choose sufficiently large time
\begin{align*}
t\geq \ln \frac{4C_qM_0}{\delta},
\end{align*}
for which the followings hold:
\Be \notag
\left \vert \int_{\mathbb{R}^3} \langle v\rangle^2f(t,x,v) dv  \right \vert \leq \delta.
\Ee
This is equivalent to
\begin{align*}
|\rho-1|,~|\rho U|,~|3\rho T+\rho|U|^2-3| \leq \delta.
\end{align*}
Then the macroscopic velocity and temperature are bounded by
\begin{align*}
|U(t,x)| &= \frac{|\rho U|}{\rho} \leq \frac{\delta}{1-\delta},\quad
T(t,x) \leq \frac{3+\delta}{3(1-\delta)}, \quad
T(t,x) \geq \frac{3-\delta}{3(1+\delta)} - \frac{\delta^2}{3(1-\delta)^2} ,
\end{align*}
where we used the relation $T=\frac{3\rho T+\rho|U|^2}{3\rho}-\frac{|\rho U|^2}{3\rho^2}$. Once we consider the quantity $|T-1|$, then we have
\begin{align*}
|U| \leq \frac{\delta}{1-\delta}, \qquad |T-1| \leq  \max\left\{\frac{4\delta}{3(1-\delta)},\frac{4\delta}{3(1+\delta)}+ \frac{\delta^2}{3(1-\delta)^2} \right\} .
\end{align*}
Thus for $\delta \leq 1/3$, we have
\begin{align*}
|\rho-1|,~|U|,~|T-1| \leq 2\delta.
\end{align*}
\end{proof}
For later convenience, we define the time satisfying Lemma \ref{macrodelta} as
\begin{align} \label{def teq}
t_{eq}:= \ln \frac{4C_qM_0}{\delta}.
\end{align}
Note that $t_{eq}$ depends only on the initial data, and fixed constants $q>10$ and $\delta$. After this time, the nonlinear term $\Gamma$ becomes quadratic nonlinear. We will consider the problem after $t_{eq}$ in Section \ref{secsmall}. Now, our main problem is to construct the solution before $t_{eq}$.

\section{Local existence theory}
In this section, we consider the local-in-time unique solution of the BGK equation.
\begin{lemma}\label{nonlinfg} If the two distribution functions $F=\mu+f$ and $G=\mu+g$ satisfy $\|F(t)\|_{L^{\infty,q}_{x,v}} \leq M$, and $\|G(t)\|_{L^{\infty,q}_{x,v}} \leq M$, for a constant $M>0$, and the macroscopic fields of $F$ and $G$ satisfy Lemma \ref{macrobarM} for $M$ instead of $\overbar{M}$, respectively, then we have
	\begin{align*}
	\| (\Gamma(f)-\Gamma(g))(t) \|_{L^{\infty,q}_{x,v}} \leq C_{M}\|(f-g)(t)\|_{L^{\infty,q}_{x,v}},
	\end{align*}
	for a positive constant $C_{M}$.
\end{lemma}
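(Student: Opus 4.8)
The strategy is to exploit the multilinear‑in‑$f$ structure of $\Gamma=\Gamma_1+\Gamma_2$ recorded in Lemma~\ref{linearize}. Every summand of $\Gamma(f)$ is a finite product whose factors are of one of the following types: a moment pairing $\langle f,e_i\rangle_v$; a coefficient depending on the macroscopic fields only, namely the frequency $\rho^aT^b$, the functions $A_i(\theta)$ of~\eqref{A12}, or the rational function $\mathcal{P}_{ij}((v-U_{\theta}),U_{\theta},T_{\theta})/(\rho_{\theta}^{\alpha_{ij}}T_{\theta}^{\beta_{ij}})$; the local Maxwellian $\mathcal{M}(\theta)$; and the transport factor $\mathbf{P}f-f$. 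For a product one has the telescoping identity $\prod_k a_k^F-\prod_k a_k^G=\sum_k\big(\prod_{l<k}a_l^G\big)\,(a_k^F-a_k^G)\,\big(\prod_{l>k}a_l^F\big)$, so it suffices to show that each factor is (i) bounded by a constant $C_M$ depending on $M$ and $t$, after multiplication by $\langle v\rangle^q$ when it carries $v$‑dependence, and (ii) Lipschitz in the perturbation, i.e. its $F$‑version minus its $G$‑version has $L^{\infty}_{x,v}$ (resp. weighted) norm $\le C_M\|(f-g)(t)\|_{L^{\infty,q}_{x,v}}$.

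First I would establish the Lipschitz dependence of the macroscopic fields. From~\eqref{rhoUG}, $\rho_F-\rho_G=\int(f-g)\,dv$, $\rho_FU_F-\rho_GU_G=\int v(f-g)\,dv$ and $G_F-G_G=\int\frac{|v|^2-3}{\sqrt6}(f-g)\,dv$, so Lemma~\ref{rho,T esti} (weighted $L^\infty$ controls the low moments for $q>10$) gives $|\rho_F-\rho_G|,\ |\rho_FU_F-\rho_GU_G|,\ |G_F-G_G|\le C_q\|(f-g)(t)\|_{L^{\infty,q}_{x,v}}$. Combining this with the reverse relations $U=\rho U/\rho$, $T=\sqrt{2/3}\,G/\rho-|\rho U|^2/(3\rho^2)+1$ and the two‑sided bounds of Lemma~\ref{macrobarM} (crucially the lower bound $\rho\ge C_0e^{-C_qM^at}$) yields $|U_F-U_G|,\ |T_F-T_G|\le C_M\|(f-g)(t)\|_{L^{\infty,q}_{x,v}}$. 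The affine/rational definitions~\eqref{transition} then transfer the same Lipschitz bound to $(\rho_{\theta},U_{\theta},T_{\theta})$ uniformly for $\theta\in[0,1]$, with the lower bounds on $\rho_{\theta}$ and $T_{\theta}$ supplied by Lemma~\ref{thetaesti}.

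Next I would treat the coefficients, the Maxwellian and the moments. By the explicit formulas of~\eqref{A12} and Appendix~\ref{Nonlincomp}, $\rho^aT^b$, $A_i(\theta)$ and $\mathcal{P}_{ij}/(\rho_{\theta}^{\alpha_{ij}}T_{\theta}^{\beta_{ij}})$ are smooth functions of $(\rho,U,T)$ (resp. $(\rho_{\theta},U_{\theta},T_{\theta})$) on $\{\rho>0,\,T>0\}$; on the region cut out by Lemmas~\ref{macrobarM}--\ref{thetaesti} both they and their gradients are $\le C_M$, so integrating the gradient along the segment joining the $F$‑ and $G$‑parameter triples gives their Lipschitz bounds, the $(v-U_{\theta})$‑dependence in $\mathcal{P}_{ij}$ being absorbed by the accompanying $\mathcal{M}(\theta)$ via $|(v-U)^mT^{-m/2}e^{-|v-U|^2/(2T)}|\le C$ exactly as in~\eqref{nonlinsubsti}. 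For the Maxwellian, writing $\mathcal{M}(\theta)$ as a function of $(\rho_{\theta},U_{\theta},T_{\theta})$ and integrating $\nabla_{(\rho_{\theta},U_{\theta},T_{\theta})}\mathcal{M}(\theta)$ along that segment, each component is again a Gaussian times a polynomial in $(v-U_{\theta})/\sqrt{T_{\theta}}$ with $C_M$‑controlled coefficients, hence $\langle v\rangle^q|\mathcal{M}_F(\theta)-\mathcal{M}_G(\theta)|\le C_M(|\rho_{\theta,F}-\rho_{\theta,G}|+|U_{\theta,F}-U_{\theta,G}|+|T_{\theta,F}-T_{\theta,G}|)\le C_M\|(f-g)(t)\|_{L^{\infty,q}_{x,v}}$, while $\langle v\rangle^q\mathcal{M}(\theta)\le C_M$ by Lemma~\ref{Pesti}(2). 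Finally $\langle v\rangle^q|\mathbf{P}f-\mathbf{P}g|\le C_q\|(f-g)(t)\|_{L^{\infty,q}_{x,v}}$ since $\langle v\rangle^qe_i\mu$ is bounded, and $|\langle f,e_i\rangle_v|\le C_qM$, $|\langle f-g,e_i\rangle_v|\le C_q\|(f-g)(t)\|_{L^{\infty,q}_{x,v}}$.

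Assembling, one substitutes these bounds into the telescoping identity for each summand of $\Gamma_1(f)-\Gamma_1(g)$ and $\Gamma_2(f)-\Gamma_2(g)$, takes the $\theta$‑integrals (uniformly bounded integrands on $[0,1]$), multiplies by $\langle v\rangle^q$ and takes the supremum over $(x,v)$, obtaining the claim with $C_M$ depending on $M$ and on $t$. I expect the delicate point to be the weighted estimate of the Maxwellian difference $\langle v\rangle^q|\mathcal{M}_F(\theta)-\mathcal{M}_G(\theta)|$: Lemma~\ref{thetaesti} only confines $T_{\theta}$ to $[\,C_qM^{-5/3}e^{-(5/3)C_qM^at},\,C_qMe^{C_qM^at}\,]$, so one must verify that along the interpolating segment the temperature stays in a comparable interval and that $\mathcal{M}(\theta)$ with such a temperature still dominates $\langle v\rangle^q$ uniformly --- which is precisely the $|(v-U)^mT^{-m/2}e^{-|v-U|^2/(2T)}|\le C$ device, at the cost of a constant degenerating as $t\to\infty$; this is harmless since the lemma is only invoked on a short time interval in the local existence argument of this section.
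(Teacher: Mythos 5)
Your proposal is correct and follows essentially the same route as the paper: a slot-by-slot telescoping decomposition of $\Gamma_1,\Gamma_2$, boundedness and Lipschitz estimates for the field-dependent coefficients via mean-value arguments on the region cut out by the two-sided bounds of Lemmas \ref{macrobarM} and \ref{thetaesti} (with the lower bound on $\rho$ used exactly as you indicate), the device $|(v-U)^mT^{-m/2}e^{-|v-U|^2/(2T)}|\leq C$ to absorb the weight, and the Lipschitz continuity of the weighted local Maxwellian (which the paper cites from Perthame and Yun rather than re-deriving by integrating the gradient along the parameter segment, as you do). The only cosmetic difference is that the paper applies the mean value theorem directly to $\rho^{a-b}$ and $(\rho T)^{b}$ and rewrites $\rho^fT^f-\rho^gT^g$ through moments of $f-g$, instead of first establishing Lipschitz dependence of $(\rho,U,T)$ themselves; both routes yield the same constant $C_M$ (time-dependent through the bounds of Lemma \ref{macrobarM}, harmless on the local time interval).
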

\begin{proof}
We denote the macroscopic fields of $F=\mu+f$ as $(\rho^f,U^f,T^f)$:
\begin{align*}
&\rho^f(t,x):= \int_{\mathbb{R}^3} F(t,x,v) dv, \\
&\rho^f(t,x)U^f(t,x):= \int_{\mathbb{R}^3} F(t,x,v) v dv, \\
&3\rho^f(t,x)T^f(t,x) := \int_{\mathbb{R}^3} F(t,x,v) \vert v - U^f(t,x) \vert^2 dv.
\end{align*}
We also write the transition of the macroscopic fields comes from the definition \eqref{transition} as $(\rho_{\theta}^f,U_{\theta}^f,T_{\theta}^f)$ and the local Maxwellian depending on the macroscopic fields $(\rho_{\theta}^f,U_{\theta}^f,T_{\theta}^f)$ as $\mathcal{M}^f(\theta)$.
Here, we only consider the case $\Gamma_2(f)$, since $\Gamma_{1}(f)$ can be treated similarly.
We split the function dependency as follows:
	\begin{align*}
	\Gamma_2(f_1,f_2,f_3,f_4) &= (\rho^{f_1})^a (T^{f_1})^b\sum_{1\leq i,j\leq 5}\int_0^1 \frac{\mathcal{P}_{ij}((v-U_{\theta}^{f_2}),U_{\theta}^{f_2},T_{\theta}^{f_2})}{(\rho_{\theta}^{f_2})^{\alpha_{ij}}(T_{\theta}^{f_2})^{\beta_{ij}}}\mathcal{M}^{f_2}(\theta)  (1-\theta) d\theta \cr
	&\quad \times \int_{\mathbb{R}^3}f_3 e_i dv \int_{\mathbb{R}^3}f_4 e_jdv.
	\end{align*}
	By the triangle inequality, we have
	\begin{align*}
	\langle v \rangle^q(\Gamma_2(f)-\Gamma_2(g)) &=\langle v \rangle^q(\Gamma_2(f,f,f,f)-\Gamma_2(f,f,f,g)) +\langle v \rangle^q(\Gamma_2(f,f,f,g)-\Gamma_2(f,f,g,g)) \cr
	& \quad +\langle v \rangle^q(\Gamma_2(f,f,g,g)-\Gamma_2(f,g,g,g)) +\langle v \rangle^q(\Gamma_2(f,g,g,g)-\Gamma_2(g,g,g,g)) \cr
	&:= I+II+III+IV.
	\end{align*}
	Applying the same estimate in \eqref{nonlinupper} with
	\begin{align*}
	\langle v \rangle^q \mathcal{M}^f(\theta) \leq \left(1+|v-U_{\theta}^f|^q+ |U_{\theta}^f|^q\right)\mathcal{M}^f(\theta),
	\end{align*}
	we obtain
	\begin{align*}
	I+II \leq C_{M} \int_{\mathbb{R}^3} (1+|v|+|v|^2)|f-g|dv \leq C_{M}\|f-g\|_{L^{\infty,q}_{x,v}},
	\end{align*}
because the macroscopic fields $(\rho,U,T)$ and $(\rho_{\theta},U_{\theta},T_{\theta})$ for $f$ and $g$ are bounded with respect to $M$.
Before we estimate $III$ and $IV$, we consider the following two inequalities: For any real numbers $\alpha,\beta \in \mathbb{R}$,
by the mean value theorem, there exists $\rho_c$ with $\min\{\rho^f,\rho^g\} \leq \rho_{c} \leq \max\{\rho^f,\rho^g\}$ such that
\begin{align}\label{rhopower}
(\rho^f)^{\alpha} -(\rho^g)^{\alpha} = \alpha\rho_c^{\alpha-1}(\rho^f-\rho^g)\leq C_{M}\int_{\mathbb{R}^3}|f-g|dv \leq C_{M}\|f-g\|_{L^{\infty,q}_{x,v}}.
\end{align}
Similarly, there exists $(\rho T)_c$ with $\min\{\rho^fT^f,\rho^gT^g\} \leq (\rho T)_c \leq \max\{\rho^fT^f,\rho^gT^g\}$ such that
\begin{align}\label{rhoTpower}
(\rho^fT^f)^{\beta}-(\rho^gT^g)^{\beta} = \beta(\rho_cT_c)^{\beta-1}(\rho^fT^f-\rho^gT^g) \leq C_{M}|\rho^fT^f-\rho^gT^g|.
\end{align}
Now we consider the fourth term $IV$. Similar to the estimate of $I$ and $II$, we have
\begin{align*}
IV \leq C_{M}((\rho^f)^a (T^f)^b-(\rho^g)^a (T^g)^b).
\end{align*}
We apply the triangle inequality and split the terms as follows:
\begin{align*}
IV 
&\leq C_{M}\left\{ ((\rho^f)^{a-b} -(\rho^g)^{a-b}) (\rho^gT^g)^b+ (\rho^f)^{a-b} ((\rho^fT^f)^b-(\rho^gT^g)^b)\right\} \cr
&= IV_1 + IV_2.
\end{align*}
Then \eqref{rhopower} and \eqref{rhoTpower} guarantee $IV_1\leq C_{M}\|f-g\|_{L^{\infty,q}_{x,v}}$ and $IV_2 \leq C_{M}|\rho^fT^f-\rho^gT^g|$, respectively. To make $\int|v|^2f dv$ and $\int|v|^2g dv$ terms, we consider
	\begin{align*}
	\rho^fT^f-\rho^gT^g &= \left(\rho^fT^f+\frac{1}{3}\rho^f|U^f|^2-\rho^gT^g-\frac{1}{3}\rho^g|U^g|^2\right) + \frac{1}{3}\left(\rho^g|U^g|^2-\rho^f|U^f|^2\right) \cr
	&= \frac{1}{3}\int_{\mathbb{R}^3} (f-g)|v|^2 + \frac{1}{3}\left(\rho^g|U^g|^2-\rho^f|U^f|^2\right).
	\end{align*}
	For the last quantity, we apply
	\begin{align*}
	\rho^g|U^g|^2-\rho^f|U^f|^2 &= \frac{|\rho^gU^g|^2}{\rho^g} -\frac{|\rho^fU^f|^2}{\rho^f}  \cr
	&= \frac{|\rho^gU^g|^2}{\rho^g}-\frac{|\rho^fU^f|^2}{\rho^g}+\frac{|\rho^fU^f|^2}{\rho^g} -\frac{|\rho^fU^f|^2}{\rho^f} \cr
	&\leq C_{M}\int_{\mathbb{R}^3}|v||f-g|dv +  C_{M}\int_{\mathbb{R}^3}|f-g|dv \cr
	&\leq C_{M}\|f-g\|_{L^{\infty,q}_{x,v}},
	\end{align*}
	to have
	\begin{align*}
	|\rho^fT^f-\rho^gT^g| \leq C_{M}\|f-g\|_{L^{\infty,q}_{x,v}}.
	\end{align*}
	Thus we obtain $IV_2 \leq C_{M}\|f-g\|_{L^{\infty,q}_{x,v}}$. Finally, for $III$ term, we have
	\begin{align*}
	III &\leq C_{M} \Bigg| \sum_{1\leq i,j\leq 5} \int_0^1 \langle v \rangle^q\left(\frac{\mathcal{P}_{ij}((v-U_{\theta}^f),U_{\theta}^f,T_{\theta}^f)}{(\rho_{\theta}^f)^{\alpha_{ij}}(T_{\theta}^f)^{\beta_{ij}}}\mathcal{M}^f(\theta)-\frac{\mathcal{P}_{ij}((v-U_{\theta}^g),U_{\theta}^g,T_{\theta}^g)}{(\rho_{\theta}^g)^{\alpha_{ij}}(T_{\theta}^g)^{\beta_{ij}}}\mathcal{M}^g(\theta)\right)   (1-\theta) d\theta \Bigg|.
	\end{align*}
	We split the terms inside $III$ as follows:
	\begin{align*}
	III_1^{ij} &= \left(\frac{\mathcal{P}_{ij}((v-U_{\theta}^f),U_{\theta}^f,T_{\theta}^f)}{(\rho_{\theta}^f)^{\alpha_{ij}}(T_{\theta}^f)^{\beta_{ij}}}-\frac{\mathcal{P}_{ij}((v-U_{\theta}^g),U_{\theta}^g,T_{\theta}^g)}{(\rho_{\theta}^g)^{\alpha_{ij}}(T_{\theta}^g)^{\beta_{ij}}}\right)\langle v \rangle^q \mathcal{M}^f(\theta), \cr
	III_2^{ij} &= \frac{\mathcal{P}_{ij}((v-U_{\theta}^g),U_{\theta}^g,T_{\theta}^g)}{(\rho_{\theta}^g)^{\alpha_{ij}}(T_{\theta}^g)^{\beta_{ij}}}\langle v \rangle^q (\mathcal{M}^f(\theta)-\mathcal{M}^g(\theta)).
	\end{align*}
	For the $III_1^{ij}$ term, we apply triangle inequality several times and use the estimate \eqref{rhopower} and \eqref{rhoTpower} to have $III_1^{ij} \leq C_{M}\|f-g\|_{L^{\infty,q}_{x,v}}$. For the term $\langle v \rangle^q (\mathcal{M}^f(\theta)-\mathcal{M}^g(\theta))$ inside $III_2^{ij}$, we can have the Lipschitz continuity of the local Maxwellian $III_2^{ij}\leq C_{M}\|f-g\|_{L^{\infty,q}_{x,v}}$ as in \cite{Perthame,Yun2}.
This completes the proof.
\end{proof}

We prove the local wellposedness theory of the BGK solutions.
\begin{proposition}\label{local} Consider the BGK equation \eqref{reform BGK} with initial data $f_0$ which satisfies $\|f_0\|_{L^{\infty,q}_{x,v}} <\infty$ and \eqref{initial LB}. Then, there exists a time $t_0=t_0(\Vert f_0 \Vert_{L^{\infty,q}_{x,v}})$ depending only on $\Vert f_0 \Vert_{L^{\infty,q}_{x,v}}$ such that there exists a unique local-in-time solution for $t\in [0,t_0]$ which safisfies
\begin{align*}
\sup_{0\leq s\leq t_0}\|f(s)\|_{L^{\infty,q}_{x,v}}\leq 2\Vert f_0 \Vert_{L^{\infty,q}_{x,v}}.
\end{align*}
\end{proposition}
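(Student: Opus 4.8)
The plan is to construct the local solution by a standard Picard-type iteration scheme in the weighted $L^\infty_{x,v}$ space, using the mild (characteristic) formulation of \eqref{reform BGK}. Define the iterates $f^{n+1}$ by
\begin{align*}
f^{n+1}(t,x,v) &= e^{-t}f_0(x-vt,v) + \int_0^t e^{-(t-s)}\left[\mathbf{P}f^{n} + \Gamma(f^{n})\right](s,x-v(t-s),v)\,ds,
\end{align*}
starting from $f^0 \equiv f_0$ (or $f^0\equiv 0$). The first step is to show that the iteration preserves the ball $\sup_{0\le s\le t_0}\|f^n(s)\|_{L^{\infty,q}_{x,v}} \le 2\|f_0\|_{L^{\infty,q}_{x,v}}$ for $t_0$ small enough. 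To do this I would first check, by the mild formulation of the full BGK model for $F^n = \mu + f^n$ together with the lower bound \eqref{initial LB} propagated along characteristics, that the macroscopic fields of each iterate obey the bounds of Lemma \ref{macrobarM} with $\overbar M$ replaced by $M := 2\|f_0\|_{L^{\infty,q}_{x,v}}$ on a short time interval (this is where \eqref{initial LB} enters, to keep $\rho^{f^n}$ bounded below and hence $U^{f^n}, T^{f^n}$, and all $\theta$-transitions under control via Lemma \ref{thetaesti}). Granting this, the pointwise bound $\langle v\rangle^q|\mathbf{P}f^n + \Gamma(f^n)| \le C_M \|f^n\|_{L^{\infty,q}_{x,v}}$ — for $\mathbf{P}f^n$ this is immediate from \eqref{pf} and the weight estimate, and for $\Gamma(f^n)$ this is exactly the content of the claim \eqref{claim} (equivalently the computation behind Lemma \ref{nonlinMbar}, now with fixed $M$ in place of $\overbar M$) — gives
\begin{align*}
\|f^{n+1}(t)\|_{L^{\infty,q}_{x,v}} \le \|f_0\|_{L^{\infty,q}_{x,v}} + C_M t \sup_{0\le s\le t}\|f^n(s)\|_{L^{\infty,q}_{x,v}},
\end{align*}
so choosing $t_0 \le 1/(2C_M)$ closes the invariance by induction.

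The second step is contraction: I would estimate $f^{n+1}-f^{n}$ by subtracting the mild formulations, obtaining
\begin{align*}
(f^{n+1}-f^n)(t,x,v) = \int_0^t e^{-(t-s)}\left[\mathbf{P}(f^n-f^{n-1}) + \big(\Gamma(f^n)-\Gamma(f^{n-1})\big)\right](s,x-v(t-s),v)\,ds.
\end{align*}
The linear term is handled as before, and the nonlinear difference is controlled by Lemma \ref{nonlinfg}, whose hypotheses are met because all iterates lie in the ball of radius $M$ and their macroscopic fields satisfy Lemma \ref{macrobarM}. This yields $\|(f^{n+1}-f^n)(t)\|_{L^{\infty,q}_{x,v}} \le C_M t_0 \sup_{0\le s\le t_0}\|(f^n-f^{n-1})(s)\|_{L^{\infty,q}_{x,v}}$, and shrinking $t_0$ further if necessary so that $C_M t_0 \le 1/2$ makes the map a contraction on $C([0,t_0]; L^{\infty,q}_{x,v})$ restricted to the invariant ball. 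Hence $f^n$ converges uniformly to a limit $f$, which one checks satisfies the mild formulation, and uniqueness in the ball follows from the same contraction estimate applied to two solutions.

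Finally I would record the two remaining points: first, nonnegativity of $F = \mu + f$ is preserved — from the mild formulation $F^{n+1} = e^{-\int_0^t \nu^n}F_0(x-vt,v) + \int_0^t e^{-\int_s^t \nu^n}\mathcal{M}(F^n)\,ds \ge 0$ since $F_0\ge 0$ and $\mathcal{M}(F^n)\ge 0$, and this passes to the limit; second, the bound $\sup_{0\le s\le t_0}\|f(s)\|_{L^{\infty,q}_{x,v}}\le 2\|f_0\|_{L^{\infty,q}_{x,v}}$ is exactly the invariance of the ball. The main obstacle I anticipate is not the fixed-point mechanics but making sure the macroscopic-field bounds of Lemma \ref{macrobarM} genuinely hold for \emph{each iterate} on a common time interval: one must verify that the short-time lower bound on $\rho^{f^n}$ (inherited from \eqref{initial LB} through the exponential factor $e^{-\int_0^t\nu^n}$ with $\nu^n \le C_M M^a$) does not degenerate as $n\to\infty$, which is fine because the constant $C_M$ is the same for all iterates — but this requires setting up the induction on invariance and the macroscopic bounds simultaneously rather than separately.
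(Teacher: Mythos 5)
Your overall strategy (mild formulation, weighted bound for $\mathbf{P}f$, the $\Gamma$ estimate of type \eqref{claim} via Lemmas \ref{macrobarM}--\ref{thetaesti}, and Lemma \ref{nonlinfg} for the difference) is exactly the paper's toolkit; the paper, however, only derives the a priori bound for a solution of \eqref{reform BGK} and then proves uniqueness by Gr\"onwall, whereas you try to make the construction explicit through a Picard iteration on the linearized equation. It is precisely in that extra step that there is a genuine gap, and it is the one you flagged but then dismissed: to apply the estimate \eqref{claim} (and Lemma \ref{nonlinfg}) to an iterate $f^{n}$ you need the lower bounds on $\rho^{f^{n}}$ and $T^{f^{n}}$ from Lemma \ref{macrobarM}, but those lower bounds are \emph{not} consequences of membership in the $L^{\infty,q}_{x,v}$ ball of radius $M$ (in the large-amplitude setting $|\int f^{n}dv|$ can far exceed $1$). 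In the paper they come from the mild form of the \emph{full} BGK equation, $F\ge e^{-\int_0^t\nu}F_0(x-vt,v)$, which uses nonnegativity of the gain term $\nu\mathcal{M}(F)$ and hence that $F$ actually solves \eqref{BGK}. Your iterate $F^{n+1}=\mu+f^{n+1}$ solves instead
\begin{align*}
\partial_t F^{n+1}+v\cdot\nabla_x F^{n+1}+F^{n+1}=\mu+\mathbf{P}f^{n}+\Gamma(f^{n})=F^{n}+\nu^{n}\big(\mathcal{M}(F^{n})-F^{n}\big),
\end{align*}
whose right-hand side has no sign, so $F^{n}$ need not stay nonnegative, $\rho^{f^{n}}$ and $T^{f^{n}}$ need not stay bounded away from $0$, and then $\mathcal{M}(F^{n})$, $\Gamma(f^{n})$ and the denominators $\rho_\theta^{\alpha_{ij}}T_\theta^{\beta_{ij}}$ in \eqref{polyform} are not even controlled (or defined). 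Your closing remark that ``the constant $C_M$ is the same for all iterates'' does not repair this, because the issue is not uniformity of constants but the absence of any mechanism propagating \eqref{initial LB} along the linearized iteration. Relatedly, your nonnegativity paragraph silently switches to a different scheme ($F^{n+1}$ driven by $\nu^{n}$ and $\mathcal{M}(F^{n})$), which is not the scheme you contract.

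The standard repair is to run the iteration in the Perthame--Pulvirenti form, $\partial_t F^{n+1}+v\cdot\nabla_x F^{n+1}+\nu^{n}F^{n+1}=\nu^{n}\mathcal{M}(F^{n})$ with $F^{0}=F_0$: this preserves $F^{n+1}\ge 0$, gives $F^{n+1}\ge e^{-\int\nu^{n}}F_0(x-vt,v)$ and hence the lower bounds on $\rho^{n+1},T^{n+1}$ uniformly in $n$ on a short interval, after which your ball-invariance and contraction estimates (now legitimately using Lemma \ref{nonlinfg}) go through for $f^{n}=F^{n}-\mu$; alternatively, present the argument as the paper does, as an a priori estimate on a solution plus Gr\"onwall uniqueness, and do not claim the linearized Picard scheme itself closes.
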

\begin{proof}
	From \eqref{reform BGK} in Lemma \ref{linearize}, we obtain the following mild solution:
	\begin{align*}
	f(t,x,v) = e^{-t} f_0(x-vt,v) + \int_0^t e^{-(t-s)}[\mathbf{P}f+\Gamma(f)](s,x-v(t-s),v) ds.
	\end{align*}
	Multiplying the above by $\langle v \rangle^q$, we have
	\begin{align*}
	\langle v \rangle^q \vert f(t,x,v)\vert \leq e^{-t} \Vert f_0 \Vert_{L^{\infty,q}_{x,v}} + \int_0^t e^{-(t-s)} \langle v \rangle^q \vert[ \mathbf{P}f + \Gamma (f)](s,x-v(t-s),v)\vert ds.
	\end{align*}
	By definition \eqref{pf} of $\mathbf{P}f$ in Lemma \ref{linearize}, we directly deduce that
	\begin{align}\label{PfLq}
	\langle v \rangle^q \mathbf{P}f(s) \leq C_q\sup_{0\leq s \leq t} \Vert f(s) \Vert_{L^{\infty,q}_{x,v}}, \quad 0\leq s \leq t.
	\end{align}
	To obtain estimate for $\Gamma(f)$, we note that the macroscopic fields $(\rho,U,T)$ and $(\rho_{\theta},U_{\theta},T_{\theta})$ are bounded depending on $M_0$ by the assumption \eqref{initial LB} with Lemma \ref{macrobarM} and Lemma \ref{thetaesti}. Then using a similar argument in the proof of \eqref{claim}, we have
	\begin{align*}
	\langle v \rangle^q \vert \Gamma(f)(s)\vert \leq C_q \left(\sup_{0\leq s \leq t}\Vert f(s) \Vert_{L^{\infty,q}_{x,v}}\right)^n e^{C_q (\sup_{0\leq s \leq t}\Vert f(s) \Vert_{L^{\infty,q}_{x,v}})^at}, \quad 0\leq s \leq t,
	\end{align*}
	where $C_q,n$ and $a$ are the same constant as in \eqref{claim}. Note that \eqref{claim}  holds for $\sup_{0\leq s \leq t}  \Vert f(s) \Vert_{L^{\infty,q}_{x,v}}$ instead of $\overbar{M}$. In sum, we obtain
	\begin{align*}
	\Vert f(t) \Vert_{L^{\infty,q}_{x,v}} &\leq e^{-t} \Vert f_0 \Vert_{L^{\infty,q}_{x,v}}+C_q(1-e^{-t})\sup_{0\leq s \leq t} \Vert f(s) \Vert_{L^{\infty,q}_{x,v}} \cr
	&\quad + C_q \left(\sup_{0\leq s \leq t}\Vert f(s) \Vert_{L^{\infty,q}_{x,v}}\right)^{n} te^{C_q (\sup_{0\leq s \leq t}\Vert f(s) \Vert_{L^{\infty,q}_{x,v}})^at}.
	\end{align*}
	Hence, there exists $t_0=t_0(M_0)$ such that
	\begin{align*}
	\sup_{0\leq t \leq t_0} \Vert f(s) \Vert_{L^{\infty,q}_{x,v}} \leq 2\Vert f_0 \Vert_{L^{\infty,q}_{x,v}}.
	\end{align*}
	For the uniqueness of solutions, we assume that $g$ satisfies the reformulated BGK equation \eqref{reform BGK} with the same initial data $f_0$ and
	\begin{align*}
	\sup_{0\leq s \leq t_0} \Vert g(s) \Vert_{L^{\infty,q}_{x,v}} \leq 2\Vert f_0 \Vert_{L^{\infty,q}_{x,v}}.
	\end{align*}
	Using the mild formulation, we have
	\begin{align*}
	(f-g)(t,x,v) = \int_0^te^{-(t-s)} [\mathbf{P}(f-g) +\Gamma(f)- \Gamma(g)](s,x-v(t-s),v) ds.
	\end{align*}
	Moreover, one obtains that
	\begin{align*}
	\Vert (f-g)(t) \Vert_{L^{\infty,q}_{x,v}} \leq \int_0^t e^{-(t-s)} \left(\Vert \mathbf{P}(f-g)(s)\Vert_{L^{\infty,q}_{x,v}} + \Vert (\Gamma(f)-\Gamma(g))(s)\Vert_{L^{\infty,q}_{x,v}}\right) ds.
	\end{align*}
	For the $\mathbf{P}(f-g)$ part above, it follows from \eqref{PfLq} that
	\begin{align*}
	\int_0^t e^{-(t-s)} \Vert \mathbf{P}(f-g)(s)\Vert_{L^{\infty,q}_{x,v}}  ds \leq C_q \int_0^t  \Vert (f-g)(s) \Vert_{L^{\infty,q}_{x,v}} ds.
	\end{align*}
	To treat the nonlinear part $\Gamma(f)-\Gamma(g)$, using Lemma \ref{nonlinfg}, we obtain
	\begin{align*}
	\int_0^t e^{-(t-s)}\langle v \rangle^q \Vert (\Gamma(f)-\Gamma(g))(s)\Vert_{L^{\infty,q}_{x,v}} ds \leq C_{f_0} \int_0^t \Vert (f-g)(s)\Vert_{L^{\infty,q}_{x,v}}ds,
	\end{align*}
	where $C_{f_0}$ is a constant depending on $\Vert f_0 \Vert_{L^{\infty,q}_{x,v}}$.
	Combining the estimates for $\mathbf{P}(f-g)$ and $\Gamma(f)-\Gamma(g)$, one obtains that
	\begin{align*}
	\Vert (f-g)(t) \Vert_{L^{\infty,q}_{x,v}} \leq C_{q,f_0} \int_0^t \Vert (f-g)(s)\Vert_{L^{\infty,q}_{x,v}}ds,
	\end{align*}
	which implies the uniqueness due to Gr\"{o}nwall's inequality.
\end{proof}

\section{Control over the highly nonlinear regime and decay estimate} \label{sec 5}
In Section 3, we proved that the BGK equation enters the quadratic nonlinear regime for $t\geq t_{eq}$. In this section we consider the highly nonlinear regime $0\leq t \leq t_{eq}$ in Figure \ref{fig}. Note that we already performed estimates for macroscopic fields in Lemma \ref{macrobarM}. Unfortunately, however, the bounds of the macroscopic fields in Lemma \ref{macrobarM} depend on the a priori bound $\overbar{M}$, which is not sufficient for our estimates. In this section, instead, our aim is to control macroscopic fields $(\rho, U, T)$ for highly nonlinear regime depending only on the size of initial data $M_0$ and other generic quantities.


\subsection{Control of the macroscopic fields}

\hide
{\color{red}
We consider the control of the nonlinear term depending only on the initial data under a priori boundednes which is very similar to subsection \ref{3.nonlinear}.
\hide
{\color{red}
\begin{proposition}\label{propnonlin} Under the following a priori assumption
\begin{align*}
\sup_{0\leq s \leq t_*} \| f(s)\|_{L^{\infty,q}_{x,v}} \leq \overbar{M},
\end{align*}
we have
\begin{align*}
\sup_{0\leq s\leq t_*}|\mathcal{Q}_{i,j}(s)| \leq C_{f_0},
\end{align*}
for some large constant $C_{f_0}$ depending on initial data $f_0$.
\end{proposition}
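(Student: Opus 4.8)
The quantity $\mathcal{Q}_{i,j}(s)$ is (the $\langle v \rangle^q$-weighted supremum over $(x,v)$ of) the generic coefficient that multiplies the product $\langle f,e_i\rangle_v\langle f,e_j\rangle_v$ in the nonlinear term $\Gamma(f)$ of Lemma \ref{linearize}: up to the factor $(1-\theta)$ and the $\theta$-integration, the expressions $\nu\,\mathcal{P}_{ij}((v-U_\theta),U_\theta,T_\theta)\,\rho_\theta^{-\alpha_{ij}}T_\theta^{-\beta_{ij}}\,\mathcal{M}(\theta)$ from \eqref{polyform} together with $(\mathbf{P}f-f)\int_0^1 A_i(\theta)\,d\theta$ from \eqref{A12}. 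The plan is to bound $\mathcal{Q}_{i,j}$ by splitting $[0,t_*]=[0,t_{eq}]\cup[t_{eq},t_*]$ and using genuinely different inputs on the two pieces: on $[0,t_{eq}]$ the crude but explicit bounds of Lemma \ref{macrobarM} and Lemma \ref{thetaesti}, exploiting that $t_{eq}$ is a fixed function of the data; on $[t_{eq},t_*]$ the relaxation of the macroscopic fields to $(1,0,1)$ furnished by Lemma \ref{macrodelta}.

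On $[0,t_{eq}]$ I would insert the bounds of Lemma \ref{macrobarM} and Lemma \ref{thetaesti} for $(\rho,U,T)$ and $(\rho_\theta,U_\theta,T_\theta)$ into the coefficient, following verbatim the computation leading from \eqref{nonlinsubsti} to \eqref{nonlinupper}: Lemma \ref{Pesti} controls $\langle v \rangle^q\mathcal{M}(\theta)$, the elementary estimate $\big|(v-U_\theta)^n T_\theta^{-n/2}e^{-|v-U_\theta|^2/2T_\theta}\big|\le C$ absorbs the $(v-U_\theta)$-monomials of $\mathcal{P}_{ij}$ against the Gaussian, and the remaining finitely many powers of $\rho_\theta^{-1},T_\theta^{-1},|U_\theta|,T_\theta$ (with exponents read off from Appendix \ref{Nonlincomp}) are estimated by Lemma \ref{thetaesti}. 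This yields $|\mathcal{Q}_{i,j}(s)|\le C_q\overbar{M}^n e^{C_q\overbar{M}^a s}$ for a finite $n$. Since $t_{eq}=\ln(4C_qM_0/\delta)$ depends only on $M_0,q,\delta$ by \eqref{def teq}, and $\overbar{M}$ is chosen as a function of the initial data in the proof of Theorem \ref{maintheorem}, the right-hand side is $\le C_q\overbar{M}^n e^{C_q\overbar{M}^a t_{eq}}=:C_{f_0}$ on this whole piece.

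On $[t_{eq},t_*]$ I invoke Lemma \ref{macrodelta} (legitimate because $\mathcal{E}(F_0)$ is taken small enough for Proposition \ref{macrodecay}): there $|\rho-1|,|U|,|T-1|\le 2\delta$ with $\delta<1/3$. From $\rho_\theta=1+\theta(\rho-1)$ and the identity for $3\rho_\theta T_\theta$ in \eqref{Ttheta} one checks that $\rho_\theta,T_\theta$ remain within $O(\delta)$ of $1$ and $|U_\theta|$ within $O(\delta)$ of $0$, uniformly in $\theta\in[0,1]$; hence $\rho_\theta^{\pm1},T_\theta^{\pm1},|U_\theta|$, the collision frequency $\nu=\rho^aT^b$, and (via Lemma \ref{Pesti}) $\langle v \rangle^q\mathcal{M}(\theta)$ are all bounded by universal constants. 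Thus $|\mathcal{Q}_{i,j}(s)|\le C_q\le C_{f_0}$ on $[t_{eq},t_*]$, and taking the maximum of the two bounds finishes the proof.

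The only delicate point is keeping the final constant independent of the a priori parameters $t_*$ and $\overbar{M}$ (it may depend on $\overbar{M}$ only through $M_0$), and this is exactly what forces the split at $t_{eq}$: on $[t_{eq},t_*]$ the exponential-in-$s$ bounds of Lemma \ref{macrobarM} are useless since $t_*$ is unbounded, so one must trade them for the entropy-driven relaxation $(\rho,U,T)\to(1,0,1)$; conversely on $[0,t_{eq}]$ the fields need not yet be near equilibrium, but the interval length $t_{eq}$ is a fixed function of $M_0$, so the crude exponential bounds are affordable. A minor bookkeeping point, settled by the explicit list in Appendix \ref{Nonlincomp}, is that $\alpha_{ij},\beta_{ij}$ and $\deg\mathcal{P}_{ij}$ are finite, so that $n$ is finite and the denominators $\rho_\theta^{\alpha_{ij}},T_\theta^{\beta_{ij}}$ never degenerate on either piece.
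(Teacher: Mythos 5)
Your overall architecture (split $[0,t_*]$ at $t_{eq}$, use Lemma \ref{macrodelta} to make the coefficient $O(1)$ on $[t_{eq},t_*]$, and accept a cruder bound on the fixed-length interval $[0,t_{eq}]$) is the same as the paper's, and your treatment of the second regime is correct. The genuine gap is in the first regime: you bound the coefficient there by inserting Lemma \ref{macrobarM} and Lemma \ref{thetaesti}, obtaining $C_q\overbar{M}^n e^{C_q\overbar{M}^a t_{eq}}$, and then declare this to be a constant $C_{f_0}$ ``because $\overbar{M}$ is chosen as a function of the initial data in the proof of Theorem \ref{maintheorem}.'' That is circular. Within this proposition $\overbar{M}$ is an arbitrary a priori parameter, and the whole point of the statement (and the reason it is needed) is that $C_{f_0}$ must be independent of $\overbar{M}$ and $t_*$: in the proof of the main theorem, $\overbar{M}$ is \emph{defined} in \eqref{M bar} in terms of the constant produced here. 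If that constant itself grows like $\overbar{M}^n e^{C\overbar{M}^a t_{eq}}$, the defining relation for $\overbar{M}$ becomes $\overbar{M}\gtrsim M_0\,\overbar{M}^{\,n}e^{C\overbar{M}^a t_{eq}}$, which has no admissible solution, so the bootstrap cannot be closed. Your own ``delicate point'' paragraph states the requirement (dependence on $\overbar{M}$ only through $M_0$), but the bound you actually produce on $[0,t_{eq}]$ violates it.

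The missing idea is the paper's Section 5.1: before $t_{eq}$ one must \emph{not} reuse the $\overbar{M}$-dependent field bounds of Lemmas \ref{macrobarM} and \ref{thetaesti}, but re-derive field bounds depending only on the data. Proposition \ref{macrodecay} gives, on all of $[0,t_*]$, the upper bounds $\rho,\ |\rho U|,\ 3\rho T+\rho|U|^2\le C_qM_0+3+\tfrac34\delta$, hence $\nu\le\nu^{max}_{f_0}$ as in \eqref{numax}, a constant depending only on $M_0,\delta$. Feeding this into the mild form (for the lower bound of $\rho$), into a Gr\"onwall bound for $\|F(t)\|_{L^\infty_{x,v}}$ together with Lemma \ref{Pesti} (for the lower bound of $T$), and into the $\theta$-transition identities gives Lemmas \ref{4.macro} and \ref{4.theta}, whose constants involve $M_0$, $C_0$, $\delta$ and $e^{\nu^{max}_{f_0}t}$ only. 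Repeating your coefficient computation with these bounds (this is Lemma \ref{nonlinM0}) yields $C_qM_0^n e^{C\nu^{max}_{f_0}t}$, and restricting to $t\le t_{eq}$ with $t_{eq}$ from \eqref{def teq} gives a bona fide $C_{f_0}$, which is exactly what \eqref{M bar} needs. With this replacement on $[0,t_{eq}]$, and your argument on $[t_{eq},t_*]$ kept as is, the proof matches the paper's.
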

}
\unhide

To estimate the nonlinear term, we need to estimate the macroscopic fields. Thus the estimate is very similar with Lemma \ref{macrobarM} and Lemma \ref{thetaesti}. The only different thing is that in subsection \ref{3.nonlinear}, we used the a priori bound $\sup_{0\leq s \leq t_*} \| f(s)\|_{L^{\infty,q}_{x,v}} \leq \overbar{M}$ but now we should bound it by an initial data.
In Section 3.1 we used $\nu(t,x) \leq \overbar{M}^a$ in \eqref{weightesti}.
In this section, because of the upper bound of $\rho$ and $\rho T$ in Proposition \ref{macrodecay}, we can have initial data depending upper bound $\nu(t,x)\leq \nu^{max}_{f_0}$.
} 
\unhide

\begin{lemma}\label{4.macro} Assume that the initial data satisfies \eqref{initial LB} and $\Vert f_0 \Vert_{L^{\infty,q}_{x,v}} \leq M_0<\infty$ for $q>10$. Under a priori assumption \eqref{Assumption}, the macroscopic fields $(\rho,U,T)$ in \eqref{macro quan} are bounded as follows:
\begin{align}\label{macroest123}
\begin{split}
&(1)~  C_0e^{-\nu^{max}_{f_0}t}\leq \rho(t,x) \leq C_qM_0,  \\
&(2)~ |U(t,x)| \leq  C_q M_0 e^{\nu^{max}_{f_0}t},  \\
&(3)~ CM_0^{-\frac{2}{3}}e^{-\frac{4}{3}\nu^{max}_{f_0}t}  \leq T(t,x) \leq C_q M_0 e^{\nu^{max}_{f_0}t} ,
\end{split}
\end{align}
for all $0\leq t \leq t_*$ and a generic constant $C_q>0$, where $\nu_{f_0}^{max}$ is a generic constant that will be defined in \eqref{numax}.
\end{lemma}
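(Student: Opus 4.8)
\emph{The plan} is to reproduce the proof of Lemma \ref{macrobarM} with a single essential change: the collision frequency is now bounded by a constant depending only on $M_0$ rather than on the a priori bound $\overbar{M}$, after which every estimate there goes through with $\overbar{M}$ replaced by $M_0$.

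\textbf{Step 1 ($M_0$-bound for $\nu$).} Since $F=\mu+f\geq 0$ we have, for all $(t,x)$,
\begin{align*}
0 &\leq \rho(t,x)+3\rho(t,x)T(t,x)+\rho(t,x)|U(t,x)|^2 \\
&= \int_{\mathbb{R}^3}\langle v\rangle^2 F(t,x,v)\,dv = \int_{\mathbb{R}^3}\langle v\rangle^2\mu\,dv + \int_{\mathbb{R}^3}\langle v\rangle^2 f(t,x,v)\,dv,
\end{align*}
and Proposition \ref{macrodecay} --- whose smallness requirement on $\mathcal{E}(F_0)$ (depending on $\overbar{M}$ and $t_*$) is assumed to hold, as it does in the setting of Theorem \ref{maintheorem} --- bounds the last integral by $C_qM_0e^{-t}+\frac{3}{4}\delta$ for a fixed $\delta\in(0,1)$. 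Hence each of the nonnegative quantities $\rho$, $3\rho T$, $\rho|U|^2$ is at most $C_qM_0$ on $[0,t_*]\times\mathbb{T}^3$ (we may assume $M_0\geq 1$, since enlarging $M_0$ only weakens the hypothesis). As $a\geq b\geq 0$, it follows that $\nu=\rho^aT^b=\rho^{a-b}(\rho T)^b\leq(C_qM_0)^a$, and we take $\nu^{max}_{f_0}$ to be this constant, enlarged if necessary to dominate the absolute constants appearing in Step 3; this is \eqref{numax}.

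\textbf{Step 2 (lower bound for $\rho$; upper bounds for $|U|$, $T$).} Discarding the nonnegative gain term in the mild formulation of \eqref{BGK} and using $\nu\leq\nu^{max}_{f_0}$ together with \eqref{initial LB},
\begin{align*}
\rho(t,x)=\int_{\mathbb{R}^3}F(t,x,v)\,dv &\geq e^{-\nu^{max}_{f_0}t}\int_{\mathbb{R}^3}F_0(x-vt,v)\,dv \\
&\geq C_0e^{-\nu^{max}_{f_0}t}.
\end{align*}
Combining with the upper bounds of Step 1 gives $|U|^2=\rho|U|^2/\rho\leq C_qM_0e^{\nu^{max}_{f_0}t}$ and $T=\rho T/\rho\leq C_qM_0e^{\nu^{max}_{f_0}t}$, which are $(1)$, $(2)$, and the upper bound in $(3)$ (the extra factor $\sqrt{M_0}\,e^{\frac{1}{2}\nu^{max}_{f_0}t}$ in the bound for $|U|$ is absorbed into the generic constant).

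\textbf{Step 3 (lower bound for $T$), and the main obstacle.} This is the only genuinely new point: we cannot afford the bound $\|F\|_{L^{\infty}_{x,v}}\leq C_q\overbar{M}$ used in Lemma \ref{macrobarM}. Instead, from the mild formulation and $\mathcal{M}(F)(s)\leq C\|F(s)\|_{L^{\infty}_{x,v}}$ (Lemma \ref{Pesti}(1)),
\begin{align*}
\|F(t)\|_{L^{\infty}_{x,v}}\leq\|F_0\|_{L^{\infty}_{x,v}}+C\int_0^t\|F(s)\|_{L^{\infty}_{x,v}}\,ds,
\end{align*}
so Gr\"{o}nwall's inequality and $\|F_0\|_{L^{\infty}_{x,v}}\leq C+M_0$ yield $\|F(t)\|_{L^{\infty}_{x,v}}\leq C_qM_0e^{\nu^{max}_{f_0}t}$. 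Inserting this together with the lower bound for $\rho$ from Step 2 into Lemma \ref{Pesti}(1) in the form $T^{3/2}\geq\rho/(C\|F\|_{L^{\infty}_{x,v}})$ gives $T^{3/2}\geq CM_0^{-1}e^{-2\nu^{max}_{f_0}t}$, i.e.\ $T\geq CM_0^{-2/3}e^{-\frac{4}{3}\nu^{max}_{f_0}t}$, the lower bound in $(3)$. The argument is elementary once Proposition \ref{macrodecay} is available; the crux is Step 1, where that proposition is precisely what decouples the collision-frequency bound from $\overbar{M}$, and the only remaining care is to route through an $M_0$-dependent bound on $\|F\|_{L^{\infty}_{x,v}}$ in Step 3 rather than an $\overbar{M}$-dependent one.
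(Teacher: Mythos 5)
Your proof is correct and is essentially the paper's own argument: Proposition \ref{macrodecay} gives the $M_0$-dependent bound on $\rho$, $\rho|U|^2$, $3\rho T$ and hence on $\nu$, the mild formulation plus \eqref{initial LB} gives the lower bound on $\rho$, the upper bounds on $|U|$, $T$ follow by division, and the lower bound on $T$ comes from Lemma \ref{Pesti}(1) together with a Gr\"onwall bound $\|F(t)\|_{L^\infty_{x,v}}\lesssim M_0e^{\nu^{max}_{f_0}t}$ (the paper runs this Gr\"onwall in $L^{\infty,q}_{x,v}$ via Lemma \ref{Pesti}(2), you in plain $L^\infty$, which is immaterial). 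The only blemish is that your Gr\"onwall display drops the factor $\nu(s)\le\nu^{max}_{f_0}$ multiplying $\mathcal{M}(F)$ in the gain term, so the true Gr\"onwall rate is $C\nu^{max}_{f_0}$ rather than $C$; this only perturbs the exponent by a generic constant (absorbable into the definition of $\nu^{max}_{f_0}$ in \eqref{numax}), exactly the absorption the paper itself performs.
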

\begin{proof}
By Proposition \ref{macrodecay}, we have the following upper bounds for $t\in[0,t_*]$:
\begin{align} \label{macro upper}
\rho-1,~|\rho U|,~3\rho T+\rho|U|^2-3 \leq C_q M_0 + \frac{3}{4}\delta.
\end{align}
Thus the collision frequency is bounded during the time $t\in[0,t_*]$:
\begin{align*}
\nu(t,x) = \rho^{a-b} (\rho T)^b  \leq \left(C_q M_0 + 3+\frac{3}{4}\delta\right)^a,
\end{align*}
where we used $a\geq b$ and \eqref{macro upper}. We define the maximum value of the collision frequency as
\begin{align}\label{numax}
\nu^{max}_{f_0}:= \left(C_qM_0 +3+ \frac{3}{4}\delta\right)^a.
\end{align}
For simplicity, since we are considering the case $M_0>1$ with $\delta<1/3$, we write the upper bound of $\rho$, $\rho U$, and $3\rho T+\rho|U|^2$ as $C_qM_0$ by using generic constant $C_q$:
\begin{align}\label{upperrut}
\rho,~|\rho U|,~3\rho T+\rho|U|^2 \leq C_q M_0.
\end{align}
(1) By exactly the same argument as \eqref{rholower}, the mild formulation of the BGK model gives the following lower bound of the density
\begin{align*}
\rho(t,x)=\int_{\mathbb{R}^3}F(t,x,v)dv &\geq  e^{-\int_0^t \nu^{max}_{f_0} d\tau} \int_{\mathbb{R}^3}F_0(x-vt,v) dv \geq C_0e^{-\nu^{max}_{f_0}t},
\end{align*}
where the last inequality comes from \eqref{initial LB}. \\
(2) From $\eqref{macroest123}_1$ (lower bound of $\rho$) and \eqref{upperrut}, we obtain the upper bound of $U$
\begin{align*}
|U| = \frac{|\rho U|}{\rho} \leq \frac{C_q M_0}{C_0e^{-\nu^{max}_{f_0}t} }
\leq C_q M_0 e^{\nu^{max}_{f_0}t}.
\end{align*}
(3) For the upper bound of the temperature $T$, we use $\eqref{macroest123}_1$ 
and \eqref{upperrut} to obtain
\begin{align*}
|T| = \frac{3\rho T +\rho |U|^2}{3\rho} -\frac{1}{3}|U|^2  \leq \frac{C_q M_0}{3C_0e^{-\nu^{max}_{f_0}t}} \leq C_q M_0 e^{\nu^{max}_{f_0}t} .
\end{align*}
For the lower bound of the temperature $T$, we apply (1) in Lemma \ref{Pesti}. Before that, we change the $L^{\infty}$ norm of $F(t)$ to the $L^{\infty}$ norm of the initial data. By using the uniform upper bound \eqref{numax} of the collision frequency $\nu$ and (2) in Lemma \ref{Pesti}, we obtain
\begin{align*}
\avv^{q}\p_{t}F + \avv^{q}v\cdot\nabla_x F &= \nu(\avv^{q}\mathcal{M}(F) - \avv^{q}F)
\leq \nu^{max}_{f_0} \|F\|_{L^{\infty,q}_{x,v}},
\end{align*}
for $q=0$ or $q>5$.
The mild formulation gives
\begin{align*}
\|F(t)\|_{L^{\infty,q}_{x,v}} \leq \|F_0\|_{L^{\infty,q}_{x,v}} + \nu^{max}_{f_0}\int_0^t \|F(s)\|_{L^{\infty,q}_{x,v}}ds,
\end{align*}
and Gr\"{o}nwall's inequality yields
\begin{align*}
\|F(t)\|_{L^{\infty,q}_{x,v}} \leq e^{\nu^{max}_{f_0}t}\|F_0\|_{L^{\infty,q}_{x,v}}.
\end{align*}
Combining with (1) in Lemma \ref{Pesti} and $\eqref{macroest123}_1$ (lower bound of $\rho$) gives
\begin{align*}
T^{\frac{3}{2}} \geq \frac{\rho}{C\|F(t)\|_{L^{\infty}_{x,v}}} \geq \frac{C_0e^{-\nu^{max}_{f_0}t} }{Ce^{\nu^{max}_{f_0}t}\|F_0\|_{L^{\infty}_{x,v}}} \geq CM_0^{-1}e^{-2\nu^{max}_{f_0}t}.
\end{align*}
Thus, we get the lower bound of $T$ depending on the initial data.
\end{proof}

\begin{lemma}\label{4.theta} We suppose all assumptions in Lemma \ref{4.macro}. For all $0\leq t \leq t_*$, the transition of the macroscopic fields $(\rho_{\theta}$, $U_{\theta}$, $T_{\theta})$ in \eqref{transition} has the following estimate
\begin{align*}
&(1)~  C_0e^{-\nu^{max}_{f_0}t} \leq \rho_{\theta} \leq C_qM_0, \cr
&(2)~  |U_{\theta}| \leq C_q M_0e^{\nu^{max}_{f_0}t}, \cr
&(3)~ C_qM_0^{-\frac{5}{3}}e^{-\frac{7}{3}\nu^{max}_{f_0}t} \leq T_{\theta}  \leq C_q M_0e^{\nu^{max}_{f_0}t},
\end{align*}
for $0\leq \theta \leq 1$ and a generic constant $C_q$, where we have denoted $\nu_{f_0}^{max}$ as \eqref{numax}.
\end{lemma}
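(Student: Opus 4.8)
The plan is to mirror the proof of Lemma \ref{thetaesti} essentially line by line, the only change being that its crude a priori inputs (the bound $\overbar M$ and the exponential rate $C_q\overbar M^{a}$) are replaced throughout by the sharper initial-data-dependent inputs just established in Lemma \ref{4.macro}: namely $\rho,\,|\rho U|,\,3\rho T+\rho|U|^{2}\le C_qM_0$ together with the rate $\nu^{max}_{f_0}$ of \eqref{numax}. Throughout I would work from the defining relations \eqref{transition}, that is $\rho_\theta=\theta\rho+(1-\theta)$, $\rho_\theta U_\theta=\theta\rho U$, and the equivalent identity $G_\theta=\theta G$.

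For (1), since $\rho_\theta$ is a convex combination of $\rho$ and $1$, the two-sided bound $C_0e^{-\nu^{max}_{f_0}t}\le\rho\le C_qM_0$ from $\eqref{macroest123}_1$ transfers immediately to $\rho_\theta$, using $M_0>1$ (so $1\le C_qM_0$) and $C_0\le 1$ (the latter because the normalization \eqref{normal} forces $\rho(t,\cdot)$ to have spatial average $1$, hence $\inf_x\rho\le 1$, hence $C_0\le 1$). For (2), I would bound $|U_\theta|=|\theta\rho U|/\rho_\theta$ by combining the upper bound on $|\rho U|$ with the lower bound on $\rho_\theta$ just obtained, which gives $|U_\theta|\le C_qM_0/(C_0e^{-\nu^{max}_{f_0}t})= C_qM_0e^{\nu^{max}_{f_0}t}$.

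For (3), the upper bound comes from $3\rho_\theta T_\theta=(3\rho T+\rho|U|^{2})\theta+3(1-\theta)-\rho_\theta|U_\theta|^{2}\le C_qM_0$ divided by $3\rho_\theta\ge 3C_0e^{-\nu^{max}_{f_0}t}$. For the lower bound I would reuse the same algebraic rearrangement as in Lemma \ref{thetaesti}: the identity $\theta\rho|U|^{2}-\rho_\theta|U_\theta|^{2}=\theta(1-\theta)\rho|U|^{2}/\rho_\theta\ge 0$ turns the temperature relation into $3\rho_\theta T_\theta=3\theta\rho T+3(1-\theta)+\theta(1-\theta)\rho|U|^{2}/\rho_\theta\ge 3\theta\rho T+3(1-\theta)\ge 3\min\{\rho T,1\}$; dividing by $3\rho_\theta\le C_qM_0$ and inserting $\rho\ge C_0e^{-\nu^{max}_{f_0}t}$ from $\eqref{macroest123}_1$ and $T\ge CM_0^{-2/3}e^{-\frac43\nu^{max}_{f_0}t}$ from $\eqref{macroest123}_3$ then produces $T_\theta\ge C_qM_0^{-5/3}e^{-\frac73\nu^{max}_{f_0}t}$, which is exactly the claimed bound (the rate $\frac73$ being the sum of the rates $1$ and $\frac43$ of $\rho$ and $T$).

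I do not expect any genuine obstacle here once Lemma \ref{4.macro} is in hand; the computation is a direct re-run of the one already carried out for Lemma \ref{thetaesti}. The only step that deserves slight care is the lower bound on $T_\theta$, where one must first isolate the manifestly nonnegative kinetic-energy term $\theta(1-\theta)\rho|U|^{2}/\rho_\theta$ before estimating, and then use the convexity inequality $\theta\rho T+(1-\theta)\ge\min\{\rho T,1\}$ to keep the resulting bound uniform over $\theta\in[0,1]$.
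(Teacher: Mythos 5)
Your proposal is correct and follows essentially the same route as the paper: both work directly from \eqref{transition}, feed in the bounds of Lemma \ref{4.macro} with the rate $\nu^{max}_{f_0}$, and for the lower bound on $T_\theta$ drop the nonnegative term $\theta(1-\theta)\rho|U|^2/\rho_\theta$ exactly as in Lemma \ref{thetaesti} before inserting the lower bounds on $\rho$ and $T$. Your extra remarks (justifying $C_0\le 1$ and making the convexity step $\theta\rho T+(1-\theta)\ge\min\{\rho T,1\}$ explicit) only tidy up points the paper leaves implicit.
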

\begin{proof}
(1) Applying $\eqref{macroest123}_1$ (upper and lower bound of $\rho$) in Lemma \ref{4.macro}, we have
\begin{align}\label{rho theta}
\begin{split}
\rho_{\theta} &= \theta \rho +(1-\theta) \leq  \theta C_qM_0 \leq C_qM_0, \\
\rho_{\theta} &= \theta \rho +(1-\theta) \geq \theta C_0e^{-\nu^{max}_{f_0}t}+ (1-\theta) \geq C_0e^{-\nu^{max}_{f_0}t}.
\end{split}
\end{align}
(2) It follows from the upper bound of $\rho U$ in \eqref{upperrut} and $\eqref{rho theta}_2$
\begin{align*}
|U_{\theta}| = \bigg| \frac{\theta \rho U}{\rho_{\theta}} \bigg| \leq \frac{\theta C_q M_0}{C_0e^{-\nu^{max}_{f_0}t}} \leq C_q M_0e^{\nu^{max}_{f_0}t} .
\end{align*}
(3) We first derive the upper bound of $T_\theta$. By $\eqref{rho theta}_2$
 and \eqref{upperrut}, we obtain
\begin{align*}
T_{\theta}  &= \frac{(3\rho T+\rho |U|^2)\theta +3(1-\theta) -\rho_{\theta} |U_{\theta}|^2}{3\rho_{\theta}} \leq \frac{\theta C_q M_0 +3(1-\theta)}{3C_0e^{-\nu^{max}_{f_0}t}} \leq C_q M_0e^{\nu^{max}_{f_0}t}.
\end{align*}
For the lower bound of $T_\theta$, we use a similar argument in the proof of Lemma \ref{thetaesti}. Using $\eqref{rho theta}_1$
and Lemma \ref{4.macro}, we have
\begin{align*}
T_{\theta}  &\geq \frac{\theta \rho T +(1-\theta)}{\rho_{\theta} } \geq \frac{(C_0e^{-\nu^{max}_{f_0}t})(CM_0^{-\frac{2}{3}}e^{-\frac{4}{3}\nu^{max}_{f_0}t})}{C_qM_0} \geq C_qM_0^{-\frac{5}{3}}e^{-\frac{7}{3}\nu^{max}_{f_0}t}.
\end{align*}
\end{proof}

Using previous Lemma \ref{4.macro} and Lemma \ref{4.theta}, we derive an improved nonlinear estimate for $\Gamma$ than \eqref{claim}.
\begin{lemma}\label{nonlinM0} Let all assumptions in Lemma \ref{4.macro} hold. Recall the nonlinear term $\Gamma(f)$ in \eqref{def gamma}. The $v$-weighted nonlinear term is bounded as follows
\begin{align*}
\langle v \rangle^q\vert \Gamma(f)(t,x,v)\vert  \leq  C_q M_0^ne^{C\nu^{max}_{f_0}t}\Vert f(t)\Vert_{L^{\infty,q}_{x,v}}\int_{\mathbb{R}^3} \vert f(u)\vert  (1+|u|^2) du,
\end{align*}
where $n>1$ and $C_q>0$ are generic constants.
\end{lemma}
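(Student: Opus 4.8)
The plan is to mimic closely the proof of the estimate \eqref{claim} from Lemma \ref{nonlinMbar}, but with two improvements: first, replace all appearances of the a priori bound $\overbar{M}$ on the macroscopic fields by the initial-data-dependent bounds supplied by Lemma \ref{4.macro} and Lemma \ref{4.theta}; and second, keep one factor of $\|f(t)\|_{L^{\infty,q}_{x,v}}$ explicit rather than absorbing it into $\overbar{M}$, which is precisely what \eqref{claim} fails to do and what makes the present lemma ``improved''. Concretely, I would start from the decomposition $\Gamma(f)=\Gamma_1(f)+\Gamma_2(f)$ in Lemma \ref{linearize} and handle $\Gamma_2$ first, since it carries the second derivatives of the local Maxwellian.

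For $\Gamma_2$, recall from \eqref{polyform} that the integrand is $\rho^aT^b\,\mathcal{P}_{ij}((v-U_\theta),U_\theta,T_\theta)\,\rho_\theta^{-\alpha_{ij}}T_\theta^{-\beta_{ij}}\,\mathcal{M}(\theta)\,(1-\theta)\,\langle f,e_i\rangle_v\langle f,e_j\rangle_v$. I would weight by $\langle v\rangle^q$, use $\langle v\rangle^q\le C(1+|v-U_\theta|^q+|U_\theta|^q)$, and absorb the $(v-U_\theta)$-monomials against $\mathcal{M}(\theta)$ via the pointwise bound $|(v-U)^m T^{-m/2}\exp(-|v-U|^2/2T)|\le C$, exactly as in \eqref{nonlinsubsti}--\eqref{nonlinupper}. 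This reduces the $v$-dependence to a polynomial in $\sqrt{T_\theta}, U_\theta, T_\theta$ with coefficients involving $\rho_\theta^{-\alpha_{ij}}T_\theta^{-\beta_{ij}}$; plugging in the bounds of Lemma \ref{4.theta} (upper bounds $C_qM_0e^{\nu^{max}_{f_0}t}$ for $\rho_\theta,U_\theta,T_\theta$ and lower bounds $C_0e^{-\nu^{max}_{f_0}t}$ for $\rho_\theta$, $C_qM_0^{-5/3}e^{-7\nu^{max}_{f_0}t/3}$ for $T_\theta$) and the collision-frequency bound $\nu=\rho^aT^b\le C_qM_0^{\,n}$ (from \eqref{upperrut}, since $M_0>1$) yields $\langle v\rangle^q|\Gamma_2$-integrand$|\le C_qM_0^{\,n}e^{C\nu^{max}_{f_0}t}$ for a generic $n>1$. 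For the two factors $\langle f,e_i\rangle_v$, I keep one of them as $\langle f,e_j\rangle_v=\int f e_j\,dv$, which I bound by $\int|f(u)|(1+|u|^2)\,du$ since the $e_j$ are at most quadratic in $v$, and I bound the other one by $\langle f,e_i\rangle_v\le C_q\|f(t)\|_{L^{\infty,q}_{x,v}}$ (valid for $q>5$ by Lemma \ref{rho,T esti}-type estimates). Multiplying through, $\langle v\rangle^q|\Gamma_2(f)|\le C_qM_0^{\,n}e^{C\nu^{max}_{f_0}t}\|f(t)\|_{L^{\infty,q}_{x,v}}\int_{\mathbb{R}^3}|f(u)|(1+|u|^2)\,du$.

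For $\Gamma_1(f)=(\mathbf{P}f-f)\sum_i\int_0^1 A_i(\theta)\,d\theta\,\langle f,e_i\rangle_v$, I would use $\langle v\rangle^q\|\mathbf{P}f-f\|\le\langle v\rangle^q(|\mathbf{P}f|+|f|)$; by \eqref{PfLq}-type reasoning $\langle v\rangle^q|\mathbf{P}f|\le C_q\|f(t)\|_{L^{\infty,q}_{x,v}}$, and trivially $\langle v\rangle^q|f|\le\|f(t)\|_{L^{\infty,q}_{x,v}}$, so the prefactor contributes one clean power of $\|f(t)\|_{L^{\infty,q}_{x,v}}$. Each $A_i(\theta)$ from \eqref{A12} is a rational expression in $\rho_\theta,U_\theta,T_\theta$, so Lemma \ref{4.theta} again bounds $\int_0^1 A_i(\theta)\,d\theta\le C_qM_0^{\,n}e^{C\nu^{max}_{f_0}t}$, and the remaining $\langle f,e_i\rangle_v$ I control by $\int|f(u)|(1+|u|^2)\,du$ as before. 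Adding the $\Gamma_1$ and $\Gamma_2$ bounds gives the claimed estimate with a (new, larger) generic $n>1$ and generic $C_q,C$.

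The only genuinely delicate point — and the main ``obstacle'' — is bookkeeping: verifying that the exponents $\alpha_{ij},\beta_{ij}$ and polynomial degrees of $\mathcal{P}_{ij}$ (as spelled out in Appendix \ref{Nonlincomp}) combine with the exponential rates in Lemma \ref{4.theta} to produce a single exponent $n$ and a single rate $C\nu^{max}_{f_0}$ that work uniformly in $i,j$ and in $\theta\in[0,1]$. Since Lemma \ref{nonlinMbar} already carried out essentially this computation with $\overbar{M}$ in place of $M_0$, one can import it verbatim after substituting the new macroscopic bounds; the extra care is only in ensuring that exactly \emph{one} factor of $\|f(t)\|_{L^{\infty,q}_{x,v}}$ is pulled out while the other quadratic factor is written as $\int|f(u)|(1+|u|^2)\,du$. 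No new analytic idea is needed beyond what Section 3 and Section 4 already provide.
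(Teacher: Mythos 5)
Your proposal is correct and follows essentially the same route as the paper: weight the second-derivative term of the local Maxwellian (and the $A_i(\theta)$ coefficients) by $\langle v\rangle^q$, absorb the $(v-U_\theta)$ factors into the Gaussian as in \eqref{nonlinsubsti}, substitute the initial-data-dependent bounds of Lemma \ref{4.macro} and Lemma \ref{4.theta} to get the factor $C_qM_0^{\,n}e^{C\nu^{max}_{f_0}t}$, and then split the two moment factors so that one is bounded by $C_q\Vert f(t)\Vert_{L^{\infty,q}_{x,v}}$ while the other is kept as $\int|f(u)|(1+|u|^2)\,du$. This is exactly the argument the paper gives.
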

\begin{proof}
In the same way with \eqref{claim}, we substitute Lemma \ref{4.theta} for the estimate \eqref{nonlinsubsti} to get
\begin{align} \label{second derv est}
	\begin{split}
\langle v \rangle^q\bigg|\left[\nabla_{(\rho_{\theta},\rho_{\theta} U_{\theta}, G_{\theta})}^2 \mathcal{M}(\theta)\right]_{ij}\bigg| &=\langle v \rangle^q \bigg|\frac{\mathcal{P}_{ij}((v-U_{\theta}),U_{\theta},T_{\theta})}{\rho_{\theta}^{\alpha_{ij}}T_{\theta}^{\beta_{ij}}}\mathcal{M}(\theta)\bigg|\\
&\leq C\bigg|(1+ |v-U_{\theta}|^q + |U_{\theta}|^q)\frac{\mathcal{P}_{ij}((v-U_{\theta}),U_{\theta},T_{\theta})}{\rho_{\theta}^{\alpha_{ij}}T_{\theta}^{\beta_{ij}}}\mathcal{M}(\theta)\bigg|\\
&\leq C \bigg|\frac{\mathcal{P}_{ij}(\sqrt{T_{\theta}},U_{\theta},T_{\theta})}{\rho_{\theta}^{\alpha_{ij}}T_{\theta}^{\beta_{ij}}}\bigg|\\
&\leq C \bigg|\frac{\mathcal{P}_{ij}(\sqrt{C_q M_0e^{\nu^{max}_{f_0}t}},C_q M_0e^{\nu^{max}_{f_0}t},C_q M_0e^{\nu^{max}_{f_0}t})}{(C_0e^{-\nu^{max}_{f_0}t})^{\alpha_{ij}}(C_qM_0^{-\frac{5}{3}}e^{-\frac{7}{3}\nu^{max}_{f_0}t})^{\beta_{ij}}}\bigg| \cr
&\leq C_q M_0^ne^{C\nu^{max}_{f_0}t},
	\end{split}
\end{align}
for generic positive constants $n>1$, $C$ and $C_q$. Recall the definition \eqref{A12} of $A_i(\theta)$, which comes from the linearization of $\nu$. From Lemma \ref{4.theta}, we have the following bounds:
\begin{align} \label{A12 bound}
\begin{split}
A_i(\theta)&\leq C_q M_0^ne^{C\nu^{max}_{f_0}t}, \quad \mbox{for} \quad i=1,\cdots,5.
\end{split}
\end{align}
Combining the definition \eqref{def gamma} of $\Gamma(f)$, \eqref{second derv est}, \eqref{A12 bound}, and
\begin{align*}
\left \vert \int_{\mathbb{R}^3} f(t,x,u) (1,u,|u|^2) du \right \vert \leq C_q \| f(t)\|_{L^{\infty,q}_{x,v}},
\end{align*}
we obtain the desired result.
\end{proof}

\subsection{Global decay estimate}
Multiplying \eqref{expan} by $\langle v \rangle^q$ for $q>10$, we obtain
\begin{align}\label{reform bgk}
	\partial_t(\langle v \rangle^qf) +v\cdot \nabla_x (\langle v \rangle^qf) +\langle v \rangle^qf = \langle v \rangle^q(\mathbf{P}f + \Gamma(f)),
\end{align}
where we defined $\mathbf{P}f$ and $\Gamma(f)$ in \eqref{pf} and \eqref{def gamma}, respectively.
\begin{proposition} \label{L^infty esti}
Let $f(t,x,v)$ be the solution of the reformulated BGK equation \eqref{reform bgk} with initial data $f_0$ satisfying \eqref{initial LB} and $\Vert f_0 \Vert_{L^{\infty,q}_{x,v}} \leq M_0$ for $q>10$. Under a priori assumption \eqref{Assumption},
it holds that
\begin{align} \label{main esti}
\begin{split}
\Vert f(t)\Vert_{L^{\infty,q}_{x,v}} &\leq C_{q} e^{-t/2} \left ( 1+\int_0^t \Vert f(s) \Vert_{L^{\infty,q}_{x,v}}ds\right)e^{\nu_{f_0}^{max} t_{eq}}\Vert f_0 \Vert_{L^{\infty,q}_{x,v}}^{n+1}  + C_q\left(1+ C_qM_0^ne^{C\nu_{f_0}^{max}t_{eq}}\overbar{M}\right)^2 \cr
& \quad \times \left(N^5(1-e^{-\lambda}) \overbar{M}+\frac{1}{N^{q-5}}+\frac{\overbar{M}}{N^{q-10}} + N^6 (\lambda^{-2}+N^3) \left(\mathcal{E}(F_0)+N^{\frac{3}{2}}\sqrt{\mathcal{E}(F_0)} \right) \right),
\end{split}
\end{align}
for some generic positive constants $n>1$ and $t\in [0,t_*]$, where $t_{eq}$, $\nu_{f_0}^{max}$, and initial relative entropy $\mathcal{E}(F_0)$ were defined in \eqref{def teq}, \eqref{numax}, and \eqref{relaentp}, respectively.
\end{proposition}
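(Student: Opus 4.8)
The plan is to iterate the Duhamel formula for the weighted equation \eqref{reform bgk} along characteristics and to control both $\mathbf{P}f$ and $\Gamma(f)$ pointwise by the single macroscopic weight
\[
\mathcal{I}(s,y):=\int_{\mathbb{R}^3}\langle u\rangle^2\,|f(s,y,u)|\,du .
\]
Starting from
\[
\langle v\rangle^q f(t,x,v)=e^{-t}\langle v\rangle^q f_0(x-vt,v)+\int_0^t e^{-(t-s)}\langle v\rangle^q\big(\mathbf{P}f+\Gamma(f)\big)(s,x-v(t-s),v)\,ds,
\]
the linear term is immediate: since each $\langle v\rangle^q|e_i(v)|\mu(v)$ is bounded and $\sum_{i=1}^5|e_i(v)|\le C\langle v\rangle^2$, we get $\langle v\rangle^q|\mathbf{P}f(s,y,v)|\le C_q\,\mathcal{I}(s,y)$. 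For the nonlinear term I would split $[0,t_*]=[0,t_{eq}]\cup[t_{eq},t_*]$: on $[0,t_{eq}]$ apply Lemma \ref{nonlinM0} and bound $e^{C\nu^{max}_{f_0}s}\le e^{C\nu^{max}_{f_0}t_{eq}}$; on $[t_{eq},t_*]$ invoke Lemma \ref{macrodelta}, so that $\rho_\theta,U_\theta,T_\theta$ are $O(\delta)$-close to $(1,0,1)$, the coefficients $\rho_\theta^{-\alpha_{ij}}T_\theta^{-\beta_{ij}}$ and the $A_i(\theta)$ of \eqref{A12} are $O(1)$, and $\Gamma$ is genuinely quadratic. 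In both cases one obtains, uniformly on $[0,t_*]$,
\[
\langle v\rangle^q|\Gamma(f)(s,y,v)|\le C_q\,\Lambda\,\|f(s)\|_{L^{\infty,q}_{x,v}}\,\mathcal{I}(s,y),\qquad \Lambda:=1+C_qM_0^{\,n}e^{C\nu^{max}_{f_0}t_{eq}} .
\]

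Next I would bound $\mathcal{I}(s,y)$ uniformly in $y$ by rerunning the analysis of Section 3: the split at $|v|=N$ of Lemma \ref{termdivide}, the estimate of Lemma \ref{Pfesti} for the $\mathbf{P}f$ contribution, and the argument of Lemma \ref{nonlinMbar} for the $\Gamma$ contribution, but feeding in the improved nonlinear bound above (prefactor $C_q\Lambda\overbar{M}$) in place of the $\overbar{M}^{\,n}e^{C_q\overbar{M}^a t}$ prefactor used there. Using the relative-entropy estimate Lemma \ref{L2 control} exactly as in the proof of Lemma \ref{Pfesti} — split into $\{|f|\le\mu\}$ and $\{|f|>\mu\}$, change variables $y=x-v(t-s)$ with Jacobian $(t-s)^{-3}$ and the torus-covering count — this yields, for all $(s,y)$,
\[
\mathcal{I}(s,y)\le C_q\|f_0\|_{L^{\infty,q}_{x,v}}e^{-s}+C_q\big(1+\Lambda\overbar{M}\big)[D],\qquad [D]:=N^5(1-e^{-\lambda})\overbar{M}+\frac{1}{N^{q-5}}+\frac{\overbar{M}}{N^{q-10}}+N^6(\lambda^{-2}+N^3)\big(\mathcal{E}(F_0)+N^{\frac{3}{2}}\sqrt{\mathcal{E}(F_0)}\big).
\]

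Finally I would substitute this bound into the two Duhamel estimates of the first step and integrate against $e^{-(t-s)}$. The $e^{-s}$-decaying part of $\mathcal{I}$ produces, via $\int_0^t e^{-(t-s)}e^{-s}\,ds=te^{-t}\le Ce^{-t/2}$ and $\int_0^t e^{-(t-s)}e^{-s}\|f(s)\|_{L^{\infty,q}_{x,v}}\,ds=e^{-t}\int_0^t\|f(s)\|_{L^{\infty,q}_{x,v}}\,ds$, together with the free-streaming term $e^{-t}\|f_0\|_{L^{\infty,q}_{x,v}}$, the first term of \eqref{main esti}; here the power $\|f_0\|_{L^{\infty,q}_{x,v}}^{n+1}$ arises from multiplying the $\|f_0\|$-weighted free part of $\mathcal{I}$ by the degree-$n$ prefactor $\Lambda$, and $e^{\nu^{max}_{f_0}t_{eq}}$ from $\Lambda$. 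The $[D]$ part is multiplied once by $C_q(1+\Lambda\overbar{M})$ in the $\mathcal{I}$-bound and a second time when passed through the nonlinear estimate and the a priori bound $\int_0^t e^{-(t-s)}\|f(s)\|_{L^{\infty,q}_{x,v}}\,ds\le\overbar{M}$, producing the factor $(1+C_qM_0^{\,n}e^{C\nu^{max}_{f_0}t_{eq}}\overbar{M})^2$; taking the essential supremum over $(x,v)$ gives \eqref{main esti}.

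The main obstacle is that $\mathcal{I}$ appears on both sides of its own estimate, through $\mathbf{P}f$ and $\Gamma(f)$, so one must use the relative-entropy control of Lemma \ref{L2 control} together with the characteristic change of variables to close this loop while leaving only the arbitrarily small remainder $[D]$; equally delicate is keeping every prefactor dependent only on the initial data (through $\nu^{max}_{f_0}$) apart from the explicitly tolerated $\overbar{M}$'s. In particular the regime split at $t_{eq}$ is essential: without replacing Lemma \ref{nonlinM0} by the near-equilibrium bound of Lemma \ref{macrodelta} on $[t_{eq},t_*]$ one would be left with a fatal $e^{C\nu^{max}_{f_0}t_*}$, whereas $t_*$ must be taken arbitrarily large in the bootstrap.
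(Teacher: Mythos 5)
Your proposal is correct and follows essentially the same route as the paper: Duhamel along characteristics, the split of the nonlinear time integral at $t_{eq}$ (Lemma \ref{nonlinM0} before, the near-equilibrium quadratic bound via Lemma \ref{macrodelta} after), and a second iteration controlled by the entropy estimate of Lemma \ref{L2 control} through the change of variables of Lemma \ref{Pfesti}. Your packaging of the second iteration as a standalone uniform bound on $\mathcal{I}(s,y)=\int\langle u\rangle^2|f(s,y,u)|\,du$ is just an organizational variant of the paper's step of substituting the Duhamel bound back into the bounded-velocity piece $B_2$, and it reproduces the same structure of the final estimate, including the $e^{-t/2}$ decay, the $\int_0^t\Vert f(s)\Vert_{L^{\infty,q}_{x,v}}ds$ factor, and the squared prefactor $\left(1+C_qM_0^{n}e^{C\nu^{max}_{f_0}t_{eq}}\overbar{M}\right)^2$.
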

\begin{proof}
Applying Duhamel's principle to 
\eqref{reform bgk}, we have
\begin{align} \label{first iteration}
	\begin{split}
	\vert \langle v \rangle^qf(t,x,v)\vert &\leq e^{-t}\Vert f_0 \Vert_{L^{\infty,q}_{x.v}}\\
	&\quad +\int_0^t e^{-(t-s)}\langle v \rangle^q\left \vert \mathbf{P}f(s,x-v(t-s),v)+\Gamma(f)(s,x-v(t-s),v)\right \vert ds.
	\end{split}
\end{align}
We split the estimate as follows:
\begin{align*}
I_1&= \int_0^t e^{-(t-s)}\langle v \rangle^q\left \vert \mathbf{P}f (s,x-v(t-s),v) \right \vert ds,\cr
I_2&=\int_0^t e^{-(t-s)}\langle v \rangle^q\left\vert \Gamma(f)(s,x-v(t-s),v)\right \vert ds.
\end{align*}
By the definition of $\mathbf{P}f$ in \eqref{pf}, we have
\begin{align} \label{I_1}
	\begin{split}
	I_1 &\leq \int_0^t e^{-(t-s)} \int_{\R^3} \vert f(s,x-v(t-s),u)\vert (1+\vert u \vert^2) duds. 
	\end{split}
\end{align}
For $I_2$, we split the time-integration region into $[0,t_{eq} ]$ and $[t_{eq},t]$:
\begin{align*}
	I_2&=\left ( \int_0^{t_{eq}} + \int_{t_{eq}}^t\right) e^{-(t-s)}\langle v \rangle^q\left\vert \Gamma(f)(s,x-v(t-s),v)\right \vert ds\cr
	&:=I_{2,1}+I_{2,2}.
\end{align*}
For $0\leq s \leq t_{eq}$, by Lemma \ref{nonlinM0}, it holds that
\begin{align} \label{I_21}
	I_{2,1} \leq C_qM_0^ne^{C\nu_{f_0}^{max}t_{eq}}\int_0^{t_{eq}} e^{-(t-s)} \Vert f(s)\Vert_{L^{\infty,q}_{x,v}} \int_{\R^3} \langle u \rangle^2 \vert f(s,x-v(t-s),u)\vert duds.
\end{align}
Recall that we controlled the macroscopic fields $|\rho-1|~, |U|,~ |T-1| \leq 2\delta$ for $t\geq t_{eq}$ in Lemma \ref{macrodelta}. Hence, using the same argument as in Lemma \ref{4.theta}, we obtain $|\rho_{\theta}-1|~, |U_{\theta}|,~ |T_{\theta}-1| \leq 2\delta$ when $t\geq t_{eq}$. This, combined with the argument in the proof of Lemma \ref{nonlinM0}, yields
\begin{align*}
	\langle v \rangle^q\Gamma(f)(t,x,v) \leq  C\Vert f(t)\Vert_{L^{\infty,q}_{x,v}}\int_{\mathbb{R}^3} \vert f(u) \vert (1+|u|^2) du, \quad \mbox{for} \quad t\geq t_{eq}.
\end{align*}
Thus, $I_{2,2}$ can be further bounded by
\begin{align} \label{I_22}
	I_{2,2} \leq C \int_{t_{eq}}^t e^{-(t-s)} \Vert f(s)\Vert_{L^{\infty,q}_{x,v}} \int_{\R^3} \langle u \rangle^2 \vert f(s,x-v(t-s),u)\vert duds.
\end{align}
Applying \eqref{I_1}, \eqref{I_21}  and \eqref{I_22} on \eqref{first iteration}, we obtain
\begin{align} \label{Duhamel arg}
	\begin{split}
	\vert \langle v \rangle^qf(t,x,v) \vert &\leq e^{-t} \Vert f_0 \Vert_{L^{\infty,q}_{x,v}}  + C\int_0^t e^{-(t-s)} \int_{\R^3} \langle u \rangle^2 \vert f(s,x-v(t-s),u)\vert duds\\
	&\quad +C_qM_0^ne^{C\nu_{f_0}^{max}t_{eq}} \int_0^t e^{-(t-s)} \Vert f(s)\Vert_{L^{\infty,q}_{x,v}} \int_{\R^3} \langle u \rangle^2 \vert f(s,x-v(t-s),u)\vert duds.
	\end{split}
\end{align}
We denote
\begin{align*}
B:= \int_{\R^3} \langle u \rangle^2 \vert f(s,x-v(t-s),u)\vert du,
\end{align*}
and split the integral region as $\{\vert u \vert \geq N\}$ and $\{\vert u \vert \leq N\}$ as follows:
\begin{align*}
B_1= \int_{\vert u \vert \geq N} \langle u \rangle^2 \vert f(s,x-v(t-s),u)\vert du, \cr
B_2= \int_{\vert u \vert \leq N} \langle u \rangle^2 \vert f(s,x-v(t-s),u)\vert du.
\end{align*}
Over $\{\vert u \vert \geq N\}$, it holds that
\begin{align}\label{B1}
B_1 \leq \int_{\vert u \vert \geq N} \langle u \rangle^{2-q}\vert \langle u \rangle^qf(s,x-v(t-s),u)\vert du \leq C_q \frac{\Vert f(s) \Vert_{L^{\infty,q}_{x,v}}}{N^{q-5}} \leq C_q \frac{\overbar{M}}{N^{q-5}} .
\end{align}
Applying \eqref{Duhamel arg} again to the integrand $\vert \langle u \rangle^qf(s,x-v(t-s),u)\vert$ on $B_2$, we get
\begin{align}\label{esti2}
\begin{split}
B_2 
&\leq C_qe^{-s} \Vert f_0 \Vert_{L^{\infty,q}_{x,v}} \\
&\quad + C\int_0^s e^{-(s-s')}\int_{\vert u \vert \leq N}\langle u \rangle^{2-q}\int_{\R^3} \langle u' \rangle^{2-q}\vert  \langle u' \rangle^qf(s',x-v(t-s)-u(s-s'),u')\vert du'duds'\\
&\quad +C_qM_0^ne^{C\nu_{f_0}^{max}t_{eq}} \int_0^s e^{-(s-s')} \Vert f(s') \Vert_{L^{\infty,q}_{x,v}}\int_{\vert u \vert \leq N}\langle u \rangle^{2-q}\\
&\hspace{3.4cm} \times \int_{\R^3} \langle u' \rangle^{2-q}\vert  \langle u' \rangle^q f(s',x-v(t-s)-u(s-s'),u')\vert du'duds'.
\end{split}
\end{align}
For the integral $\int du'du$ part, applying the same argument as in Lemma \ref{Pfesti}, we have
\begin{align}\label{samearg}
\begin{split}
\int_0^s e^{-(s-s')} \int_{\vert u \vert \leq N}\langle u \rangle^{2-q}\int_{\R^3} \langle u' \rangle^{2-q}\vert   \langle u' \rangle^qf(s',x-v(t-s)-u(s-s'),u')\vert du'duds' \cr
\leq C_qN^5(1-e^{-\lambda}) \overbar{M}+\frac{C_q}{N^{q-10}}\overbar{M} + CN^6 (\lambda^{-2}+N^3) \left(\mathcal{E}(F_0)+N^{\frac{3}{2}}\sqrt{\mathcal{E}(F_0)} \right).
\end{split}
\end{align}
Substituting the estimate \eqref{samearg} in \eqref{esti2} yields
\begin{align} \label{B2}
\begin{split}
B_2&\leq C_qe^{-s} \Vert f_0 \Vert_{L^{\infty,q}_{x,v}}+ C_q\left(1+ \sup_{0\leq s' \leq s} \Vert f(s') \Vert_{L^{\infty,q}_{x,v}}\right) \cr
&\quad \times \left(N^5(1-e^{-\lambda}) \overbar{M}+\frac{\overbar{M}}{N^{q-10}} +N^6 (\lambda^{-2}+N^3) \left(\mathcal{E}(F_0)+N^{\frac{3}{2}}\sqrt{\mathcal{E}(F_0)} \right) \right).
\end{split}
\end{align}
We combine \eqref{B1} and \eqref{B2} to obtain
\begin{align}\label{barB}
B& \leq C_qe^{-s} \Vert f_0 \Vert_{L^{\infty,q}_{x,v}} + \bar{B}\left(N,\lambda,\mathcal{E}(F_0),\overbar{M}\right),
\end{align}
where
\begin{align*}
\bar{B}(N,\lambda,&\mathcal{E}(F_0),\overbar{M}):= C_q\left(1+ \overbar{M}\right) \cr
& \times \left(N^5(1-e^{-\lambda}) \overbar{M}+\frac{1}{N^{q-5}}+\frac{\overbar{M}}{N^{q-10}} + N^6 (\lambda^{-2}+N^3) \left(\mathcal{E}(F_0)+N^{\frac{3}{2}}\sqrt{\mathcal{E}(F_0)} \right) \right).
\end{align*}
We substitute \eqref{barB} in \eqref{Duhamel arg} to have
\begin{align*}
\vert \langle v \rangle^qf(t,x,v) \vert &\leq e^{-t} \Vert f_0 \Vert_{L^{\infty,q}_{x,v}}  \cr
&\quad  + C_{q} e^{-t/2} \Vert f_0 \Vert_{L^{\infty,q}_{x,v}} + \bar{B}\\
&\quad +C_qM_0^ne^{C\nu_{f_0}^{max}t_{eq}} \left(e^{-t}\int_0^t \Vert f(s)\Vert_{L^{\infty,q}_{x,v}} ds\Vert f_0 \Vert_{L^{\infty,q}_{x,v}} + \sup_{0\leq s \leq t} \Vert f(s) \Vert_{L^{\infty,q}_{x,v}} \bar{B}\right).
\end{align*}
Therefore, we obtain
\begin{align*}
\vert \langle v \rangle^qf(t,x,v) \vert
&\leq C_qe^{-t/2}\left(1+\int_0^t \Vert f(s)\Vert_{L^{\infty,q}_{x,v}} ds\right) M_0^{n+1}e^{C\nu_{f_0}^{max}t_{eq}} + (1+C_qM_0^ne^{C\nu_{f_0}^{max}t_{eq}}\overbar{M}) \bar{B}.
\end{align*}
Finally, we complete the proof of Proposition \ref{L^infty esti} by taking $L^\infty_{x,v}$-norm above.
\end{proof}

\subsection{Proof of main theorem}
\begin{proof} [Proof of Theorem \ref{maintheorem}]
For convenience of notation, we rewrite \eqref{main esti} in Proposition \ref{L^infty esti} as
\begin{align} \label{main esti 3}
	\Vert f(t) \Vert_{L^{\infty,q}_{x,v}} \leq C_q e^{\nu_{f_0}^{max}t_{eq}} M_0^{n+1} e^{-t/2} \left( 1+ \int_0^t \Vert f(s) \Vert_{L^{\infty,q}_{x,v}} ds \right) +D,
\end{align}
where 
\begin{align} \label{def D}
	\begin{split}
	D&:= C_q\left(1+ C_qM_0^ne^{C\nu_{f_0}^{max}t_{eq}} \overbar{M}\right)^2 \cr
	&\quad  \times \left(N^5(1-e^{-\lambda}) \overbar{M}+\frac{1}{N^{q-5}}+\frac{\overbar{M}}{N^{q-10}} + N^6 (\lambda^{-2}+N^3) \left(\mathcal{E}(F_0)+N^{\frac{3}{2}}\sqrt{\mathcal{E}(F_0)} \right) \right).
	\end{split}
\end{align}
If we define
\begin{align*}
	Y(t):= 1+ \int_0^t \Vert f(s) \Vert_{L^{\infty,q}_{x,v}} ds,
\end{align*}
then we directly deduced from \eqref{main esti 3} that
\begin{align*}
	Y'(t) \leq  C_q e^{\nu_{f_0}^{max}t_{eq}} M_0^{n+1} e^{-t/2} Y(t) +D.
\end{align*}
By multiplying both sides above by $\exp\left\{ -2 C_q e^{\nu_{f_0}^{max}t_{eq}}M_0^{n+1}(1-e^{-t/2})\right\}$, we have
\begin{align*}
	\left( Y(t) \exp\left \{ -2 C_q e^{\nu_{f_0}^{max}t_{eq}}M_0^{n+1}(1-e^{-t/2})\right\}\right)' \leq D\exp\left\{ -2 C_q e^{\nu_{f_0}^{max}t_{eq}}M_0^{n+1}(1-e^{-t/2})\right\}\leq D,
\end{align*}
for all $0\leq t \leq t_*$. Taking the time integration over $[0,t]$, one obtains that
\begin{align} \label{main proof 1}
	\begin{split}
	Y(t) &\leq (1+Dt)\exp\left \{ 2C_q e^{\nu_{f_0}^{max}t_{eq}}M_0^{n+1}(1-e^{-t/2})\right\} \\
	&\leq (1+Dt)\exp\left \{ 2C_q e^{\nu_{f_0}^{max}t_{eq}}M_0^{n+1}\right\}.
	\end{split}
\end{align}
And then, substituting \eqref{main proof 1} into \eqref{main esti 3}, it holds that
\begin{align} \label{main proof 2}
	\Vert f(t) \Vert_{L^{\infty,q}_{x,v}} \leq C_q e^{\nu_{f_0}^{max}t_{eq}} M_0^{n+1}\exp\left \{ 2C_q e^{\nu_{f_0}^{max}t_{eq}}M_0^{n+1}\right\} (1+Dt) e^{-t/2}+D,
\end{align}
for all $0\leq t \leq t_{*}$. We now define
\begin{align} \label{M bar}
	\overbar{M}:= 4C_qM_0^{n+1}\exp\left \{\nu_{f_0}^{max}t_{eq}+ 2C_q e^{\nu_{f_0}^{max}t_{eq}}M_0^{n+1}\right\},
\end{align}
and
\begin{align} \label{t star}
	t_{*}:= 4 \left[ \ln \overbar{M} - \ln \delta  \right],
\end{align}
for $0<\delta<1$. From \eqref{main proof 2} and the definition \eqref{M bar} of $\overbar{M}$, we have
\begin{align} \label{main proof 3}
	\Vert f(t) \Vert_{L^{\infty,q}_{x,v}} \leq \frac{1}{4} \overbar{M} (1+Dt)e^{-t/2} +D \leq \frac{1}{4}\overbar{M}\left[1+2D\right]e^{-t/4}+D,
\end{align}
where we used $te^{-t/4}\leq 2$. Recall the definition \eqref{def D} of $D$. We first take $N=N(\overbar{M})>0$ large enough, then $\lambda=\lambda(N,\overbar{M})>0$ sufficiently small, and finally let $\mathcal{E}(F_0)\leq \varepsilon=\varepsilon(\delta,\lambda,N,\overbar{M})>0$ sufficiently small, so that
\begin{align*}
	D \leq \min\left\{\frac{\overbar{M}}{8},\frac{1}{4},\frac{\delta}{4} \right\}.
\end{align*}
Hence, it follows from \eqref{main proof 3} that
\begin{align} \label{closed apriori}
	\Vert f(t) \Vert_{L^{\infty,q}_{x,v}} \leq \frac{3}{8} \overbar{M} + \frac{1}{8}\overbar{M} \leq \frac{1}{2}\overbar{M},
\end{align}
for all $0\leq t \leq t_*$. Since $\overbar{M}$ depends on $M_0$ and $\delta$, the parameter $\epsilon$ also depends only on $M_0$ and $\delta$· Under $\mathcal{E}(F_0) \leq \varepsilon=\varepsilon(\delta,M_0)$, we have shown that a priori assumption \eqref{Assumption} is closed. \\

The next step is to extend the BGK solution to time interval $t\in[0,t_*]$ by using \eqref{closed apriori} and Proposition \ref{local}. Firstly, through Proposition \ref{local}, the solution of the BGK equation $f(t)$ exists on $t\in[0, t_0]$ satisfying
\begin{align*} 
	\sup_{0\leq t \leq t_0} \Vert f(t) \Vert_{L^{\infty,q}_{x,v}} \leq 2 \Vert f_0 \Vert_{L^{\infty,q}_{x,v}} \leq \frac{1}{2} \overbar{M}.
\end{align*}
We set $t_0$ as an initial time. Then Proposition \ref{local} gives the local existence time $\tilde{t}=t_0\left(\overbar{M}/2\right)$ satisfying
\begin{align*}
	\sup_{t_0 \leq t \leq t_0+\tilde{t}} \Vert f(t)\Vert_{L^{\infty,q}_{x,v}} \leq 2 \Vert f(t_0) \Vert_{L^{\infty,q}_{x,v}} \leq \overbar{M},
\end{align*}
when the initial data starts with $\Vert f(t_0) \Vert_{L^{\infty,q}_{x,v}} \leq \frac{1}{2} \overbar{M}$. Note that the a priori assumption holds for $t \in [0,t_0 + \tilde{t}]$. Hence, we can apply the estimate \eqref{closed apriori}, and then the BGK solution $f(t)$ has the following bound
\begin{align*}
	\sup_{0\leq t \leq t_0+\tilde{t}} \Vert f(t)\Vert_{L^{\infty,q}_{x,v}} \leq \frac{1}{2} \overbar{M}.
\end{align*}
Repeating the procedure until $t_*$, the BGK solution $f(t)$ exists and is unique on $t\in[0,t_*]$ and satisfies \eqref{closed apriori}. By definition \eqref{t star} of $t_*$ and the estimate \eqref{main proof 3}, we obtain that
\begin{align*}
	\Vert f(t_*) \Vert_{L^{\infty,q}_{x,v}} \leq \frac{3}{8}\delta + \frac{1}{4}\delta < \delta,
\end{align*}
due to $D \leq \delta/4$. The final step is to extend the BGK solution to $[t_*,\infty]$. From Proposition \ref{smallthm} in Section 6, we prove the global well-posedness and exponential decay of the BGK solution with small initial data $\Vert f_0 \Vert_{L^{\infty,q}_{x,v}}\leq \delta$. Hence, if we treat $\Vert f(t_*)\Vert_{L^{\infty,q}_{x,v}}$ as initial data in Proposition \ref{smallthm}, then it follows from (3) in Proposition \ref{smallthm} that
\begin{align*}
	\Vert f(t) \Vert_{L^{\infty,q}_{x,v}} \leq C_qe^{-C(t-t_*)}\Vert f(t_*) \Vert_{L^{\infty,q}_{x,v}},
\end{align*}
for all $t \geq t_*$.
\end{proof}

\section{Asymptotic stability for small amplitude regime}\label{secsmall}
In this section, we prove that if the initial $L^{\infty}$ norm is sufficiently small, then there exists a unique non-negative global solution.

\begin{proposition}\label{smallthm} Let $f_0$ satisfy the conservation laws \eqref{normal}. There exists $\delta > 0$ such that if $\|f_0 \|_{L^{\infty,q}_{x,v}}\leq \delta$, then there exists a unique global solution of the BGK model \eqref{expan}. Moreover, the following holds:
\begin{enumerate}
\item The solution $f(t,x,v)$ satisfies the conservation laws
	\begin{align*}
	\int_{\mathbb{T}^{3}\times \R^{3}} f (t,x,v)
	(1 ,  v , |v|^{2})dvdx = 0.
	\end{align*}
\item The solution is non-negative: $F(t,x,v)=\mu(v)+f(t,x,v)\geq 0$.
\item The perturbation decays exponentially:
\begin{align*}
\| f(t) \|_{L^{\infty,q}_{x,v}} &\leq C_q\delta e^{-k t},
\end{align*}
for positive constants $k$ and $C_q$.
\item Let $f$ and $\tilde{f}$ be solutions corresponding to the initial data $f_0$ and $\tilde{f}_0$, respectively. Then
\begin{align*}
\sup_{s\in[0,t]}e^{ks}\|(f-\tilde{f})(s)\|_{L^{\infty,q}_{x,v}}
&\leq C_q\|f_0-\tilde{f}_0\|_{L^{\infty,q}_{x,v}},
\end{align*}
for positive constants $k$ and $C_q$.
\end{enumerate}
\end{proposition}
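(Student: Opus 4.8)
The plan is a continuity argument: extend the local solution of Proposition \ref{local} under the a priori hypothesis $\sup_{0\le t\le T}\|f(t)\|_{L^{\infty,q}_{x,v}}\le\eta$ for a small $\eta$ to be fixed, and show that it self-improves. Since $F_0=\mu+f_0$ satisfies $\int F_0(x-vt,v)\,dv\ge 1-C_q\delta\ge\tfrac{1}{2}$, the lower bound \eqref{initial LB} holds automatically; more importantly, because $\delta$ (hence $\eta$) is small, the moment computation of Lemma \ref{rho,T esti} applied to $f$ gives $|\rho-1|,|\rho U|,|G|\le C_q\eta$, so by the reverse relations of Lemma \ref{linearize} and the argument of Lemma \ref{macrodelta} the macroscopic fields $(\rho,U,T)$ and their $\theta$-transitions $(\rho_\theta,U_\theta,T_\theta)$ stay within $O(\eta)$ of $(1,0,1)$ uniformly on $[0,T]$ (the situation of Lemma \ref{macrodelta}, now forced by smallness rather than by waiting until $t_{eq}$), and $\rho\ge\tfrac{1}{2}$. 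Hence the rational coefficients $A_i(\theta)$ and $[\nabla^2\mathcal M(\theta)]_{ij}$ appearing in $\Gamma$ are bounded by an absolute constant, and repeating the computation of Lemma \ref{nonlinM0} with all factors $e^{C\nu^{max}_{f_0}t}$ replaced by a universal constant gives
\begin{align*}
\langle v\rangle^q|\Gamma(f)(t,x,v)|\le C\,\|f(t)\|_{L^{\infty,q}_{x,v}}\int_{\R^3}\langle u\rangle^2|f(t,x,u)|\,du ,\qquad \|\Gamma(f)(t)\|_{L^2_{x,v}}\le C\,\|f(t)\|_{L^{\infty,q}_{x,v}}^2 ,
\end{align*}
the second bound because $\Gamma(f)$ is Gaussian-tailed in $v$ and $\mathbb{T}^3$ has finite measure. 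Thus $\Gamma$ is genuinely quadratic with time-independent constants.

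I would first establish exponential decay in $L^2_{x,v}$. The conservation laws \eqref{normal} are propagated (as in (1) below), so $f(t)$ lies in the subspace on which the linearized semigroup of $\mathcal L:=-v\cdot\nabla_x+\mathbf{P}-\mathbf{I}$ decays exponentially: on $\mathbb{T}^3$ the spatial frequencies satisfy $|k|\ge 1$, the zero mode lies in $\ker\mathbf{P}$ (its macroscopic part vanishing by \eqref{normal}) and is damped at rate $1$, and for $|k|\ge 1$ the operator $-ik\cdot v+\mathbf{P}-\mathbf{I}$ has a spectral gap bounded below uniformly in $k$ --- this is the linearized decay theory of \cite{Bello,Yun1,Yun22,Yun3}. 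Feeding $\|e^{t\mathcal L}g\|_{L^2_{x,v}}\le Ce^{-\lambda_0 t}\|g\|_{L^2_{x,v}}$ into Duhamel's formula for \eqref{reform BGK} together with the quadratic bound on $\|\Gamma(f)(s)\|_{L^2_{x,v}}$ and with $\|f_0\|_{L^2_{x,v}}\le C_q\|f_0\|_{L^{\infty,q}_{x,v}}\le C_q\delta$ gives
\begin{align*}
\|f(t)\|_{L^2_{x,v}}\le C_q\,\delta\, e^{-\lambda_0 t}+C\eta\int_0^t e^{-\lambda_0(t-s)}\|f(s)\|_{L^{\infty,q}_{x,v}}\,ds ,
\end{align*}
which, run in tandem with the $L^\infty$ estimate below, will close to $\|f(t)\|_{L^2_{x,v}}\le C_q\delta\,e^{-kt}$.

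The weighted $L^\infty$ decay then follows by iterating the mild formulation twice, exactly as in the estimate of the term $B$ in the proof of Proposition \ref{L^infty esti}. From
\begin{align*}
\langle v\rangle^q|f(t,x,v)|\le e^{-t}\|f_0\|_{L^{\infty,q}_{x,v}}+\int_0^t e^{-(t-s)}\langle v\rangle^q|\mathbf{P}f+\Gamma(f)|(s,x-v(t-s),v)\,ds ,
\end{align*}
one has $\langle v\rangle^q|\mathbf{P}f(s,y,v)|\lesssim\int_{\R^3}\langle u\rangle^2|f(s,y,u)|\,du$; substituting the Duhamel formula once more into this integrand, splitting $\{|u|\le N\}\cup\{|u|\ge N\}$, and on $\{|u|\le N\}$ performing the change of variables $z=x-v(t-s)-u(s-s')$ turns the inner velocity average into a space--time integral, whose part near $s'=s$ is absorbed by the a priori bound (contributing $N^5(1-e^{-\lambda})\eta$) and whose part on $\{s'\le s-\lambda\}$ is $\lesssim C_{N,\lambda}\|f(s')\|_{L^2_{x,v}}$ by Cauchy--Schwarz on the finite-measure set $\mathbb{T}^3\times\{|z|\le 2N\}$; the tail $\{|u|\ge N\}$ contributes $C_q\eta/N^{q-5}$, and $\Gamma(f)$ is handled identically with an extra factor $\|f(s)\|_{L^{\infty,q}_{x,v}}$. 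Working in the decay-weighted quantity $\|f\|_X:=\sup_{s\le t}e^{ks}\big(\|f(s)\|_{L^{\infty,q}_{x,v}}+\|f(s)\|_{L^2_{x,v}}\big)$ with $k<\min\{\tfrac{1}{2},\lambda_0\}$, the estimates above combine to $\|f\|_X\le C_q\delta+C_q\big(\eta+N^{-(q-5)}\big)\|f\|_X$; choosing first $N=N(q)$ large, then $\lambda=\lambda(N)$ small, then $\delta$ (hence $\eta:=2C_q\delta$) small closes the a priori assumption, so $T=\infty$ and (3) holds with $C_q\delta$ in place of $\delta$.

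It remains to record (1), (2), (4). Part (1): \eqref{cancellation} holds pointwise in $x$ since $\mathcal M(F)$ shares the first three $v$-moments of $F$, hence $\int\!\!\int\Gamma(f)(1,v,|v|^2)\,dv\,dx=0$ and, by \eqref{conservf}, $\int\!\!\int f(t)(1,v,|v|^2)=\int\!\!\int f_0(1,v,|v|^2)=0$. Part (2): from the representation $F(t,x,v)=e^{-\int_0^t\nu\,d\tau}F_0(x-vt,v)+\int_0^t e^{-\int_s^t\nu\,d\tau}\nu\,\mathcal M(F)(s,x-v(t-s),v)\,ds\ge 0$ whenever $F_0\ge 0$. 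Part (4), which contains uniqueness as the case $f_0=\tilde{f}_0$: apply the same double-Duhamel scheme to $\partial_t(f-\tilde{f})+v\cdot\nabla_x(f-\tilde{f})+(f-\tilde{f})=\mathbf{P}(f-\tilde{f})+(\Gamma(f)-\Gamma(\tilde{f}))$, using the Lipschitz estimate of Lemma \ref{nonlinfg} (with a small Lipschitz constant since both solutions are small) in place of the quadratic bound, to obtain $\sup_{s\le t}e^{ks}\|(f-\tilde{f})(s)\|_{L^{\infty,q}_{x,v}}\le C_q\|f_0-\tilde{f}_0\|_{L^{\infty,q}_{x,v}}$. The main obstacle is the linearized $L^2$-type decay in the polynomially weighted setting --- the uniform-in-$k$ spectral gap and the transfer of exponential decay from the Gaussian-weighted space (in which $\mathbf{P}$ is a bounded projection) to a norm compatible with polynomial velocity tails; once that input is available, the remainder is a routine specialization of the machinery of Sections 3--5 to the regime where the nonlinearity is already tame.
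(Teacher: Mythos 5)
Your overall architecture differs from the paper's: the paper follows Briant--Guo and splits $f=f_1+f_2$, where $f_1$ solves the purely damped equation $\partial_t f_1+v\cdot\nabla_x f_1+f_1=\Gamma(f_1+f_2)$ and carries the polynomial tail, while $f_2$ solves the linear equation with source $\mathbf{P}f_1$ and is estimated through the semigroup $S(t)$ of $(\partial_t+v\cdot\nabla_x+1)f=\mathbf{P}f$ in the \emph{Gaussian-weighted} space $L^\infty_{x,v}(\mu^{-\zeta})$ (Lemmas \ref{nonlinsubt}--\ref{f2lem}, then an iteration closed by velocity averaging). You instead propose a direct $L^2$--$L^\infty$ bootstrap on $f$ itself. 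That route is not merely a stylistic variant: it stands or falls on the linear decay input you invoke, and that input is exactly where your argument has a genuine gap.

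Concretely, you assert $\|e^{t\mathcal L}g\|_{L^2_{x,v}}\le Ce^{-\lambda_0 t}\|g\|_{L^2_{x,v}}$ in the \emph{unweighted} $L^2_{x,v}$ and feed it Duhamel data $f_0$ with only polynomial tails, citing \cite{Bello,Yun1,Yun22,Yun3}. Those references give decay in the Gaussian-weighted space $L^2_{x,v}(\mu^{-1/2})$, and $\|f_0\|_{L^2(\mu^{-1/2})}$ is \emph{not} controlled by $\|f_0\|_{L^{\infty,q}_{x,v}}$, so the cited theory does not apply to your initial layer. Worse, in unweighted $L^2_v$ the operator $\mathbf{P}$ is not even bounded: $\mathbf{P}f$ requires the moments $\int f\,e_i\,dv$ with $e_i$ polynomial, which are undefined for generic $f\in L^2_v$, and the naive energy identity $\tfrac12\tfrac{d}{dt}\|f\|_{L^2}^2=-\|f\|_{L^2}^2+\langle\mathbf{P}f,f\rangle$ yields no sign. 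So the ``uniform-in-$k$ spectral gap'' you appeal to is a statement about the Gaussian-weighted setting, and transferring it to a space containing $L^{\infty,q}$ data (an enlargement-of-the-space result, or a hypocoercivity estimate with polynomial weights) is precisely the nontrivial ingredient you defer in your last paragraph. The paper's decomposition is designed to avoid ever needing it: the polynomial-tail part $f_1$ is handled by the bare damping $e^{-t}$ plus the smallness of $\Gamma$ (no $\mathbf{P}f_1$ on its right-hand side), and the semigroup with $\mathbf{P}$ only ever acts on $f_2$, which has zero initial data and a Gaussian-tailed source $\mathbf{P}f_1$, so the $L^2(\mu^{-1/2})$ decay of \cite{Yun1,Yun3} together with Proposition 5.4 of \cite{Guo-Briant} suffices. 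Until you either prove the polynomially weighted linear decay or restructure along such a decomposition, the decay claims (3)--(4) are not established; the remaining items (conservation laws via $\Pi\Gamma=0$, non-negativity via the mild form, Lipschitz stability via Lemma \ref{nonlinfg}) are fine and essentially coincide with the paper's treatment.
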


To prove the above proposition, we follow the argument of \cite{Guo-Briant} where the asymptotic stability of the Boltzmann equation (with some boundary condition) is proved in $L^{\infty}$ with small initial data.
We decompose \eqref{expan} into the following two equations:
\begin{align*}
&\partial_t f_1 + v\cdot\nabla_x f_1 +  f_1 = \Gamma(f_1+f_2),  \qquad f_1(0,x,v)=f_0(x,v),  \\
&\partial_t f_2 + v\cdot\nabla_x f_2 = (\mathbf{P}f_2-f_2) + \mathbf{P}f_1,  \qquad f_2(0,x,v)=0,
\end{align*}
where $f = f_1 + f_2$. In the following, we study the above two equations to derive the existence and asymptotic behavior of $f_1$ and $f_2$.

\begin{lemma}\label{nonlinsubt}
	There exists $\delta > 0$ such that if $\|f(t)\|_{L^{\infty,q}_{x,v}}\leq \delta$ and $\|g(t)\|_{L^{\infty,q}_{x,v}}\leq \delta$, then we have
	\begin{align*}
	(1)~& \| \Gamma(f)(t)\|_{L^{\infty,q}_{x,v}} \leq C\|f(t)\|_{L^{\infty,q}_{x,v}}^2,  \\
	(2)~& \| (\Gamma(f)-\Gamma(g))(t) \|_{L^{\infty,q}_{x,v}} \leq C_{\delta}\|(f-g)(t)\|_{L^{\infty,q}_{x,v}},
	\end{align*}
	for positive constants $C$ and $0<C_{\delta}<1$.
\end{lemma}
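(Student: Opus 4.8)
The plan is to use the smallness $\|f(t)\|_{L^{\infty,q}_{x,v}},\|g(t)\|_{L^{\infty,q}_{x,v}}\le\delta$ to confine all the macroscopic fields to a fixed compact neighborhood of $(1,0,1)$, so that the strong nonlinearity of $\Gamma$ collapses to a genuinely quadratic one, and then to track where the extra factor of $\delta$ comes from in the difference estimate. First I would record the macroscopic bounds: since $\rho-1=\int f\,dv$, $\rho U=\int vf\,dv$, $3\rho T+\rho|U|^2-3=\int(|v|^2-3)f\,dv$, the computation in Lemma \ref{rho,T esti} gives $|\rho-1|,|\rho U|,|3\rho T+\rho|U|^2-3|\le C_q\delta$, hence for $\delta$ small $\tfrac12\le\rho\le 2$, $|U|\le C_q\delta$, $\tfrac12\le T\le 2$; by the convex-combination identities \eqref{transition}, argued exactly as in Lemma \ref{thetaesti}, the same holds for $(\rho_\theta,U_\theta,T_\theta)$ uniformly in $\theta\in[0,1]$. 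Consequently $\rho^aT^b$, $\rho_\theta^{-\alpha_{ij}}$, $T_\theta^{-\beta_{ij}}$, the polynomials $\mathcal{P}_{ij}(\sqrt{T_\theta},U_\theta,T_\theta)$ and the coefficients $A_i(\theta)$ in \eqref{def gamma} are all bounded by a generic constant $C$, and $\langle v\rangle^q\mathcal{M}(\theta)\le C$ after absorbing the $(v-U_\theta)$-polynomial against the Gaussian, just as in \eqref{nonlinsubsti}--\eqref{nonlinupper}.

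For part (1), I would use $\Gamma=\Gamma_1+\Gamma_2$ from Lemma \ref{linearize}. The bounds above give $\langle v\rangle^q|\Gamma_2(f)|\le C\,|\langle f,e_i\rangle_v|\,|\langle f,e_j\rangle_v|$ and $\langle v\rangle^q|\Gamma_1(f)|\le C\,\langle v\rangle^q|\mathbf{P}f-f|\,|\langle f,e_i\rangle_v|$; since $|\langle f,e_i\rangle_v|\le C_q\|f(t)\|_{L^{\infty,q}_{x,v}}$ (because $\int\langle u\rangle^{2-q}\,du<\infty$ for $q>5$) and $\|\mathbf{P}f-f\|_{L^{\infty,q}_{x,v}}\le C_q\|f(t)\|_{L^{\infty,q}_{x,v}}$ by \eqref{PfLq}, both pieces are bounded by $C\|f(t)\|_{L^{\infty,q}_{x,v}}^2$, which is (1).

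For part (2), I would run the four-term telescoping of Lemma \ref{nonlinfg}, separating the dependence of $\Gamma_2$ on $(\rho^{f},T^{f})$ (the $\rho^aT^b$ prefactor), on $(\rho_\theta^{f},U_\theta^{f},T_\theta^{f})$ (the coefficient and $\mathcal{M}^f(\theta)$), and on the two $\langle\cdot,e_i\rangle_v$ slots, and analogously for $\Gamma_1$. Every term of this decomposition is a product in which either (a) one slot carries a difference $\langle f-g,e_i\rangle_v$ and the complementary slot carries $\langle f,e_j\rangle_v$ or $\langle g,e_j\rangle_v$ together with coefficients that are all $O(1)$, so that in the small-amplitude regime the complementary factor is $O(\delta)$; or (b) a coefficient difference --- $(\rho^f)^\alpha-(\rho^g)^\alpha$, $(\rho^fT^f)^\beta-(\rho^gT^g)^\beta$, or $\langle v\rangle^q(\mathcal{M}^f(\theta)-\mathcal{M}^g(\theta))$ --- is multiplied by two genuine $\langle\cdot,e_i\rangle_v$ factors, and by the mean value theorem (estimates \eqref{rhopower}, \eqref{rhoTpower}, the Lipschitz continuity of $\mathcal{M}$ as in \cite{Perthame,Yun2}, and $|\rho^f-\rho^g|,|U^f-U^g|,|T^f-T^g|\le C\|f-g\|_{L^{\infty,q}_{x,v}}$) the coefficient difference is $\le C\|f-g\|_{L^{\infty,q}_{x,v}}$ while the two moment factors contribute $O(\delta^2)$. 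In either case each term is bounded by $C\delta\,\|(f-g)(t)\|_{L^{\infty,q}_{x,v}}$, hence $\|(\Gamma(f)-\Gamma(g))(t)\|_{L^{\infty,q}_{x,v}}\le C\delta\,\|(f-g)(t)\|_{L^{\infty,q}_{x,v}}$, and choosing $\delta<1/C$ yields the claim with $C_\delta:=C\delta\in(0,1)$.

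The main technical point --- already present in Lemma \ref{nonlinfg} --- is the uniform-in-$v$ Lipschitz continuity of $\langle v\rangle^q\mathcal{M}^f(\theta)$ in the macroscopic fields; this is standard once $\rho$ and $T$ are bounded away from $0$ and $\infty$ and $|U|$ is bounded, which smallness of $\delta$ guarantees, including for the mean-value intermediate states, which lie in the same compact region. Everything else is the bookkeeping of the telescoping carried out in Lemma \ref{nonlinfg}; the only genuinely new observation is that in the small-amplitude regime every telescoped term retains a spare factor of $\delta$, which is exactly what upgrades the Lipschitz constant to be strictly less than one.
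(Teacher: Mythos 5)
Your proposal is correct and follows essentially the same route as the paper: deduce from the smallness of $\|f\|_{L^{\infty,q}_{x,v}},\|g\|_{L^{\infty,q}_{x,v}}$ that $(\rho,U,T)$ and $(\rho_\theta,U_\theta,T_\theta)$ stay in a fixed neighborhood of $(1,0,1)$ (the paper quotes Lemma \ref{macrodelta} where you rederive it from Lemma \ref{rho,T esti}, an immaterial difference), bound $\Gamma_1,\Gamma_2$ quadratically via the moment factors $\langle f,e_i\rangle_v\le C_q\|f\|_{L^{\infty,q}_{x,v}}$, and obtain (2) by rerunning the telescoping of Lemma \ref{nonlinfg} while noting that each term retains a spare factor of $\delta$, which makes the Lipschitz constant $C_\delta=C\delta<1$. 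No gaps.
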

\begin{proof}
	(1) By Lemma \ref{macrodelta}, the assumption $\|f\|_{L^{\infty,q}_{x,v}}\leq \delta$ implies
	\begin{align*}
	|\rho-1|,~|U|,~|T-1| \leq 2\delta.
	\end{align*}
	So that, for a sufficiently small $\delta$, the nonlinear term $\Gamma_1$ can be reduced to
	\begin{align*}
	\big|\langle v \rangle^q \Gamma_1(f)\big| &\leq C\big|\langle v \rangle^q(\mathbf{P}f-f)\big| \Big|\int_{\mathbb{R}^3} (1+|v|^2)f  dv\Big| \leq C\|f\|_{L^{\infty,q}_{x,v}}^2.
	\end{align*}
	Applying a similar process to Lemma \ref{nonlinM0}, we also have
	\begin{align*}
	|\langle v \rangle^q\Gamma_2(f)| &= \rho^a T^b\sum_{1\leq i,j\leq 5}\int_0^1 \langle v \rangle^q \frac{\mathcal{P}_{ij}((v-U_{\theta}),U_{\theta},T_{\theta})}{\rho_{\theta}^{\alpha_{ij}}T_{\theta}^{\beta_{ij}}}\mathcal{M}(\theta)  (1-\theta) d\theta \int_{\mathbb{R}^3}f e_i dv \int_{\mathbb{R}^3}f e_jdv \cr
	&\leq C\|f\|_{L^{\infty,q}_{x,v}}^2.
	\end{align*}
(2) Since we have $|\rho-1|,~|U|,~|T-1| \leq 2\delta$ for $f$ and $g$, applying the same argument as in Lemma \ref{nonlinfg} with
\begin{align*}
\int_{\mathbb{R}^3} f e_i dv \leq C\int_{\mathbb{R}^3} (1+|v|+|v|^2)|f| dv \leq C_q \|f\|_{L^{\infty,q}_{x,v}}\leq C_q \delta , \quad \mbox{for} \quad i=1,\cdots,5,
\end{align*}
we can obtain $C_{\delta}<1$ satisfying
\begin{align*}
\| \Gamma(f)-\Gamma(g) \|_{L^{\infty,q}_{x,v}} \leq C_{\delta}\|f-g\|_{L^{\infty,q}_{x,v}},
\end{align*}
for sufficiently small $\delta$.
\end{proof}

\begin{lemma}\label{f1lem} There exists $\delta >0$ such that if $\|f_0\|_{L^{\infty,q}_{x,v}}\leq \delta$  and $\|g(t)\|_{L^{\infty,q}_{x,v}}\leq C_q\delta  e^{-(1-\epsilon)t}$ for any $\epsilon\in(0,1)$, then there exists a solution $f_1$ to the following equation
\begin{align*}
&\partial_t f_1 + v\cdot\nabla_x f_1 +  f_1 = \Gamma(f_1+g),  \qquad f_1(0,x,v)=f_0(x,v),
\end{align*}
satisfying
\begin{align*}
\|f_1(t)\|_{L^{\infty,q}_{x,v}} \leq e^{-(1-\epsilon)t}(\|f_0\|_{L^{\infty,q}_{x,v}} + \delta).
\end{align*}
\end{lemma}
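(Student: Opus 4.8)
The plan is to construct $f_1$ by a contraction mapping argument carried out in a function space that already encodes the claimed exponential decay. Writing the equation in mild form along the characteristics $x - v(t-s)$, a solution must satisfy
\begin{align*}
f_1(t,x,v) = e^{-t} f_0(x-vt,v) + \int_0^t e^{-(t-s)}\, \Gamma(f_1+g)(s, x-v(t-s), v)\, ds .
\end{align*}
Accordingly I would introduce the complete metric space
\begin{align*}
X := \Big\{ h : \ \|h(t)\|_{L^{\infty,q}_{x,v}} \leq e^{-(1-\epsilon)t}\big(\|f_0\|_{L^{\infty,q}_{x,v}} + \delta\big) \ \text{ for all } t \geq 0 \Big\},
\end{align*}
with metric $d(h_1,h_2) := \sup_{t\geq 0} \|(h_1-h_2)(t)\|_{L^{\infty,q}_{x,v}}$, and define $\mathcal{T}h$ to be the right-hand side above with $f_1$ replaced by $h$. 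The goal is then to show that $\mathcal{T}$ maps $X$ into itself and is a contraction, so that Banach's fixed point theorem provides a unique $f_1 \in X$ solving the equation, the membership $f_1 \in X$ being exactly the asserted bound.

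For the self-mapping property, observe that for $h \in X$ and $g$ as in the hypothesis one has $\|(h+g)(s)\|_{L^{\infty,q}_{x,v}} \leq (2+C_q)\delta\, e^{-(1-\epsilon)s}$; choosing $\delta$ small enough that $(2+C_q)\delta$ lies below the smallness threshold of Lemma \ref{nonlinsubt}, part (1) of that lemma gives $\|\Gamma((h+g)(s))\|_{L^{\infty,q}_{x,v}} \leq C(2+C_q)^2\delta^2\, e^{-2(1-\epsilon)s}$ with $C$ a fixed generic constant. Inserting this into $\mathcal{T}$ and using the elementary convolution bound $\int_0^t e^{-(t-s)} e^{-2(1-\epsilon)s}\, ds \leq C_\epsilon\, e^{-(1-\epsilon)t}$, together with $e^{-t} \leq e^{-(1-\epsilon)t}$, yields
\begin{align*}
\|(\mathcal{T}h)(t)\|_{L^{\infty,q}_{x,v}} \leq e^{-(1-\epsilon)t}\big(\|f_0\|_{L^{\infty,q}_{x,v}} + C_\epsilon C(2+C_q)^2 \delta^2\big),
\end{align*}
which is $\leq e^{-(1-\epsilon)t}(\|f_0\|_{L^{\infty,q}_{x,v}} + \delta)$ as soon as $\delta$ is chosen small (depending on $\epsilon$ and the generic constants). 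For the contraction, subtracting the mild formulations of $\mathcal{T}h_1$ and $\mathcal{T}h_2$ and applying Lemma \ref{nonlinsubt}(2) — whose Lipschitz constant $C_\delta$ can be taken strictly less than $1$ for $\delta$ small — together with $\int_0^t e^{-(t-s)}\, ds = 1 - e^{-t} \leq 1$ gives $d(\mathcal{T}h_1,\mathcal{T}h_2) \leq C_\delta\, d(h_1,h_2)$ with $C_\delta < 1$.

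The only genuinely delicate point is the bookkeeping of the exponential rates: the quadratic nonlinearity turns the $e^{-(1-\epsilon)s}$ decay of $h+g$ into $e^{-2(1-\epsilon)s}$, and the damped free transport $\int_0^t e^{-(t-s)}(\,\cdot\,)\,ds$ must convert this back into $e^{-(1-\epsilon)t}$. This works because $2(1-\epsilon) > 1-\epsilon$ for $\epsilon \in (0,1)$, but the resulting constant $C_\epsilon$ degenerates as $\epsilon \to 1/2$ (where the relevant integral is the borderline $e^{-t}\int_0^t e^{s}e^{-s}\,ds = t\,e^{-t}$), and one checks separately that $t\,e^{-t} \lesssim e^{-(1-\epsilon)t}$ there. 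Everything else — well-posedness of $\mathcal{T}$, completeness of $X$, and the nonlinear estimates themselves — is routine or directly quoted from Lemma \ref{nonlinsubt}.
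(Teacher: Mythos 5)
Your proposal is correct and is essentially the paper's argument in fixed-point packaging: the paper runs the Picard iterates of your map $\mathcal{T}$ explicitly, proving the uniform bound $\sup_s e^{(1-\epsilon)s}\|f_1^n(s)\|_{L^{\infty,q}_{x,v}}\leq \|f_0\|_{L^{\infty,q}_{x,v}}+\delta$ by induction via Lemma \ref{nonlinsubt}(1) (your self-mapping step) and the Cauchy property via Lemma \ref{nonlinsubt}(2) with $C_\delta<1$ (your contraction step). The only cosmetic difference is your explicit tracking of the convolution constant $C_\epsilon$, which the paper absorbs into generic constants.
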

\begin{proof}
We define the following iteration for $f_1^n$ starting with $f_1^0(t,x,v)=0$
\begin{align*}
&\partial_t f_1^{n+1} + v\cdot\nabla_x f_1^{n+1} +  f_1^{n+1} = \Gamma(f_1^n+g),  \qquad f_1(0,x,v)=f_0(x,v).
\end{align*}
Then we prove that $\{f_1^n\}_{n\geq0}$ is uniformly bounded and Cauchy. We write the equation in the mild form:
\begin{align}\label{mildf1}
f_1^{n+1}(t,x,v) = e^{-t}f_0(x-vt,v)+ \int_0^t  e^{-(t-s)} \Gamma(f_1^n+g)(s,x-v(t-s),v) ds.
\end{align}
Taking $L^{\infty,q}_{x,v}$ on \eqref{mildf1} yields
\begin{align*}
\|f_1^{n+1}(t)\|_{L^{\infty,q}_{x,v}} &\leq e^{-t}\|f_0\|_{L^{\infty,q}_{x,v}}+ \int_0^t  e^{-(t-s)} \|\Gamma(f_1^n+g)(s)\|_{L^{\infty,q}_{x,v}} ds.
\end{align*}
We multiply $e^{(1-\epsilon)t}$ on both sides and use Lemma \ref{nonlinsubt} to get
\begin{align*}
e^{(1-\epsilon)t}\|f_1^{n+1}(t)\|_{L^{\infty,q}_{x,v}} &\leq e^{-\epsilon t}\|f_0\|_{L^{\infty,q}_{x,v}}+ \int_0^t  e^{(1-\epsilon)s}e^{-\epsilon(t-s)} \|\Gamma(f_1^n+g)(s)\|_{L^{\infty,q}_{x,v}} ds \cr
&\leq \|f_0\|_{L^{\infty,q}_{x,v}}+C\sup_{s\in[0,t]} e^{(1-\epsilon)s}\|(f_1^n+g)(s)\|_{L^{\infty,q}_{x,v}}^2. 
\end{align*}
Taking supremum on each side, we have
\begin{align*}
\sup_{s\in[0,t]}e^{(1-\epsilon)s}\|f_1^{n+1}(s)\|_{L^{\infty,q}_{x,v}} &\leq \|f_0\|_{L^{\infty,q}_{x,v}} + C\sup_{s\in[0,t]} e^{(1-\epsilon)s}\left(\|f_1^n(s)\|_{L^{\infty,q}_{x,v}}^2\right) +CC_q^2\delta^2.
\end{align*}
Thus, if the $n$-th step has the following bound,
\begin{align*}
\sup_{s\in[0,t]}e^{(1-\epsilon)s}\|f_1^n(s)\|_{L^{\infty,q}_{x,v}}  \leq \|f_0\|_{L^{\infty,q}_{x,v}} + \delta,
\end{align*}
then the $(n+1)$-th step satisfies
\begin{align*}
\sup_{s\in[0,t]}e^{(1-\epsilon)s}\|f_1^{n+1}(s)\|_{L^{\infty,q}_{x,v}}&= \|f_0\|_{L^{\infty,q}_{x,v}} + C(2\delta)^2+CC_q^2\delta^2 \leq \|f_0\|_{L^{\infty,q}_{x,v}} + \delta,
\end{align*}
for sufficiently small $\delta$ satisfying $C(4+C_q^2)\delta^2 \leq \delta$. This gives the desired uniform boundedness:
\begin{align*}
\sup_{s\in[0,t]}e^{(1-\epsilon)s}\|f_1^n(s)\|_{L^{\infty,q}_{x,v}}  \leq \|f_0\|_{L^{\infty,q}_{x,v}} + \delta.
\end{align*}
To prove that $\{f_1^n\}$ is a Cauchy sequence, we consider the difference between $f^{n+1}$ and $f^n$:
\begin{align*}
e^t(f_1^{n+1}-f_1^n)(t,x,v) = \int_0^t  e^s \left[\Gamma(f_1^n+g)-\Gamma(f_1^{n-1}+g)\right](s,x-v(t-s),v) ds.
\end{align*}
Since, for sufficiently small $\delta$, we have for all $n\geq 0 $
\begin{align*}
\int_{\mathbb{R}^3}(1,v,|v|^2)(f_1^n+g) dv \leq \|f_1^n\|_{L^{\infty,q}_{x,v}} + \|g\|_{L^{\infty,q}_{x,v}} \leq (C_q+1)\delta,
\end{align*}
we can employ Lemma \ref{nonlinsubt} (2) to get
\begin{align*}
\| \Gamma(f^n+g)-\Gamma(f^{n-1}+g) \|_{L^{\infty,q}_{x,v}} \leq C_{\delta}\|f^n-f^{n-1}\|_{L^{\infty,q}_{x,v}},
\end{align*}
for $n\geq 1 $. Therefore we have
\begin{align*}
\sup_{s\in[0,t]}e^{(1-\epsilon)s}\|(f_1^{n+1}-f_1^n)(s)\|_{L^{\infty,q}_{x,v}} & \leq C_{\delta} \sup_{s\in[0,t]}e^{(1-\epsilon)s} \|(f_1^n-f_1^{n-1})(s)\|_{L^{\infty,q}_{x,v}},
\end{align*}
for $0<C_{\delta}<1$. This completes the proof.
\end{proof}

\hide
{\color{blue}
\begin{lemma} \label{St group}
	Let $f$ solve
	\begin{eqnarray}
	\begin{split}
		(\p_{t} + v\cdot\nabla_{x} +1)f = \mathbf{P}f,\quad \Pi f = 0,
	\end{split}
	\end{eqnarray}
	with initial data  $f(0) = f_0$ which satisfies compability condition $\Pi f_0 = 0$. Then solution $f(t)$ is written by semi-group $S(t)f_0$ and
	\begin{equation*}
	\begin{split}
		\|S(t)f_0\|_{L_{x,v}^{\infty,q}(\mu^{-\zeta})} &\leq e^{-t} \|f_0\|_{L_{x,v}^{\infty,q}(\mu^{-\zeta})}.
	\end{split}
	\end{equation*}
\end{lemma}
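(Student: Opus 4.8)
The plan is to prove the estimate by the $L^2$--$L^\infty$ bootstrap of Guo, using the dissipation identity $\langle(\mathbf{P}-\mathbf{I})g,g\rangle_{v}=-\|(\mathbf{I}-\mathbf{P})g\|_{L^2_v}^2$ together with the conservation constraint $\Pi f=0$. Writing $f(t)=S(t)f_0$, I would first check that $\Pi f(t)=0$ for all $t$ (integrate the equation against $1,v,|v|^2$ over $\mathbb{T}^3\times\mathbb{R}^3$ and use that $\mathbf{P}f$ and $f$ share their first five moments), and then establish an $L^2_{x,v}$ decay: testing against $f$ gives $\frac{1}{2}\frac{d}{dt}\|f\|_{L^2_{x,v}}^2=-\|(\mathbf{I}-\mathbf{P})f\|_{L^2_{x,v}}^2$, which damps only the microscopic component, so the macroscopic component must be recovered by hypocoercivity. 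Expanding in Fourier modes in $x$, the zero mode solves $\partial_t\hat f(0)=(\mathbf{P}-\mathbf{I})\hat f(0)$, and since $\Pi f_0=0$ forces $\mathbf{P}\hat f_0(0)=0$, the zero mode obeys the \emph{exact} identity $\hat f(0,t)=e^{-t}\hat f_0(0)$; for $k\neq0$, where $|k|$ is bounded below on $\mathbb{T}^3$, the operator $(\mathbf{P}-\mathbf{I})-ik\cdot v$ has a spectral gap uniform in $|k|\ge1$ — obtained by adding to $\|f\|_{L^2_{x,v}}^2$ a small multiple of a Kawashima-type interaction functional built from the macroscopic fields and their first spatial derivatives — so that altogether $\|S(t)f_0\|_{L^2_{x,v}}\le Ce^{-\sigma t}\|f_0\|_{L^2_{x,v}}$ for some $\sigma\in(0,1]$.

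The second step passes from $L^2$ to the weighted $L^\infty$ norm through the mild formulation $f(t,x,v)=e^{-t}f_0(x-vt,v)+\int_0^t e^{-(t-s)}(\mathbf{P}f)(s,x-v(t-s),v)\,ds$. Because $v$ is constant along the free characteristics, both weights $\langle v\rangle^q$ and $\mu^{-\zeta}$ (with $0<\zeta<1$) commute with the transport semigroup, so the only quantity to estimate is $\langle v\rangle^q\mu^{-\zeta}\mathbf{P}f=\mu^{1-\zeta}(v)\sum_{i}\langle v\rangle^q e_i(v)\,\langle f,e_i\rangle_v$, where $\langle v\rangle^q e_i(v)\mu^{1-\zeta}(v)$ is bounded and $\langle f,e_i\rangle_v$ is an $L^1_v$-type average. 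Iterating Duhamel once more and performing the change of variables $u\mapsto y=x-v(t-s)-u(s-s')$ exactly as in the proof of Lemma~\ref{Pfesti} converts the velocity integral into a space integral, which Cauchy--Schwarz bounds by $\|f(s')\|_{L^2_{x,v}}\le Ce^{-\sigma s'}\|f_0\|_{L^{\infty,q}_{x,v}(\mu^{-\zeta})}$; splitting off the short layer $s'\in[s-\lambda,s]$, choosing $\lambda$ small, and closing the resulting Grönwall-type inequality gives $\|S(t)f_0\|_{L^{\infty,q}_{x,v}(\mu^{-\zeta})}\le Ce^{-\sigma t}\|f_0\|_{L^{\infty,q}_{x,v}(\mu^{-\zeta})}$.

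The step I expect to be the real obstacle is sharpening $Ce^{-\sigma t}$ to the stated bound with rate exactly $e^{-t}$ and constant $1$: one has to show that, beyond the leading transport layer $e^{-t}\|f_0\|_{L^{\infty,q}_{x,v}(\mu^{-\zeta})}$, the $\mathbf{P}f$ contribution decays \emph{strictly faster} than $e^{-t}$, which uses both the exact $e^{-t}$ decay of the zero Fourier mode and the rapidly decaying factor $\mu^{1-\zeta}$ carried by $\mathbf{P}f$. For the purpose of the application, however, it suffices to settle for $Ce^{-(1-\epsilon)t}$ with $\epsilon>0$ arbitrarily small, which is precisely the form that is needed when this lemma feeds into Lemma~\ref{f1lem}. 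A secondary technical point is making sure the compensating functional in Step~1 actually controls every macroscopic mode; this is exactly where the full constraint $\Pi f=0$ — rather than merely partial conservation — is indispensable.
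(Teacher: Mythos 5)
Your two-step architecture---an $L^2_{x,v}$ decay estimate for the linear semigroup followed by an $L^2$-to-$L^\infty$ lift through the mild formulation with a double Duhamel iteration---is essentially the argument the paper runs when it introduces $S(t)$ inside the proof of Lemma \ref{f2lem}: there the mild form of \eqref{linearf2} is iterated, the bound $\|\mathbf{P}f\|_{L^{\infty}_{x,v}(\mu^{-\zeta})}\leq C\|f\|_{L^{2}_{x,v}(\mu^{-1/2})}$ is used, the $L^2$ decay $\|f(t)\|_{L^2_{x,v}(\mu^{-1/2})}\leq Ce^{-\eta t}\|f_0\|_{L^2_{x,v}(\mu^{-1/2})}$ is quoted from \cite{Yun1,Yun3}, and Proposition 5.4 of \cite{Guo-Briant} converts the resulting $Ce^{-\sigma t}$-type bound into \eqref{St est}. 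The only genuine divergence is that you re-derive the $L^2$ decay by a Fourier--Kawashima compensator argument (with the correct observation that $\Pi f_0=0$ kills the macroscopic part of the zero mode, which then decays exactly like $e^{-t}$), making that step self-contained where the paper simply cites it; that is a legitimate alternative, at the price of having to actually construct and verify the compensating functional for the linearized BGK operator rather than invoking the known result.

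Concerning the sharp inequality: as you suspected, your argument does not reach the rate $e^{-t}$ with constant $1$ stated in the lemma---but neither does the paper's. What the paper actually proves and uses is only \eqref{St est}, namely $\|S(t)f_0\|_{L^{\infty}_{x,v}(\mu^{-\zeta})}\leq e^{-\eta_s t}\|f_0\|_{L^{\infty}_{x,v}(\mu^{-\zeta})}$ for some $\eta_s\in(0,1)$, and Lemma \ref{f2lem} is formulated for $\epsilon\in(1-\eta_s,1)$ precisely so that this weaker form suffices; your fallback $Ce^{-(1-\epsilon)t}$ is the same statement in different clothing and is what feeds Lemma \ref{f1lem}. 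So relative to the lemma as literally written there is a gap (the constant-$1$, rate-$1$ bound is not established), and indeed it should not be expected: the macroscopic component of $S(t)$ relaxes only through its coupling with transport and cannot decay at the full collision rate $1$, which is exactly why the paper works with $\eta_s<1$. In short, your proof matches the paper's actual treatment; only the sharp form displayed in the statement is unproven, and it is also unused.
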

} 
\unhide

Before we proceed to the next lemma, we define 
\begin{align*}
\Pi(f) &:= \int_{\mathbb{T}^3} \mathbf{P}f dx \cr
&=\int_{\mathbb{T}^3\times\mathbb{R}^3}fdvdx \mu+\int_{\mathbb{T}^3\times\mathbb{R}^3}fvdvdx\cdot (v\mu) +\int_{\mathbb{T}^3\times\mathbb{R}^3}f\frac{|v|^2-3}{\sqrt{6}}dvdx\left(\frac{|v|^2-3}{\sqrt{6}}\mu\right).
\end{align*}
Note that, unlike projection operator $\mathbf{P}$, $\Pi$ does commutes with transport $v\cdot \nabla_x$.

\begin{lemma}\label{f2lem} Let $\|g\|_{L_t^{\infty}L^{\infty,q}_{x,v}} < \infty$. Then there exists a unique solution $f_2\in L_t^{\infty}L^{\infty}_{x,v}(\mu^{-\zeta})$ to
\begin{align} \label{f2 eq}
\partial_t f_2 + v\cdot\nabla_x f_2 = (\mathbf{P}f_2-f_2) + \mathbf{P}g,  \qquad f_2(0,x,v)=0,
\end{align}
where $\zeta \in [0,1)$. Moreover, if $\Pi(f_2+g)=0$ and $\|g(t)\|_{L^{\infty,q}_{x,v}}\leq e^{-(1-\epsilon)t}(\|f_0\|_{L^{\infty,q}_{x,v}} + \delta)$ for $\epsilon\in(1-\eta_s,1)$ ($\eta_s$ will be determined in the proof), then we have
\begin{align*}
\|f_2(t)\|_{L^{\infty}_{x,v}(\mu^{-\zeta})}\leq C_q\delta e^{-(1-\epsilon)t}.
\end{align*}
\end{lemma}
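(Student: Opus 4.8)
The plan is to read \eqref{f2 eq} as a \emph{linear} inhomogeneous transport--relaxation equation for $f_2$ sourced by $\mathbf{P}g$, and to run an $L^2$--$L^\infty$ argument in the spirit of Lemma \ref{Pfesti} and Proposition \ref{L^infty esti}, the only genuinely new feature being that the $L^2$ input now \emph{decays} in time at the rate of $g$. For existence and uniqueness I would use the mild form
\[
f_2(t,x,v)=\int_0^t e^{-(t-s)}\big[\mathbf{P}f_2+\mathbf{P}g\big]\big(s,x-v(t-s),v\big)\,ds ,
\]
and the fact that $\mathbf{P}h=\sum_{i=1}^5\langle h,e_i\rangle_v(e_i\mu)$ carries a full Gaussian factor $\mu(v)$: for $\zeta\in(0,1)$ this gives $\|\mathbf{P}h(s)\|_{L^\infty_{x,v}(\mu^{-\zeta})}\le C_\zeta\|h(s)\|_{L^\infty_{x,v}(\mu^{-\zeta})}$ and $\|\mathbf{P}g(s)\|_{L^\infty_{x,v}(\mu^{-\zeta})}\le C_\zeta\|g(s)\|_{L^{\infty,q}_{x,v}}$. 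Hence $f_2\mapsto(\text{RHS})$ is a contraction on $L^\infty_tL^\infty_{x,v}(\mu^{-\zeta})$ over a short time interval; iterating and gluing gives a unique global solution, and a Gr\"onwall estimate on the difference of two solutions yields uniqueness. The endpoint $\zeta=0$ follows from $\|\cdot\|_{L^\infty_{x,v}}\le\|\cdot\|_{L^\infty_{x,v}(\mu^{-\zeta'})}$; moreover the mild form shows that $f_2$ automatically inherits a Gaussian tail, so the constructed solution lies in $L^\infty_{x,v}(\mu^{-\zeta'})$ for \emph{every} $\zeta'\in(0,1)$, in particular in $L^2_{x,v}(\mu^{-1/2})$.

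For the decay statement I would first reduce to an $L^2$ bound. Testing \eqref{f2 eq} against $(1,v,|v|^2)$, the transport term drops out by the divergence theorem on $\mathbb{T}^3$ and $\mathbf{P}$ preserves these moments, so (using the hypothesis $\Pi(f_2+g)=0$ to rewrite the source contribution) the coefficients $a_i(t)$ of $\Pi f_2(t)$ solve $a_i'=-a_i$ with $a_i(0)=0$; therefore $\Pi f_2\equiv0$, hence $\Pi g\equiv0$ and $\Pi\big(\mathbf{P}g(s)\big)=\Pi g(s)=0$. Let $\mathbb{U}(t)$ be the semigroup generated by $-v\cdot\nabla_x-\mathbf{I}+\mathbf{P}$ acting on $\ker\Pi$; in the symmetrized variable $\mu^{-1/2}f$ this is the linearized BGK flow, for which the standard hypocoercivity estimate on the torus (the analogue of the Boltzmann statement in \cite{Guo-Briant}; see also \cite{Yun1,Bello}) gives
\[
\|\mathbb{U}(t)h_0\|_{L^2_{x,v}(\mu^{-1/2})}\le Ce^{-\sigma_1 t}\|h_0\|_{L^2_{x,v}(\mu^{-1/2})}
\]
for a fixed $\sigma_1\in(0,1)$, and I would \emph{define} $\eta_s$ to be a number strictly below $\sigma_1$, so that $\epsilon\in(1-\eta_s,1)$ forces $1-\epsilon<\sigma_1$. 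Since $f_2(t)=\int_0^t\mathbb{U}(t-s)\mathbf{P}g(s)\,ds$ by Duhamel and $\|\mathbf{P}g(s)\|_{L^2_{x,v}(\mu^{-1/2})}\le C_q\|g(s)\|_{L^{\infty,q}_{x,v}}\le C_q\delta\,e^{-(1-\epsilon)s}$ (using $q>5$ and $e_i\mu^{1/2}\in L^2_v$), the time convolution gives $\|f_2(t)\|_{L^2_{x,v}(\mu^{-1/2})}\le C_q\delta\,e^{-(1-\epsilon)t}$.

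Finally I would upgrade to $L^\infty_{x,v}(\mu^{-\zeta})$ decay by iterating the mild formulation once more: substituting the Duhamel formula for $f_2$ into the $\mathbf{P}f_2$ term and writing $\mathbf{P}f_2=\sum_i\langle f_2,e_i\rangle_v(e_i\mu)$, the resulting double velocity integral is handled by the change of variables $u\mapsto y=x-v(t-s)-u(s-s')$, splitting $\{|u|\le N\}$ and $\{|u|\ge N\}$ and counting multiplicities on $\mathbb{T}^3$ exactly as in the proof of Lemma \ref{Pfesti}; this turns the inner $dv$-integral into a spatial integral of $|\langle f_2(s'),e_j\rangle_v|$, which by Cauchy--Schwarz is $\le C\|f_2(s')\|_{L^2_{x,v}(\mu^{-1/2})}$ (no extra velocity cutoff needed, since $e_j\mu^{1/2}\in L^2_v$), while the large-$|u|$ piece is absorbed by taking $N$ large and using $\|f_2\|_{L^{\infty,q}_{x,v}}\le C_q\|f_2\|_{L^\infty_{x,v}(\mu^{-1/2})}$. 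Inserting the $L^2$ decay from the previous step, noting that the free part of $f_2$ vanishes and that the kernel $e^{-(t-s)}$ decays strictly faster than $e^{-(1-\epsilon)t}$, the time integral reproduces the rate $e^{-(1-\epsilon)t}$ (any stray factor of $t$ being absorbed at the cost of an arbitrarily small loss, which is why $\eta_s$ is taken below $\sigma_1$), and the Gaussian factor in $\mathbf{P}$ carries the weight $\mu^{-\zeta}$, $\zeta<1$, through as before, giving $\|f_2(t)\|_{L^\infty_{x,v}(\mu^{-\zeta})}\le C_q\delta\,e^{-(1-\epsilon)t}$.

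The main obstacle is this last step: one must rerun the double-Duhamel/change-of-variables machinery of Lemma \ref{Pfesti} with a \emph{time-dependent}, decaying $L^2$ bound in place of the fixed relative-entropy bound, tracking constants so that the time convolution does not degrade the exponent and so that the $L^\infty$ terms can genuinely be absorbed into the left side; and one must secure a hypocoercive $L^2$ decay rate $\sigma_1>0$ and arrange $\eta_s<\sigma_1$, which is exactly what the phrase ``$\eta_s$ will be determined in the proof'' refers to.
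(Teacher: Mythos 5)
Your proposal is correct in substance and uses the same two ingredients as the paper (the known $L^2_{x,v}(\mu^{-1/2})$ exponential decay of the linearized flow from \cite{Yun1,Yun3}, and the mild-form double-iteration that converts $L^2$ control into weighted $L^\infty$ control as in Lemma \ref{Pfesti}), but it packages them differently. The paper first proves a decay estimate for the semigroup $S(t)$ of the homogeneous problem $(\p_t+v\cdot\nabla_x+1)f=\mathbf{P}f$ in $L^\infty_{x,v}(\mu^{-\zeta})$, namely \eqref{St est}, by combining the mild-form iteration with the $L^2$ decay and invoking Proposition 5.4 of \cite{Guo-Briant}; it then splits $f_2=\Pi f_2+(I-\Pi)f_2$, bounds $\Pi f_2=-\Pi g$ directly from the hypothesis $\Pi(f_2+g)=0$, and treats $(I-\Pi)f_2$ by Duhamel, using the commutation of $I-\Pi$ with $S(t)$ (proved via $\Pi\mathbf{P}=\mathbf{P}\Pi$ and $[\Pi,v\cdot\nabla_x]=0$) so that the rate $e^{-\eta_s t}$ applies, and finally convolves with $e^{-(1-\epsilon)s}$ using $1-\epsilon<\eta_s$. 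You instead observe from the moment ODE $a_i'=b_i$ together with $a_i+b_i=0$ and $a_i(0)=0$ that $\Pi f_2\equiv\Pi g\equiv0$ (a correct and slightly stronger statement than the paper needs, and consistent with the application, since $\Pi f_1^{n+1}\equiv0$ there), then get the $L^2(\mu^{-1/2})$ decay of $f_2$ directly by Duhamel against the decaying source, and lift to $L^\infty_{x,v}(\mu^{-\zeta})$ by rerunning the double-Duhamel/change-of-variables argument on $f_2$ itself. What the paper's organization buys is that no self-referential absorption is needed: once \eqref{St est} is in hand, the estimate of $f_2$ is a pure convolution. What your organization buys is that you never need the semigroup statement, the commutation lemma, or the $\Pi/(I-\Pi)$ splitting. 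The one step you must carry out carefully is the absorption you flag yourself: the near-diagonal piece ($s-s'\leq\lambda$) and the large-velocity remainder come with the unknown $\sup_{s\leq t}e^{(1-\epsilon)s}\|f_2(s)\|_{L^\infty_{x,v}(\mu^{-\zeta})}$ on the right, so you need their coefficients (of size $C(1-e^{-\lambda})$ and $Ce^{-cN^2}$, uniform in time) to be made less than one, and you need this sup to be finite on each finite time interval before absorbing — which it is, since $f_2\in L^\infty_tL^\infty_{x,v}(\mu^{-\zeta})$ there; with that said, the rate bookkeeping $1-\epsilon<\eta_s<\sigma_1<1$ closes exactly as you describe, matching the paper's requirement $0<1-\epsilon<\eta_s$.
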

\begin{proof}
Let $S(t)$ be the semi-group so that $S(t)f_0$ solves the following equation:
\begin{equation}\label{linearf2}
 (\p_{t} + v\cdot\nabla_{x} +1)f = \mathbf{P}f,\qquad f(0) = f_0 ,\qquad \Pi f_0 = 0.
\end{equation}
We first consider the $L^{\infty}$ decay of $S(t)$. We write \eqref{linearf2} in the mild form
\begin{align*}
f(t,x,v) = e^{-t}f_0(x-vt,v) + \int_0^t e^{-(t-s)}\mathbf{P}f(s,x-v(t-s),v)ds.
\end{align*}
Multiplying $\mu^{-\zeta}$ and applying double iteration on $\mathbf{P}f$, we have
\begin{align*}
\|f(t)\|_{L^{\infty}_{x,v}(\mu^{-\zeta})} \lesssim  e^{-\frac{1}{2}t}\|f_0\|_{L^{\infty}_{x,v}(\mu^{-\zeta})} +C_{T_0} \int_0^t \|f(s)\|_{L^2_{x,v}(\mu^{-1/2})}ds, \quad \mbox{for} \quad t\in[0,T_0],
\end{align*}
where we used $\|\mathbf{P}f\|_{L^{\infty}_{x,v}(\mu^{-\zeta})} \leq C\|f\|_{L^{2}_{x,v}(\mu^{-1/2})} $. Recalling that there is $\eta$ such that $\|f(t)\|_{L^2_{x,v}(\mu^{-1/2})}\leq Ce^{-\eta t}\|f_0\|_{L^2_{x,v}(\mu^{-1/2})}$ (See \cite{Yun1,Yun3}), we have from Proposition 5.4 in \cite{Guo-Briant} that
\begin{align}\label{St est}
\|S(t)f_0\|_{L^{\infty}_{x,v}(\mu^{-\zeta})} \leq  e^{-\eta_s t}\|f_0\|_{L^{\infty}_{x,v}(\mu^{-\zeta})},
\end{align}
for $0<\eta_s<1$. Now, we consider the estimate of $f_2$ in \eqref{f2 eq}. From $\Pi(f_2+g)=0$, we can estimate $\Pi f_2$ as follows:
\begin{align}\label{Pipart}
\| \Pi f_2(t) \|_{L^{\infty}_{x,v}(\mu^{-\zeta})} &= \|\Pi g(t) \|_{L^{\infty}_{x,v}(\mu^{-\zeta})} \leq C_q\|g(t) \|_{L^{\infty,q}_{x,v}} \leq C_qe^{-(1-\epsilon)t}(\|f_0\|_{L^{\infty,q}_{x,v}} + \delta).
\end{align}
To estimate $(I-\Pi)f_2$, we rewrite $f_2$ by using the definition of the semi-group $S(t)$:
\begin{align*}
f_2 =\int_0^t  S(t-s)\mathbf{P}g ds.
\end{align*}
We claim that $I-\Pi$ commutes with the semi-group $S(t)$:
\begin{align}\label{claim2}
(I-\Pi)f_2 =\int_0^t  S(t-s)(I-\Pi)\mathbf{P}g ds.
\end{align}
With this claim assumed to be true, we apply \eqref{St est} and \eqref{Pipart} to obtain
\begin{align}\label{Piperppart}
\begin{split}
\| (I-\Pi)f_2(t) \|_{L^{\infty}_{x,v}(\mu^{-\zeta})} &\leq \int_0^t  e^{-\eta_s(t-s)}\| \mathbf{P}g(s) \|_{L^{\infty}_{x,v}(\mu^{-\zeta})} ds \cr
&\leq C_q\int_0^t  e^{-\eta_s(t-s)}e^{-(1-\epsilon)s}(\|f_0\|_{L^{\infty,q}_{x,v}} + \delta) ds \cr
&\leq \frac{C_q}{\eta_s-(1-\epsilon)}e^{-(1-\epsilon)t} (\|f_0\|_{L^{\infty,q}_{x,v}} + \delta), 
\end{split}
\end{align}
where we used $0<1-\epsilon<\eta_s$. Combining \eqref{Pipart} and \eqref{Piperppart} gives the desired result:
\begin{align*}
\| f_2(t) \|_{L^{\infty}_{x,v}(\mu^{-\zeta})} &\leq \| \Pi f_2(t) \|_{L^{\infty}_{x,v}(\mu^{-\zeta})} +\| (I-\Pi)f_2(t) \|_{L^{\infty}_{x,v}(\mu^{-\zeta})} \cr
&\leq C_q e^{-(1-\epsilon)t} (\|f_0\|_{L^{\infty,q}_{x,v}} + \delta).
\end{align*}
Now we go back to the proof of the claim \eqref{claim2}. We first prove that $\Pi$ commutes with $\mathbf{P}$:
\begin{align*}
\mathbf{P}\Pi f&=\mathbf{P}\left(\int_{\mathbb{T}^3\times\mathbb{R}^3}fdvdx \mu+\int_{\mathbb{T}^3\times\mathbb{R}^3}fvdvdx\cdot (v\mu) +\int_{\mathbb{T}^3\times\mathbb{R}^3}f\frac{|v|^2-3}{\sqrt{6}}dvdx\left(\frac{|v|^2-3}{\sqrt{6}}\mu\right)\right) \cr
&= \mathbf{P}(\mu)\int_{\mathbb{T}^3\times\mathbb{R}^3}fdvdx +\mathbf{P}(v\mu)\cdot  \int_{\mathbb{T}^3\times\mathbb{R}^3}fvdvdx +\mathbf{P}\left(\frac{|v|^2-3}{\sqrt{6}}\mu\right)\int_{\mathbb{T}^3\times\mathbb{R}^3}f\frac{|v|^2-3}{\sqrt{6}}dvdx \cr
&=\Pi \mathbf{P} f.
\end{align*}
So that $(I-\Pi)(I-\mathbf{P}) =(I-\mathbf{P})(I-\Pi) $. We can easily check that $\Pi$ also commutes with $v\cdot\nabla_x$ as in \cite{Guo-Briant}. Now, we apply $(I-\Pi)$ on \eqref{f2 eq} and use these commutation relations, we get
\begin{align*}
\partial_t (I-\Pi)f_2 + v\cdot\nabla_x (I-\Pi)f_2 = (P-I)(I-\Pi)f_2 + (I-\Pi)\mathbf{P}g.
\end{align*}
This completes the proof of the claim \eqref{claim2}.
\end{proof}

\begin{proof}[Proof of Proposition \ref{smallthm}]
We define the following iteration:
\begin{align}\label{f1f2}
\begin{split}
&\partial_t f_1^{n+1} + v\cdot\nabla_x f_1^{n+1} +  f_1^{n+1} = \Gamma(f_1^{n+1}+f_2^n),  \qquad f_1^{n+1}(0,x,v)=f_0(x,v), \cr
&\partial_t f_2^{n+1} + v\cdot\nabla_x f_2^{n+1} = (\mathbf{P}f_2^{n+1}-f_2^{n+1}) + \mathbf{P}f_1^{n+1},  \qquad f_2^{n+1}(0,x,v)=0,
\end{split}
\end{align}
for $n\geq0$, start with $f_1^0(t,x,v)=0$, and $f_2^0(t,x,v)=0$. The existence of a solution is guaranteed by Lemma \ref{f1lem} and \ref{f2lem} in the following manner: For the $g=f_2^n$ case, Lemma \ref{f1lem} implies that if $\|f_0\|_{L^{\infty,q}_{x,v}}\leq \delta$  and $\|f_2^n(t)\|_{L^{\infty,q}_{x,v}}\leq C_q\delta e^{-(1-\epsilon)t}$, then there exists solution $f_1^{n+1}$ of equation $\eqref{f1f2}_1$, and $f_1^{n+1}$ satisfies
\begin{align*}
\|f_1^{n+1}(t)\|_{L^{\infty,q}_{x,v}} \leq e^{-(1-\epsilon)t}(\|f_0\|_{L^{\infty,q}_{x,v}} + \delta).
\end{align*}
Similarly, for case $g=f_1^{n+1}$, Lemma \ref{f2lem} implies that if $\|f_1^{n+1}\|_{L_t^{\infty}L^{\infty,q}_{x,v}}\leq \infty$, then there exists a unique solution $f_2^{n+1}$ of $\eqref{f1f2}_2$ in $L_t^{\infty}L^{\infty}_{x,v}(\mu^{-\zeta})$.
Moreover, if $\Pi(f_2^{n+1}+f_1^{n+1})=0$ and $\|f_1^{n+1}(t)\|_{L^{\infty,q}_{x,v}}\leq e^{-(1-\epsilon)t}(\|f_0\|_{L^{\infty,q}_{x,v}} + \delta)$, then we have
\begin{align*}
\|f_2^{n+1}(t)\|_{L^{\infty}_{x,v}(\mu^{-\zeta})}\leq C_q\delta e^{-(1-\epsilon)t}.
\end{align*}
Since we started the iteration with $f_1^0(t,x,v)=0$, and $f_2^0(t,x,v)=0$, by induction, we obtain
\begin{align}\label{f1f2est}
\| f_1^n(t) \|_{L^{\infty,q}_{x,v}} &\leq (\|f_0\|_{L^{\infty,q}_{x,v}} + \delta)e^{-(1-\epsilon)t}, \qquad
\| f_2^n(t) \|_{L^{\infty}_{x,v}(\mu^{-\zeta})} \leq C_q\delta e^{-(1-\epsilon)t},
\end{align}
for all $n\geq 0 $. We note that as we proved in Lemma \ref{f1lem}, sequence $\{f_1^n\}_{n\geq 0}$ is the Cauchy sequence. Thus there exists $f_1\in L^{\infty,q}_{x,v}$ such that
\begin{align*}
f_1^n \rightarrow f_1, \qquad \mbox{with} \qquad \| f_1(t) \|_{L^{\infty,q}_{x,v}} &\leq (\|f_0\|_{L^{\infty,q}_{x,v}} + \delta)e^{-(1-\epsilon)t}.
\end{align*}
For sequence $\{f_2^n\}_{n\geq 0}$, we obtained uniform boundedness. Thus there exists a weak star converging subsequence $f_2$ such that
\begin{align*}
f_2^n \overset{\ast}{\rightharpoonup}  f_2, \qquad \mbox{with} \qquad \| f_2(t) \|_{L^{\infty,q}_{x,v}} &\leq C_q\delta e^{-(1-\epsilon)t}.
\end{align*}
Thanks to \eqref{f1f2est}, $f_1^n$ and $f_2^n$ are weakly compact in $L^1$, and 
for any finite time $T$, there exists a positive constant $C_T$ such that
\begin{align*}
\int_0^T\int_{\mathbb{T}^3}\int_{\mathbb{R}^3} |v|^3(\mu+f_1^{n+1}+f_2^n) dvdxdt < C_T.
\end{align*}
This third-moment estimate combined with velocity averaging lemma in \cite{Perthame}, we have the following strong compactness for the macroscopic fields
\begin{align*}
&\int_{\mathbb{R}^3} (\mu+f_1^{n+1}+f_2^n) dv := \rho^n \rightarrow \rho, \\
&\int_{\mathbb{R}^3} v(\mu+f_1^{n+1}+f_2^n) dv := \rho^nU^n \rightarrow \rho U, \\
& \int_{\mathbb{R}^3} |v|^2(\mu+f_1^{n+1}+f_2^n) dv := 3\rho^nT^n+\rho^n|U^n|^2 \rightarrow 3\rho T + \rho|U|^2.
\end{align*}
On the other hand, the weak compactness of $f_1^n$ and $f_2^n$ gives rise to the weak compactness of the local Maxwellian. The weak compactness of $\mathcal{M}(\mu+f_1^{n+1}+f_2^n)$ together with the strong compactness of $(\rho^n,U^n,T^n)$ yields the following desired weak convergence:
\begin{align*}
\mathcal{M}(\mu+f_1^{n+1}+f_2^n) \rightharpoonup \mathcal{M}(\mu+ f_1+f_2),
\end{align*}
in $L^1([0,T]\times \mathbb{T}^3)$. See \cite{Perthame,Yun3} for detailed arguments. This guarantees that $f_1+f_2$ is a solution of the system \eqref{f1f2}.
Moreover, the solution $f_1+f_2$ satisfies
\begin{align*}
\| (f_1+f_2)(t) \|_{L^{\infty,q}_{x,v}} &\leq C_q\delta e^{-(1-\epsilon)t}.
\end{align*}
To prove the conservation laws, we add the two equations in \eqref{f1f2}:
\begin{align*}
\partial_t (f_1^{n+1}+f_2^{n+1}) + v\cdot\nabla_x (f_1^{n+1}+f_2^{n+1}) +  (f_1^{n+1}+f_2^{n+1}) = \mathbf{P}(f_1^{n+1}+f_2^{n+1})+ \Gamma(f_1^{n+1}+f_2^n).
\end{align*}
We take $\Pi$ on both sides to have
\begin{align}\label{conservf1f2}
\partial_t \Pi(f_1^{n+1}+f_2^{n+1}) = \Pi \left(\Gamma(f_1^{n+1}+f_2^n)\right).
\end{align}
Now we show that $\Pi(\Gamma(f))=0$ for any function $f\in L^{\infty,q}_{x,v}$.
For any function $f\in L^{\infty,q}_{x,v}$ with $F=\mu+f$, the cancellation property of the BGK operator \eqref{cancellation} implies
\begin{align*}
\mathbf{P}(\nu (\mathcal{M}(F)-F)) =0.
\end{align*}
Applying the linearization of the BGK operator in Lemma \ref{linearize}, we have
\begin{align*}
0=\mathbf{P}(\nu(\mathcal{M}(F)-F))=\mathbf{P}((\mathbf{P}f-f) +\Gamma(f))=\mathbf{P}(\Gamma(f)),
\end{align*}
which guarantees $\Pi \left(\Gamma(f_1^{n+1}+f_2^n)\right)=0$. This combining with \eqref{conservf1f2}, we have
\begin{align*}
\Pi(f_1^{n+1}(t)+f_2^{n+1}(t)) =\Pi(f_1^{n+1}(0)+f_2^{n+1}(0)) = \Pi f_0 = 0.
\end{align*}
Therefore $f_1^n+f_2^n$ satisfies the conservation laws for all $n\geq0$. \newline
For stability and uniqueness, let $f$ and $\tilde{f}$ be solutions corresponding to the initial data $f_0$ and $\tilde{f}_0$, respectively. Subtracting the following two equations
\begin{align*}
\p_{t}f + v\cdot\nabla_x f  + f &= \mathbf{P}f + \Gamma(f), \cr
\p_{t}\tilde{f} + v\cdot\nabla_x \tilde{f}  + \tilde{f} &= \mathbf{P}\tilde{f} + \Gamma(\tilde{f}),
\end{align*}
yields
\begin{align*}
\p_{t}(f-\tilde{f}) + v\cdot\nabla_x (f-\tilde{f})  + f-\tilde{f} &= \mathbf{P}(f-\tilde{f}) + \Gamma(f)-\Gamma(\tilde{f}).
\end{align*}
Using the semi-group operator, we write
\begin{align*}
(f-\tilde{f})(t,x,v) = S(t)(f_0-\tilde{f}_0)(x-vt,v) +\int_0^t  S(t-s)(\Gamma(f)-\Gamma(\tilde{f}))(s,x-v(t-s),v)ds.
\end{align*}
Multiplying $e^{(1-\epsilon)t}\langle v \rangle^q$ on both sides and applying Lemma \ref{nonlinsubt}, we have
\begin{align*}
\sup_{s\in[0,t]}e^{(1-\epsilon)s}\|(f-\tilde{f})(s)\|_{L^{\infty,q}_{x,v}} &\leq  e^{-\epsilon t}\|f_0-\tilde{f}_0\|_{L^{\infty,q}_{x,v}} +\int_0^t e^{-\epsilon(t-s)} e^{(1-\epsilon)s}\|(\Gamma(f)-\Gamma(\tilde{f}))(s)\|_{L^{\infty,q}_{x,v}}ds \cr
&\leq \|f_0-\tilde{f}_0\|_{L^{\infty,q}_{x,v}} +\frac{1}{\epsilon}C_{\delta} \sup_{s\in[0,t]}e^{(1-\epsilon)s} \|(f-\tilde{f})(s)\|_{L^{\infty,q}_{x,v}}.
\end{align*}
For sufficiently small $C_{\delta}< \epsilon$, we obtain stabtility and uniqueness of the solution. For the non-negativity of the solution, we recover the equation for $F=\mu+f$:
\begin{align*}
\p_{t}F + v\cdot\nabla_x F +\nu F= \nu \mathcal{M}(F).
\end{align*}
Then the mild form of $F$ with non-negativity of $F_0$ and $\mathcal{M}(F)$ gives the non-negativty of the solution
\begin{align*}
F(t,x,v) = e^{-\int_0^t \nu(s,x)ds} F_0(x-vt,v) + \int_0^t e^{-\int_s^t \nu(\tau,x)d\tau}\mathcal{M}(F)(s,x-v(t-s),v)ds \geq 0.
\end{align*}
This completes the proof of Proposition \ref{smallthm}.
\end{proof}

\appendix
\section{Nonlinear part of the BGK operator}\label{Nonlincomp}
In this section, we give the explicit form of the nonlinear term of the BGK model. We compute the second derivative of the BGK operator and specify the polynomial form $\mathcal{P}_{ij}$ and the number $\alpha_{ij}$ and $\beta_{ij}$ satisfying
\begin{align*}
\left[\nabla_{(\rho_{\theta},\rho_{\theta} U_{\theta}, G_{\theta})}^2 \mathcal{M}(\theta)\right]_{ij} &= \frac{\mathcal{P}_{ij}((v-U_{\theta}),U_{\theta},T_{\theta})}{\rho_{\theta}^{\alpha_{ij}}T_{\theta}^{\beta_{ij}}}\mathcal{M}(\theta).
\end{align*}
Because $\mathcal{M}(\theta)$ depends on $(\rho_{\theta},U_{\theta},T_{\theta})$, we need to use the chain rule twice.
For brevity, we omit the $\theta$ dependency in this section. \\
{\bf The first derivative:} We first review the previous computations for the BGK operator:
\begin{lemma}\label{Jaco}\emph{\cite{Yun1}} When $\rho>0$, for the relation $(\rho,U,T)$ and $(\rho,\rho U,G)$ in \eqref{rhoUG}, we have
\begin{align*}
J = \frac{\partial(\rho,\rho U,G)} {\partial(\rho,U,T)}= \left[ {\begin{array}{cccccc}
1 & 0 & 0 & 0 & 0  \\
U_1 & \rho & 0 & 0 & 0 \\
U_2 & 0 & \rho & 0 & 0 \\
U_3 & 0 & 0 & \rho & 0 \\
\frac{3T+|U|^2-3}{\sqrt{6}} & \frac{2\rho U_1}{\sqrt{6}} & \frac{2\rho U_2}{\sqrt{6}} & \frac{2\rho U_3}{\sqrt{6}} & \frac{3\rho }{\sqrt{6}}
\end{array} } \right],
\end{align*}
and
\begin{align*}
J^{-1} = \left(\frac{\partial(\rho,\rho U,G)} {\partial(\rho,U,T)}\right)^{-1}= \left[ {\begin{array}{cccccc}
1 & 0 & 0 & 0 & 0  \\
-\frac{U_1}{\rho} & \frac{1}{\rho} & 0 & 0 & 0 \\
-\frac{U_2}{\rho} & 0 & \frac{1}{\rho} & 0 & 0 \\
-\frac{U_3}{\rho} & 0 & 0 & \frac{1}{\rho} & 0 \\
\frac{|U|^2-3T+3}{3\rho} & -\frac{2}{3}\frac{U_1}{\rho} & -\frac{2}{3}\frac{U_2}{\rho} & -\frac{2}{3}\frac{U_3}{\rho} & \sqrt{\frac{2}{3}}\frac{1}{\rho}
\end{array} } \right].
\end{align*}
The first derivative of the local Maxwellian with respect to the macroscopic fields gives
\begin{align*}
\nabla_{(\rho,U,T)} \mathcal{M}(F) = \left(\frac{1}{\rho},\frac{v-U}{T},\left(-\frac{3}{2}\frac{1}{T}+\frac{|v-U|^2}{2T^2}\right) \right)\mathcal{M}(F).
\end{align*}
\end{lemma}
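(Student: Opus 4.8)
The plan is to verify all three assertions of Lemma \ref{Jaco} by direct computation, since the statement is purely a bookkeeping record of the change of variables $(\rho,U,T)\leftrightarrow(\rho,\rho U,G)$ together with the first derivatives of the local Maxwellian; there is no genuine obstacle, only the need to keep the $\sqrt{6}$-factors and chain-rule signs consistent with the second-derivative computation done in the rest of Appendix \ref{Nonlincomp}.

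First I would compute the Jacobian $J$ of the map $(\rho,U_1,U_2,U_3,T)\mapsto(\rho,\rho U_1,\rho U_2,\rho U_3,G)$ entrywise. The first four output components are immediate: $\partial\rho/\partial\rho=1$ with all other partials of $\rho$ vanishing, and $\partial(\rho U_j)/\partial\rho=U_j$, $\partial(\rho U_j)/\partial U_i=\rho\,\delta_{ij}$, $\partial(\rho U_j)/\partial T=0$. For the last row I would use the identity $G=\frac{1}{\sqrt{6}}\big(\rho|U|^2+3\rho T-3\rho\big)$ from \eqref{rhoUG}, which gives $\partial G/\partial\rho=\frac{|U|^2+3T-3}{\sqrt{6}}$, $\partial G/\partial U_i=\frac{2\rho U_i}{\sqrt{6}}$, and $\partial G/\partial T=\frac{3\rho}{\sqrt{6}}$. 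This reproduces the stated matrix $J$.

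Next I would note that $J$ is lower triangular with diagonal $(1,\rho,\rho,\rho,3\rho/\sqrt{6})$, so $\det J=\frac{3}{\sqrt{6}}\rho^{4}\neq 0$ whenever $\rho>0$ and $J^{-1}$ exists. To exhibit $J^{-1}$ I would differentiate the explicit reverse relations $U_i=(\rho U_i)/\rho$ and $T=\sqrt{2/3}\,G/\rho-|\rho U|^2/(3\rho^2)+1$ with respect to the independent variables $(\rho,\rho U,G)$; for instance $\partial U_i/\partial\rho=-U_i/\rho$, $\partial U_i/\partial(\rho U_i)=1/\rho$, and, using $\sqrt{2/3}\,G/\rho=T+\frac{|U|^2}{3}-1$ to simplify, $\partial T/\partial\rho=\frac{|U|^2-3T+3}{3\rho}$, $\partial T/\partial(\rho U_i)=-\frac{2}{3}\frac{U_i}{\rho}$, $\partial T/\partial G=\sqrt{2/3}\,\frac{1}{\rho}$. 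This yields exactly the claimed $J^{-1}$, and as a consistency check I would confirm $JJ^{-1}=I$, the only mildly delicate point being the bottom row/column, e.g. $\frac{3\rho}{\sqrt{6}}\cdot\sqrt{2/3}\,\frac{1}{\rho}=1$.

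Finally, for $\nabla_{(\rho,U,T)}\mathcal{M}(F)$ I would differentiate logarithmically: since $\log\mathcal{M}(F)=\log\rho-\frac{3}{2}\log(2\pi T)-\frac{|v-U|^2}{2T}$, one gets $\partial_\rho\log\mathcal{M}=\frac{1}{\rho}$, $\nabla_U\log\mathcal{M}=\frac{v-U}{T}$ (from $-\frac{1}{2T}\nabla_U|v-U|^2=\frac{v-U}{T}$), and $\partial_T\log\mathcal{M}=-\frac{3}{2T}+\frac{|v-U|^2}{2T^2}$; multiplying through by $\mathcal{M}(F)$ gives the stated gradient. Assembling these three computations completes the proof of Lemma \ref{Jaco}.
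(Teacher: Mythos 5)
Your computation is correct: the entries of $J$, the inverse obtained by differentiating the reverse relations $U=\rho U/\rho$ and $T=\sqrt{2/3}\,G/\rho-|\rho U|^2/(3\rho^2)+1$, and the logarithmic differentiation of $\mathcal{M}(F)$ all check out, including the $\sqrt{6}$-factors and the simplification $\partial T/\partial\rho=\frac{|U|^2-3T+3}{3\rho}$. The paper itself gives no proof of this lemma (it is simply quoted from \cite{Yun1}), so your direct verification is exactly the argument being deferred to the reference, and nothing is missing.
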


Now we compute the first derivative of $\mathcal{M}(F)$ with respect to $(\rho,\rho U, G)$. Since the local Maxwellian $\mathcal{M}(F)$ depends on $(\rho,U,T)$, we should apply the following change of variable:
\begin{align}\label{firstderi}
\nabla_{(\rho,\rho U, G)} \mathcal{M}(F) &=\left(\frac{\partial(\rho,\rho U,G)} {\partial(\rho,U,T)}\right)^{-1}\nabla_{(\rho,U,T)}\mathcal{M}(F).
\end{align}
We denote the right-hand-side of \eqref{firstderi} as $R$. Applying Lemma \ref{Jaco}, we have
\begin{align}\label{firstderi2}
\begin{split}
R&=\left[ {\begin{array}{cccccc}
		1 & 0 & 0 & 0 & 0  \\
		-\frac{U_1}{\rho} & \frac{1}{\rho} & 0 & 0 & 0 \\
		-\frac{U_2}{\rho} & 0 & \frac{1}{\rho} & 0 & 0 \\
		-\frac{U_3}{\rho} & 0 & 0 & \frac{1}{\rho} & 0 \\
		\frac{|U|^2-3T+3}{3\rho} & -\frac{2}{3}\frac{U_1}{\rho} & -\frac{2}{3}\frac{U_2}{\rho} & -\frac{2}{3}\frac{U_3}{\rho} & \sqrt{\frac{2}{3}}\frac{1}{\rho}
\end{array} } \right]
\left[ {\begin{array}{c}
\frac{1}{\rho} \\ \frac{v_1-U_1}{T} \\ \frac{v_2-U_2}{T} \\ \frac{v_3-U_3}{T} \\ -\frac{3}{2}\frac{1}{T}+\frac{|v-U|^2}{2T^2}
\end{array} } \right]  \mathcal{M}(F) \cr
&= \left[ {\begin{array}{c}
\frac{1}{\rho} \\ -\frac{U_1}{\rho^2}+\frac{v_1-U_1}{\rho T} \\ -\frac{U_2}{\rho^2}+\frac{v_2-U_2}{\rho T} \\ -\frac{U_3}{\rho^2}+\frac{v_3-U_3}{\rho T} \\ \frac{|U|^2-3T+3}{3\rho^2}-\frac{2}{3}\frac{U \cdot (v-U)}{\rho T}+\sqrt{\frac{2}{3}}\frac{1}{\rho}\left(-\frac{3}{2}\frac{1}{T}+\frac{|v-U|^2}{2T^2}\right)
\end{array} } \right]  \mathcal{M}(F).
\end{split}
\end{align}
{\bf The second derivative:}
Taking $\nabla_{(\rho,\rho U, G)}$ on \eqref{firstderi}, we apply the change of variable $(\rho,\rho U, G)\rightarrow(\rho,U,T)$ once more:
\begin{align}\label{doublederi}
\nabla_{(\rho,\rho U, G)}^2 \mathcal{M}(F) &= \nabla_{(\rho,\rho U, G)} R = \left(\frac{\partial(\rho,\rho U,G)} {\partial(\rho,U,T)}\right)^{-1}\nabla_{(\rho,U,T)}R.
\end{align}
We first calculate $\nabla_{(\rho,U,T)}R$, which is a $5\times5$ matrix. The first column of $\nabla_{(\rho,U,T)}R$ is $(\rho,U,T)$ derivative of the first component of \eqref{firstderi2}, which is $\nabla_{(\rho,U,T)}\left(\frac{1}{\rho}\mathcal{M}(F)\right)$:
\begin{align*}
\nabla_{(\rho,U,T)}\left(\frac{1}{\rho}\mathcal{M}(F)\right) = \left[ {\begin{array}{c}-\frac{1}{\rho^2}\mathcal{M}+\frac{1}{\rho}\partial_{\rho}\mathcal{M} \cr \frac{1}{\rho}\partial_{U_1}\mathcal{M} \cr \frac{1}{\rho}\partial_{U_2}\mathcal{M} \cr \frac{1}{\rho}\partial_{U_3}\mathcal{M} \cr \frac{1}{\rho}\partial_{T}\mathcal{M} \end{array} } \right]  =
\left[ {\begin{array}{c} 0 \cr \frac{v_1-U_1}{\rho T} \cr
\frac{v_2-U_2}{\rho T}
\cr \frac{v_3-U_3}{\rho T}
\cr \left(-\frac{3}{2}\frac{1}{\rho T}+\frac{|v-U|^2}{2\rho T^2}\right) \end{array} } \right]\mathcal{M}(F).
\end{align*}
Similarly, we can compute the second to the fourth column as follows:
\begin{align*}
\nabla_{(\rho,U,T)}\left(\left(-\frac{U_1}{\rho^2}+\frac{v_1-U_1}{\rho T}\right)\mathcal{M}(F)\right) =
\left[ {\begin{array}{c}\frac{U_1}{\rho^3} \cr
-\left(\frac{1}{\rho^2}+\frac{1}{\rho T}\right) + \left(-\frac{U_1}{\rho^2}+\frac{v_1-U_1}{\rho T}\right) \frac{v_1-U_1}{T}  \cr
\left(-\frac{U_1}{\rho^2}+\frac{v_1-U_1}{\rho T}\right)\frac{v_2-U_2}{T} \cr
\left(-\frac{U_1}{\rho^2}+\frac{v_1-U_1}{\rho T}\right)\frac{v_3-U_3}{T} \cr
-\frac{v_1-U_1}{\rho T^2} + \left(-\frac{U_1}{\rho^2}+\frac{v_1-U_1}{\rho T}\right)\left(-\frac{3}{2}\frac{1}{T}+\frac{|v-U|^2}{2T^2}\right)\end{array} } \right]\mathcal{M}(F),
\end{align*}
and
\begin{align*}
\nabla_{(\rho,U,T)}\left(\left(-\frac{U_2}{\rho^2}+\frac{v_2-U_2}{\rho T}\right)\mathcal{M}(F)\right) =
\left[ {\begin{array}{c} \frac{U_2}{\rho^3} \cr
\left(-\frac{U_2}{\rho^2}+\frac{v_2-U_2}{\rho T}\right)\frac{v_1-U_1}{T} \cr
-\left(\frac{1}{\rho^2}+\frac{1}{\rho T}\right)+ \left(-\frac{U_2}{\rho^2}+\frac{v_2-U_2}{\rho T}\right)\frac{v_2-U_2}{T} \cr
\left(-\frac{U_2}{\rho^2}+\frac{v_2-U_2}{\rho T}\right)\frac{v_3-U_3}{T} \cr
-\frac{v_2-U_2}{\rho T^2} + \left(-\frac{U_2}{\rho^2}+\frac{v_2-U_2}{\rho T}\right)\left(-\frac{3}{2}\frac{1}{T}+\frac{|v-U|^2}{2T^2}\right)\end{array} } \right]\mathcal{M}(F),
\end{align*}
and
\begin{align*}
\nabla_{(\rho,U,T)}\left(\left(-\frac{U_3}{\rho^2}+\frac{v_3-U_3}{\rho T}\right)\mathcal{M}(F)\right) =
\left[ {\begin{array}{c}\frac{U_3}{\rho^3} \cr
\left(-\frac{U_3}{\rho^2}+\frac{v_3-U_3}{\rho T}\right)\frac{v_1-U_1}{T}  \cr
\left(-\frac{U_3}{\rho^2}+\frac{v_3-U_3}{\rho T}\right)\frac{v_2-U_2}{T} \cr
-\left(\frac{1}{\rho^2}+\frac{1}{\rho T}\right)+ \left(-\frac{U_3}{\rho^2}+\frac{v_3-U_3}{\rho T}\right)\frac{v_3-U_3}{T} \cr
-\frac{v_3-U_3}{\rho T^2} + \left(-\frac{U_3}{\rho^2}+\frac{v_3-U_3}{\rho T}\right)\left(-\frac{3}{2}\frac{1}{T}+\frac{|v-U|^2}{2T^2}\right)\end{array} } \right]\mathcal{M}(F).
\end{align*}
The fifth column of $\nabla_{(\rho,U,T)}R$ is equal to $\nabla_{(\rho,U,T)}\left( \left(\frac{|U|^2-3T+3}{3\rho^2}-\frac{2}{3}\frac{U \cdot (v-U)}{\rho T}+\sqrt{\frac{2}{3}}\frac{1}{\rho}\left(-\frac{3}{2}\frac{1}{T}+\frac{|v-U|^2}{2T^2}\right)\right) \mathcal{M}(F)\right)$, and which become
\begin{align*}
\left[ {\begin{array}{c} -\frac{|U|^2-3T+3}{3\rho^3} \cr
\frac{2U_1}{3\rho^2}-\frac{2}{3}\frac{v_1-2U_1}{\rho T}-\sqrt{\frac{2}{3}}\frac{1}{\rho}\left(\frac{v_1-U_1}{T^2}\right)  + \frac{v_1-U_1}{T}  \cr
\frac{2U_2}{3\rho^2}-\frac{2}{3}\frac{v_2-2U_2}{\rho T}-\sqrt{\frac{2}{3}}\frac{1}{\rho}\left(\frac{v_2-U_2}{T^2}\right)  + \frac{v_2-U_2}{T} \cr
\frac{2U_3}{3\rho^2}-\frac{2}{3}\frac{v_3-2U_3}{\rho T}-\sqrt{\frac{2}{3}}\frac{1}{\rho}\left(\frac{v_3-U_3}{T^2}\right)  + \frac{v_3-U_3}{T} \cr
-\frac{1}{\rho^2}+\frac{2}{3}\frac{U \cdot (v-U)}{\rho T^2}+\sqrt{\frac{2}{3}}\frac{1}{\rho}\left(\frac{3}{2}\frac{1}{T^2}-\frac{|v-U|^2}{T^3}\right)  +\left(-\frac{3}{2}\frac{1}{T}+\frac{|v-U|^2}{2T^2}\right)\end{array} } \right] \mathcal{M}(F).
\end{align*}
Note that we computed $\nabla_{(\rho,U,T)}R$ in \eqref{doublederi}. Now we should multiply the matrix $\nabla_{(\rho,U,T)}R$ by $\left(\frac{\partial(\rho,\rho U,G)} {\partial(\rho,U,T)}\right)^{-1}$.
It is a product of two $5\times 5$ matrices. We present the calculations for each row. \newline
$\bullet$ The first row of $\nabla_{(\rho,\rho U, G)}^2 \mathcal{M}(F)$:
\begin{align*}
\left[0  ~ \frac{U_1}{\rho^3} ~ \frac{U_2}{\rho^3} ~ \frac{U_3}{\rho^3} ~ -\frac{|U|^2-3T+3}{3\rho^3}\right]\mathcal{M}(F).
\end{align*}
$\bullet$ The second row of $\nabla_{(\rho,\rho U, G)}^2 \mathcal{M}(F)$:
\begin{align*}
\left[ {\begin{array}{c}
\frac{v_1-U_1}{\rho^2 T} \cr -\frac{U_1^2}{\rho^4}-\left(\frac{1}{\rho^3}+\frac{1}{\rho^2 T}\right) + \left(-\frac{U_1}{\rho^3}+\frac{v_1-U_1}{\rho^2 T}\right) \frac{v_1-U_1}{T} \cr -\frac{U_1U_2}{\rho^4} +\left(-\frac{U_2}{\rho^3}+\frac{v_2-U_2}{\rho^2 T}\right)\frac{v_1-U_1}{T} \cr -\frac{U_1U_3}{\rho^4}+\left(-\frac{U_3}{\rho^3}+\frac{v_3-U_3}{\rho^2 T}\right)\frac{v_1-U_1}{T} \cr
\frac{U_1}{\rho}\frac{|U|^2-3T+3}{3\rho^3}+\frac{1}{\rho}\left(
\frac{2U_1}{3\rho^2}-\frac{2}{3}\frac{v_1-2U_1}{\rho T}-\sqrt{\frac{2}{3}}\frac{1}{\rho}\left(\frac{v_1-U_1}{T^2}\right)  + \frac{v_1-U_1}{T}\right)
\end{array} } \right]^T \mathcal{M}(F).
\end{align*}
$\bullet$ The third row of $\nabla_{(\rho,\rho U, G)}^2 \mathcal{M}(F)$:
\begin{align*}
\left[ {\begin{array}{c}
\frac{v_2-U_2}{\rho^2 T} \cr
-\frac{U_1U_2}{\rho^4} +\left(-\frac{U_1}{\rho^3}+\frac{v_1-U_1}{\rho^2 T}\right)\frac{v_2-U_2}{T} \cr
-\frac{U_2^2}{\rho^4}-\left(\frac{1}{\rho^3}+\frac{1}{\rho^2 T}\right) + \left(-\frac{U_2}{\rho^3}+\frac{v_2-U_2}{\rho^2 T}\right) \frac{v_2-U_2}{T} \cr
-\frac{U_2U_3}{\rho^4}+\left(-\frac{U_3}{\rho^3}+\frac{v_3-U_3}{\rho^2 T}\right)\frac{v_2-U_2}{T} \cr
\frac{U_1}{\rho}\frac{|U|^2-3T+3}{3\rho^3}+\frac{1}{\rho}\left(\frac{2U_2}{3\rho^2}-\frac{2}{3}\frac{v_2-2U_2}{\rho T}-\sqrt{\frac{2}{3}}\frac{1}{\rho}\left(\frac{v_2-U_2}{T^2}\right)  + \frac{v_2-U_2}{T}\right)
\end{array} } \right]^T \mathcal{M}(F).
\end{align*}
$\bullet$ The fourth row of $\nabla_{(\rho,\rho U, G)}^2 \mathcal{M}(F)$:
\begin{align*}
\left[ {\begin{array}{c} \frac{v_3-U_3}{\rho^2 T} \cr
-\frac{U_1U_3}{\rho^4} +\left(-\frac{U_1}{\rho^3}+\frac{v_1-U_1}{\rho^2 T}\right)\frac{v_3-U_3}{T} \cr
-\frac{U_2U_3}{\rho^4}+\left(-\frac{U_2}{\rho^3}+\frac{v_2-U_2}{\rho^2 T}\right)\frac{v_3-U_3}{T} \cr
-\frac{U_3^2}{\rho^4}-\left(\frac{1}{\rho^3}+\frac{1}{\rho^2 T}\right) + \left(-\frac{U_3}{\rho^3}+\frac{v_3-U_3}{\rho^2 T}\right) \frac{v_3-U_3}{T} \cr
\frac{U_3}{\rho}\frac{|U|^2-3T+3}{3\rho^3}+\frac{1}{\rho}\left(\frac{2U_3}{3\rho^2}-\frac{2}{3}\frac{v_3-2U_3}{\rho T}-\sqrt{\frac{2}{3}}\frac{1}{\rho}\left(\frac{v_3-U_3}{T^2}\right)  + \frac{v_3-U_3}{T}\right)
\end{array} } \right]^T \mathcal{M}(F).
\end{align*}
$\bullet$ The fifth row of $\nabla_{(\rho,\rho U, G)}^2 \mathcal{M}(F)$:
Since the fifth row is too complicated, we present each component separately.
The first component of the fifth row, i.e. $[\nabla_{(\rho,\rho U, G)}^2 \mathcal{M}(F)]_{51}$:
\begin{align*}
\left(-\frac{2}{3}\frac{U}{\rho}\cdot\frac{v-U}{\rho T}
+\sqrt{\frac{2}{3}}\frac{1}{\rho}\left(-\frac{3}{2}\frac{1}{\rho T}+\frac{|v-U|^2}{2\rho T^2}\right)\right)\mathcal{M}(F).
\end{align*}
The second component of the fifth row, i.e. $[\nabla_{(\rho,\rho U, G)}^2 \mathcal{M}(F)]_{52}$:
\begin{align*}
\bigg[\frac{|U|^2-3T+3}{3\rho}\frac{U_1}{\rho^3} -\frac{2}{3}\frac{U}{\rho}\cdot\frac{v-U}{T}\left(\left(-\frac{U_1}{\rho^2}+\frac{v_1-U_1}{\rho T}\right) \right) -\frac{2}{3}\frac{U_1}{\rho}\left(-\left(\frac{1}{\rho^2}+\frac{1}{\rho T}\right)\right)  \cr
+\sqrt{\frac{2}{3}}\frac{1}{\rho}\left(-\frac{v_1-U_1}{\rho T^2} + \left(-\frac{U_1}{\rho^2}+\frac{v_1-U_1}{\rho T}\right)\left(-\frac{3}{2}\frac{1}{T}+\frac{|v-U|^2}{2T^2}\right)\right)\bigg]\mathcal{M}(F).
\end{align*}
The third component of the fifth row, i.e. $[\nabla_{(\rho,\rho U, G)}^2 \mathcal{M}(F)]_{53}$:
\begin{align*}
\bigg[\frac{|U|^2-3T+3}{3\rho}\frac{U_2}{\rho^3} -\frac{2}{3}\frac{U}{\rho}\cdot \frac{v-U}{T} \left(-\frac{U_2}{\rho^2}+\frac{v_2-U_2}{\rho T}\right)
-\frac{2}{3}\frac{U_2}{\rho}\left(-\left(\frac{1}{\rho^2}+\frac{1}{\rho T}\right)\right) \cr
+ \sqrt{\frac{2}{3}}\frac{1}{\rho}\left(-\frac{v_2-U_2}{\rho T^2} + \left(-\frac{U_2}{\rho^2}+\frac{v_2-U_2}{\rho T}\right)\left(-\frac{3}{2}\frac{1}{T}+\frac{|v-U|^2}{2T^2}\right)\right)\bigg]\mathcal{M}(F).
\end{align*}
The fourth component of the fifth row, i.e. $[\nabla_{(\rho,\rho U, G)}^2 \mathcal{M}(F)]_{54}$:
\begin{align*}
\bigg[\frac{|U|^2-3T+3}{3\rho}\frac{U_3}{\rho^3} -\frac{2}{3}\frac{U}{\rho}\cdot \frac{v-U}{T} \left(-\frac{U_3}{\rho^2}+\frac{v_3-U_3}{\rho T}\right)
-\frac{2}{3}\frac{U_3}{\rho}\left(-\left(\frac{1}{\rho^2}+\frac{1}{\rho T}\right)\right) \cr
+ \sqrt{\frac{2}{3}}\frac{1}{\rho}\left(-\frac{v_3-U_3}{\rho T^2} + \left(-\frac{U_3}{\rho^2}+\frac{v_3-U_3}{\rho T}\right)\left(-\frac{3}{2}\frac{1}{T}+\frac{|v-U|^2}{2T^2}\right)\right)\bigg]\mathcal{M}(F).
\end{align*}
The fifth component of the fifth row, i.e. $[\nabla_{(\rho,\rho U, G)}^2 \mathcal{M}(F)]_{55}$:
\begin{align*}
\bigg[\frac{|U|^2-3T+3}{3\rho}\left(-\frac{|U|^2-3T+3}{3\rho^3}\right)
-\frac{2}{3}\frac{U}{\rho}\cdot \left(\frac{2U}{3\rho^2}-\frac{2}{3}\frac{v-2U}{\rho T}-\sqrt{\frac{2}{3}}\frac{1}{\rho}\left(\frac{v-U}{T^2}\right)  + \frac{v-U}{T}\right)  \cr
+ \sqrt{\frac{2}{3}}\frac{1}{\rho}\left(-\frac{1}{\rho^2}+\frac{2}{3}\frac{U \cdot (v-U)}{\rho T^2}+\sqrt{\frac{2}{3}}\frac{1}{\rho}\left(\frac{3}{2}\frac{1}{T^2}-\frac{|v-U|^2}{T^3}\right)  +\left(-\frac{3}{2}\frac{1}{T}+\frac{|v-U|^2}{2T^2}\right)\right)\bigg]\mathcal{M}(F).
\end{align*}
We completed the calculation of $5\times5$ matrix $\nabla_{(\rho,\rho U, G)}^2 \mathcal{M}(F)$.  \\

\noindent {\bf Acknowledgement:}
G.-C. Bae is supported by the National Research Foundation of Korea(NRF) grant funded by the Korea government(MSIT) (No. 2021R1C1C2094843). G.-H. Ko and D.-H. Lee are supported by the National Research Foundation of Korea(NRF) grant funded by the Korean government(MSIT)(No. NRF-2019R1C1C1010915). S.-B. Yun is supported by Samsung Science and Technology Foundation under Project Number SSTF-BA1801-02.

\hide
\providecommand{\bysame}{\leavevmode\hbox to3em{\hrulefill}\thinspace}
\providecommand{\MR}{\relax\ifhmode\unskip\space\fi MR }
\providecommand{\MRhref}[2]{%
  \href{http://www.ams.org/mathscinet-getitem?mr=#1}{#2}
}
\providecommand{\href}[2]{#2}

\unhide


\begin{thebibliography}{10}

\bibitem{BGCY}
Gi-Chan Bae and Seok-Bae Yun, \emph{Stationary quantum {BGK} model for bosons
  and fermions in a bounded interval}, J. Stat. Phys. \textbf{178} (2020),
  no.~4, 845--868. \MR{4064205}

\bibitem{BY2023}
Gi-Chan Bae and Seok-Bae Yun \emph{The {S}hakhov model near a global {M}axwellian}, Nonlinear Anal.
  Real World Appl. \textbf{70} (2023), Paper No. 103742, 33. \MR{4498741}

\bibitem{Bang-Y}
Jeaheang Bang and Seok-Bae Yun, \emph{Stationary solutions for the ellipsoidal
  {BGK} model in a slab}, J. Differential Equations \textbf{261} (2016),
  no.~10, 5803--5828. \MR{3548271}

\bibitem{Bello}
A.~Bellouquid, \emph{Global existence and large-time behavior for {BGK} model
  for a gas with non-constant cross section}, Transport Theory Statist. Phys.
  \textbf{32} (2003), no.~2, 157--184. \MR{1969901}

\bibitem{BGK}
P.~L. Bhatnagar, E.~P. Gross, and M.~Krook, \emph{A model for collision
  processes in gases. i. small amplitude processes in charged and neutral
  one-component systems}, Phys. Rev. \textbf{94} (1954), 511--525.

\bibitem{Guo-Briant}
Marc Briant and Yan Guo, \emph{Asymptotic stability of the {B}oltzmann equation
  with {M}axwell boundary conditions}, J. Differential Equations \textbf{261}
  (2016), no.~12, 7000--7079. \MR{3562318}

\bibitem{Brull-Y}
Stephane Brull and Seok-Bae Yun, \emph{Stationary flows of the es-bgk model
  with the correct prandtl number}, arXiv preprint arXiv:2012.08490 (2020).

\bibitem{CaoJFA}
Chuqi Cao, \emph{Cutoff {B}oltzmann equation with polynomial perturbation near
  {M}axwellian}, J. Funct. Anal. \textbf{283} (2022), no.~9, Paper No. 109641,
  105. \MR{4458223}

\bibitem{CKLVPB}
Yunbai Cao, Chanwoo Kim, and Donghyun Lee, \emph{Global strong solutions of the
  {V}lasov-{P}oisson-{B}oltzmann system in bounded domains}, Arch. Ration.
  Mech. Anal. \textbf{233} (2019), no.~3, 1027--1130. \MR{3961294}

\bibitem{C}
Carlo Cercignani, \emph{The {B}oltzmann equation and its applications}, Applied
  Mathematical Sciences, vol.~67, Springer-Verlag, New York, 1988. \MR{1313028}

\bibitem{CIP}
Carlo Cercignani, Reinhard Illner, and Mario Pulvirenti, \emph{The mathematical
  theory of dilute gases}, Applied Mathematical Sciences, vol. 106,
  Springer-Verlag, New York, 1994. \MR{1307620}

\bibitem{CK1}
Hongxu Chen and Chanwoo Kim, \emph{Regularity of stationary {B}oltzmann
  equation in convex domains}, Arch. Ration. Mech. Anal. \textbf{244} (2022),
  no.~3, 1099--1222. \MR{4419612}

\bibitem{DMOS}
Jean Dolbeault, Peter Markowich, Dietmar Oelz, and Christian Schmeiser,
  \emph{Non linear diffusions as limit of kinetic equations with relaxation
  collision kernels}, Arch. Ration. Mech. Anal. \textbf{186} (2007), no.~1,
  133--158. \MR{2338354}

\bibitem{DHWY2017}
Renjun Duan, Feimin Huang, Yong Wang, and Tong Yang, \emph{Global
  well-posedness of the {B}oltzmann equation with large amplitude initial
  data}, Arch. Ration. Mech. Anal. \textbf{225} (2017), no.~1, 375--424.
  \MR{3634029}

\bibitem{DKL2020}
Renjun Duan, Gyounghun Ko, and Donghyun Lee, \emph{The boltzmann equation with
  large-amplitude initial data and specular reflection boundary condition},
  arXiv preprint arXiv:2011.01503 (2020).

\bibitem{DW2019}
Renjun Duan and Yong Wang, \emph{The {B}oltzmann equation with large-amplitude
  initial data in bounded domains}, Adv. Math. \textbf{343} (2019), 36--109.
  \MR{3880824}

\bibitem{DWY2017}
Renjun Duan, Yong Wang, and Tong Tang, \emph{Global existence for the ellipsoidal
  bgk model with initial large oscillations}, SCIENTIA SINICA Mathematica
  \textbf{47} (2017), no.~10, 1143--1154.

\bibitem{F-J}
Francis Filbet and Shi Jin, \emph{An asymptotic preserving scheme for the
  {ES}-{BGK} model of the {B}oltzmann equation}, J. Sci. Comput. \textbf{46}
  (2011), no.~2, 204--224. \MR{2753243}

\bibitem{GKTT2}
Y.~Guo, C.~Kim, D.~Tonon, and A.~Trescases, \emph{B{V}-regularity of the
  {B}oltzmann equation in non-convex domains}, Arch. Ration. Mech. Anal.
  \textbf{220} (2016), no.~3, 1045--1093. \MR{3466841}

\bibitem{GuoQAM}
Yan Guo, \emph{Bounded solutions for the {B}oltzmann equation}, Quart. Appl.
  Math. \textbf{68} (2010), no.~1, 143--148. \MR{2598886}

\bibitem{Guo10}
Yan Guo, \emph{Decay and continuity of the {B}oltzmann equation in bounded
  domains}, Arch. Ration. Mech. Anal. \textbf{197} (2010), no.~3, 713--809.
  \MR{2679358}

\bibitem{GKTT2017}
Yan Guo, Chanwoo Kim, Daniela Tonon, and Ariane Trescases, \emph{Regularity of
  the {B}oltzmann equation in convex domains}, Invent. Math. \textbf{207}
  (2017), no.~1, 115--290. \MR{3592757}

\bibitem{HY}
Byung-Hoon Hwang and Seok-Bae Yun, \emph{Stationary solutions to the boundary
  value problem for the relativistic {BGK} model in a slab}, Kinet. Relat.
  Models \textbf{12} (2019), no.~4, 749--764. \MR{3984749}

\bibitem{Issau}
D.~Issautier, \emph{Convergence of a weighted particle method for solving the
  {B}oltzmann ({B}.{G}.{K}.) equation}, SIAM J. Numer. Anal. \textbf{33}
  (1996), no.~6, 2099--2119. \MR{1427455}

\bibitem{Kim2011}
Chanwoo Kim, \emph{Formation and propagation of discontinuity for {B}oltzmann
  equation in non-convex domains}, Comm. Math. Phys. \textbf{308} (2011),
  no.~3, 641--701. \MR{2855537}

\bibitem{KimLee}
Chanwoo Kim and Donghyun Lee, \emph{The {B}oltzmann equation with specular
  boundary condition in convex domains}, Comm. Pure Appl. Math. \textbf{71}
  (2018), no.~3, 411--504. \MR{3762275}

\bibitem{cylinder}
Chanwoo Kim and Donghyun Lee, \emph{Decay of the {B}oltzmann equation with the specular boundary
  condition in non-convex cylindrical domains}, Arch. Ration. Mech. Anal.
  \textbf{230} (2018), no.~1, 49--123. \MR{3840911}

\bibitem{KLKRM}
Gyounghun Ko and Donghyun Lee, \emph{On $ c^{2} $ solution of the
  free-transport equation in a disk}, Kinetic and Related Models \textbf{16}
  (2023), no.~3, 311--372.

\bibitem{KLP2022}
Gyounghun Ko, Donghyun Lee, and Kwanghyuk Park, \emph{The large amplitude
  solution of the {B}oltzmann equation with soft potential}, J. Differential
  Equations \textbf{307} (2022), 297--347. \MR{4337824}

\bibitem{Mellet}
Antoine Mellet, \emph{Fractional diffusion limit for collisional kinetic
  equations: a moments method}, Indiana Univ. Math. J. \textbf{59} (2010),
  no.~4, 1333--1360. \MR{2815035}

\bibitem{M-M-M}
Antoine Mellet, St\'{e}phane Mischler, and Cl\'{e}ment Mouhot, \emph{Fractional
  diffusion limit for collisional kinetic equations}, Arch. Ration. Mech. Anal.
  \textbf{199} (2011), no.~2, 493--525. \MR{2763032}

\bibitem{M}
L.~Mieussens, \emph{Convergence of a discrete-velocity model for the
  {B}oltzmann-{BGK} equation}, Comput. Math. Appl. \textbf{41} (2001), no.~1-2,
  83--96. \MR{1808507}

\bibitem{M-S}
Luc Mieussens and Henning Struchtrup, \emph{Numerical comparison of
  bhatnagar--gross--krook models with proper prandtl number}, Physics of Fluids
  \textbf{16} (2004), no.~8, 2797--2813.

\bibitem{Nouri}
Anne Nouri, \emph{An existence result for a quantum {BGK} model}, Math. Comput.
  Modelling \textbf{47} (2008), no.~3-4, 515--529. \MR{2378854}

\bibitem{Perthame}
B.~Perthame, \emph{Global existence to the {BGK} model of {B}oltzmann
  equation}, J. Differential Equations \textbf{82} (1989), no.~1, 191--205.
  \MR{1023307}

\bibitem{P-P}
B.~Perthame and M.~Pulvirenti, \emph{Weighted {$L^\infty$} bounds and
  uniqueness for the {B}oltzmann {BGK} model}, Arch. Rational Mech. Anal.
  \textbf{125} (1993), no.~3, 289--295. \MR{1245074}

\bibitem{F-R}
Giovanni Russo and Francis Filbet, \emph{Semilagrangian schemes applied to
  moving boundary problems for the {BGK} model of rarefied gas dynamics},
  Kinet. Relat. Models \textbf{2} (2009), no.~1, 231--250. \MR{2472158}

\bibitem{RSY}
Giovanni Russo, Pietro Santagati, and Seok-Bae Yun, \emph{Convergence of a
  semi-{L}agrangian scheme for the {BGK} model of the {B}oltzmann equation},
  SIAM J. Numer. Anal. \textbf{50} (2012), no.~3, 1111--1135. \MR{2970736}

\bibitem{RY}
Giovanni Russo and Seok-Bae Yun, \emph{Convergence of a semi-{L}agrangian
  scheme for the ellipsoidal {BGK} model of the {B}oltzmann equation}, SIAM J.
  Numer. Anal. \textbf{56} (2018), no.~6, 3580--3610. \MR{3892427}

\bibitem{SR2}
Laure Saint-Raymond, \emph{Discrete time {N}avier-{S}tokes limit for the {BGK}
  {B}oltzmann equation}, Comm. Partial Differential Equations \textbf{27}
  (2002), no.~1-2, 149--184. \MR{1886958}

\bibitem{SR1}
Laure Saint-Raymond, \emph{From the {BGK} model to the {N}avier-{S}tokes equations}, Ann.
  Sci. \'{E}cole Norm. Sup. (4) \textbf{36} (2003), no.~2, 271--317.
  \MR{1980313}

\bibitem{Sone}
Yoshio Sone, \emph{Kinetic theory and fluid dynamics}, Modeling and Simulation
  in Science, Engineering and Technology, Birkh\"{a}user Boston, Inc., Boston,
  MA, 2002. \MR{1919070}

\bibitem{Sone2}
Yoshio Sone, \emph{Molecular gas dynamics}, Modeling and Simulation in Science,
  Engineering and Technology, Birkh\"{a}user Boston, Inc., Boston, MA, 2007,
  Theory, techniques, and applications. \MR{2274674}

\bibitem{Ukai}
Seiji Ukai, \emph{Stationary solutions of the {BGK} model equation on a finite
  interval with large boundary data}, Proceedings of the {F}ourth
  {I}nternational {W}orkshop on {M}athematical {A}spects of {F}luid and
  {P}lasma {D}ynamics ({K}yoto, 1991), vol.~21, 1992, pp.~487--500.
  \MR{1194459}

\bibitem{Wal}
Pierre Welander, \emph{On the temperature jump in a rarefied gas}, Ark. Fys.
  \textbf{7} (1954), 507--553. \MR{62041}

\bibitem{Yun1}
Seok-Bae Yun, \emph{Cauchy problem for the {B}oltzmann-{BGK} model near a
  global {M}axwellian}, J. Math. Phys. \textbf{51} (2010), no.~12, 123514, 24.
  \MR{2779616}

\bibitem{Yun2}
Seok-Bae Yun, \emph{Classical solutions for the ellipsoidal {BGK} model with fixed
  collision frequency}, J. Differential Equations \textbf{259} (2015), no.~11,
  6009--6037. \MR{3397316}

\bibitem{Yun3}
Seok-Bae Yun, \emph{Ellipsoidal {BGK} model near a global {M}axwellian}, SIAM J.
  Math. Anal. \textbf{47} (2015), no.~3, 2324--2354. \MR{3357626}

\bibitem{Yun22}
Seok-Bae Yun, \emph{Ellipsoidal {BGK} model for polyatomic molecules near
  {M}axwellians: a dichotomy in the dissipation estimate}, J. Differential
  Equations \textbf{266} (2019), no.~9, 5566--5614. \MR{3912760}

\bibitem{Zhang}
Xianwen Zhang, \emph{On the {C}auchy problem of the {V}lasov-{P}oisson-{BGK}
  system: global existence of weak solutions}, J. Stat. Phys. \textbf{141}
  (2010), no.~3, 566--588. \MR{2728846}

\bibitem{Z-H}
Xianwen Zhang and Shigeng Hu, \emph{{$L^p$} solutions to the {C}auchy problem
  of the {BGK} equation}, J. Math. Phys. \textbf{48} (2007), no.~11, 113304,
  17. \MR{2370243}

\end{thebibliography}
\end{document}